\documentclass{amsart}

\usepackage{amsmath}
\usepackage{amssymb}
\usepackage{amscd}

\usepackage[all]{xy}
\usepackage{url}
\usepackage[shortlabels]{enumitem}
\usepackage{tensor}
\usepackage{xcolor}

\usepackage{hyperref}

\numberwithin{equation}{section}

\theoremstyle{plain}
\newtheorem{theorem}{Theorem}[section]
\newtheorem{corollary}[theorem]{Corollary}
\newtheorem{lemma}[theorem]{Lemma}
\newtheorem{proposition}[theorem]{Proposition}

\theoremstyle{definition}
\newtheorem{definition}[theorem]{Definition}

\newtheorem{cond}[theorem]{Condition}

\theoremstyle{remark}
\newtheorem{remark}[theorem]{Remark}

\theoremstyle{definition}

\setlist[enumerate,1]{label=(\roman*)}
\setlist[description]{font=\normalfont\emph}

\newcommand{\ZZ}{\mathbb{Z}}
\newcommand{\QQ}{\mathbb{Q}}

\newcommand{\RR}{\mathbb{R}}

\newcommand{\PP}{\mathbb{P}}

\renewcommand{\AA}{\mathbb{A}}

\DeclareMathOperator{\GL}{GL}
\DeclareMathOperator{\PGL}{PGL}

\DeclareMathOperator{\Gal}{Gal}
\DeclareMathOperator{\Spec}{Spec}

\DeclareMathOperator{\Proj}{Proj}

\DeclareMathOperator{\Aut}{Aut}

\newcommand{\m}{\mathfrak{m}}
\newcommand{\p}{\mathfrak{p}}

\newcommand{\OO}{\mathcal{O}}

\newcommand{\X}{\mathcal{X}}

\newcommand{\U}{\mathcal{U}}

\newcommand{\Sc}{\mathcal{S}}
\renewcommand{\P}{\mathcal{P}}

\newcommand{\Xb}{\bar{X}}

\newcommand{\Ub}{\bar{U}}

\newcommand{\rig}{\mathrm{rig}}

\newcommand{\et}{\mathrm{et}}

\newcommand{\inj}{\hookrightarrow}

\newcommand{\iso}{\stackrel{\sim}{\to}}

\newcommand{\lexp}[2]{\tensor[^#1]{#2}{}}

\begin{document}

\title{Explicit local stable resolution of cusps}

\author{Stefan Wewers}
\address{Institute for Algebra and Number Theory, Ulm University}
\email{stefan.wewers@uni-ulm.de}

\subjclass[2020]{14G20 (Primary), 11S15, 14H20, 14E15, 14B05, 14H50, 14Q25 (Secondary)}

\keywords{Stable reduction, local fields, discretely valued fields, plane curve singularities,
	cusps, weighted blow-ups, Weierstrass cubics, plane quartics, semistable models}

\maketitle

\begin{abstract}
	This article is a companion to \cite{SSW}. We complete the local step left open
	there by giving an explicit stable resolution of cusps on GIT-stable plane
	quartic models. More generally, for a smoothing of an ordinary plane cusp over a
	complete discretely valued field, we show that after a finite separable extension
	and a suitable choice of coordinates, a single weighted blow-up with weights
	\((1,2,3)\) gives the stable resolution. The exceptional component is an explicit
	semistable Weierstrass cubic. The construction is effective and is implemented
	in the Sage package \texttt{StabilityFunction}.
\end{abstract}

\section{Introduction} \label{sec:intro}

This article is a complement to \cite{SSW}. In that paper, the semistable
reduction of smooth plane quartics with non-hyperelliptic stable reduction is
reduced to two steps: first one computes a GIT-stable plane model, and then one
resolves the cusps of its special fiber. The first step is treated in \cite{SternWewers} and \cite{SSW};
the present paper gives the explicit local construction needed for the second
step.

More precisely, let \(\X_0\) be a plane curve model over a complete discretely
valued field \(K\), and suppose that a point \(P\) on the special fiber is a cusp.
We construct, after a finite separable extension of \(K\), a weighted blow-up
which is the stable resolution of \(\X_0\) at \(P\). The exceptional component is
an explicit semistable Weierstrass cubic and is the \(1\)-tail which appears in
the stable reduction of the global quartic.

Although the main application in this paper is to the cusps appearing in
\cite{SSW}, we formulate the problem for a general smoothing of a plane curve
singularity. The point is that the stable resolution can be constructed locally:
the exceptional component and the required field extension are determined
directly from the local equation of the smoothing, without first constructing a
global semistable model. We expect this local approach to extend to other plane
curve singularities.

\subsection{Stable resolution of a plane curve singularity}
\label{subsec:stable_resolution}

Let \(K\) be complete with respect to a discrete valuation \(v_K\), with valuation
ring \(\OO_K\) and algebraically closed residue field \(k\). Let
\[
    \X_0=V(f)\subset \AA^2_{\OO_K}
\]
be an integral affine plane curve model, with \(f\in\OO_K[x,y]\) primitive. Let
\(P\in \X_{0,s}\) be a closed point on the special fiber. We assume that \(\X_0\) is normal in a
neighborhood of \(P\), and smooth over \(\OO_K\) away from \(P\) in that
neighborhood. The above assumptions are preserved after finite extension of \(K\). 

A stable resolution of \(\X_0\) at \(P\) is a modification
\[
\X\longrightarrow \X_0
\]
which is an isomorphism away from \(P\), whose special fiber is obtained by
replacing \(P\) by an exceptional divisor \(E\), such that the strict transform of
\(\X_{0,s}\) meets \(E\) transversely in smooth points
\(\tilde P_1,\ldots,\tilde P_r\), and such that
\[
(E,\{\tilde P_1,\ldots,\tilde P_r\})
\]
is stably marked, in the sense of \cite[Definition 1.1]{SSW}. We call this marked curve the {\em tail} of the stable resolution. See also \cite[Definition 3.1]{hasset2000}.

The stable resolution, if it exists, is unique up to unique isomorphism. Moreover,
after a finite separable extension of \(K\), it exists. This follows from the
stable modification theorem of Temkin (\cite{Temkin2010}), or from the classical semistable reduction
theorem applied locally, e.g.\ as in \cite[\S 2]{liu2006stable}; see also \cite[\S 3.3]{SSW}. If \(L/K\) is chosen Galois and the stable resolution is defined over \(L\), uniqueness gives a monodromy action of \(\operatorname{Gal}(L/K)\) on the tail. The fixed field of the kernel is the minimal extension over which the stable resolution is defined; in particular, for the minimal extension the monodromy action is faithful.

\medskip
The general local problem is to make this stable resolution explicit: construct a
finite extension of \(K\), describe the tail, and give enough coordinate data to
determine the monodromy action. The present paper solves this
problem in the simplest nontrivial case, namely if the singularity is a cusp.

\subsection{The cusp case}\label{subsec:cusp_case}

We now assume that \(P\in \X_{0,s}\) is a cusp, i.e.
\[
\widehat{\OO}_{\X_{0,s},P}\cong k[[x,y]]/(y^2-x^3).
\]
After choosing suitable affine coordinates centered at \(P\), the special fiber
has the form
\begin{equation} \label{eq:cusp_normal_form}
\bar f(x,y)=y^2-x^3+\text{terms of \((2,3)\)-weighted degree \(>6\)}.
\end{equation}
We call such coordinates admissible. For admissible coordinates set
\[
t_{\max}(x,y)
:=
\min_{2i+3j<6}
\frac{v_K(a_{i,j})}{6-2i-3j},
\qquad \text{where}\;\;
f=\sum a_{i,j}x^iy^j.
\]
After a finite extension, choose \(\Pi\in K\) with
\(v_K(\Pi)=t_{\max}(x,y)\), and consider the weighted blow-up of
\((\Pi,x,y)\) with weights \(1,2,3\).

Our first main result is the following theorem. The proof we give is completely independent of the usual Stable Reduction Theorem of Deligne and Mumford.

\begin{theorem}
	\label{thm:cusp_resolution}
	Let \((\mathcal X_0,P)\) be as above. After
	replacing \(K\) by a finite separable extension, there exists an admissible
	coordinate system \((x,y)\) such that the weighted blow-up of
	\((\Pi,x,y)\), with \(v_K(\Pi)=t_{\max}(x,y)\), is a stable resolution of
	\(\mathcal X_0\) at \(P\).
	
	More precisely, the exceptional divisor is an irreducible semistable curve
	\(\bar X_1\) of arithmetic genus one, and it meets the strict transform
	\(\bar X_0\) in one ordinary node. The thickness of this node is
	\(t_{\max}(x,y)\).
\end{theorem}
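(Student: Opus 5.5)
We analyze the weighted blow-up chart by chart, and choose the coordinates by an explicit normalization procedure.

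\textbf{Step 1: the chart containing the exceptional divisor.} Fix admissible coordinates and, after a finite extension, an element \(\Pi\) with \(v_K(\Pi)=t:=t_{\max}(x,y)\). In the \(\Pi\)-chart of the \((1,2,3)\)-weighted blow-up we substitute
\[
x=\Pi^2 x_1,\qquad y=\Pi^3 y_1 ,
\]
and put \(f_1:=\Pi^{-6}f(\Pi^2x_1,\Pi^3y_1)\). A monomial \(a_{ij}x^iy^j\) turns into \(a_{ij}\Pi^{2i+3j-6}x_1^iy_1^j\). For \(2i+3j<6\) it is integral by the definition of \(t_{\max}\). For \(2i+3j\ge 6\) it is integral because the leading terms of \(\bar f\) have weight \(\ge 6\); some care is needed here, since the coefficients \(a_{ij}\) with \(2i+3j>6\) may only be integral, which is harmless. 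Reducing modulo \(\Pi\) gives the exceptional curve
\[
\bar f_1 = y_1^2 - x_1^3 + \sum_{2i+3j<6}\overline{\Pi^{2i+3j-6}a_{ij}}\,x_1^iy_1^j .
\]
The monomials with \(2i+3j<6\) are \(1,x,x^2,y\). Hence \(\bar f_1\) is a Weierstrass cubic \(y_1^2+c\,y_1=x_1^3+a x_1^2+b x_1+d\). Its projective closure in \(\PP(1,2,3)\) is an irreducible curve of arithmetic genus one. By the definition of \(t_{\max}\), at least one of the lower coefficients is nonzero.

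\textbf{Step 2: choosing coordinates so the cubic is semistable.} The danger is that \(\bar f_1\) may be a cuspidal cubic, for instance when only \(d\ne 0\) in characteristic \(3\), or after a translation it becomes \(y_1^2=x_1^3\). In that case \(t_{\max}\) is not yet maximal. The remedy is to change coordinates by \(x\mapsto x-\alpha\), \(y\mapsto y-\beta-\gamma x\) with \(\alpha,\beta,\gamma\in\mathfrak m_L\) of valuations \(2t,3t,t\). These are chosen so that the lower-order coefficients are killed, and \(t_{\max}\) then strictly increases. I would define \(t_{\max}\) over all admissible coordinate systems and show:
\begin{enumerate}
\item the values of \(t_{\max}\) are bounded, because \(\X_0\) is normal at \(P\) and \(f\) is not equivalent to something with a genuinely singular generic fibre;
\item the values lie in a discrete set after bounding the ramification, since they are rational numbers with denominators dividing \(6\cdot e\).
\end{enumerate}
Hence a maximum is attained, after a finite separable extension needed to solve for \(\alpha,\beta,\gamma\). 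The solving involves roots of polynomials that are separable by a careful choice, e.g. using Artin–Schreier-type equations in characteristic \(2,3\). At the maximum, \(\bar f_1\) has at worst a node. Indeed, if the cubic had a cusp at a point \((\bar\alpha_1,\bar\beta_1)\), the coordinate change by lifts \(\Pi^2\alpha_1,\Pi^3\beta_1\), with a suitable \(\gamma\), would increase \(t_{\max}\). This is the main obstacle: the explicit case analysis, especially in residue characteristics \(2\) and \(3\), where completing the square or cube is unavailable and one must argue with the discriminant and the invariant \(c_4\) instead.

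\textbf{Step 3: the other charts and the node.} In the \(y\)-chart, and at the point at infinity of \(\bar X_1\) in the \(x\)-chart, we write \(\Pi = u\,x^{1/2}\ldots\). Concretely, in the \(x\)-chart \(x=x'^2\)-type weighted coordinates give a local equation of the form \(\Pi\cdot w = x'^{?}\) near the intersection point. Using \(\bar f=y^2-x^3+\dots\), the strict transform \(\bar X_0\) meets \(\bar X_1\) only at the point at infinity \(\infty\) of the Weierstrass cubic. At that point the model is étale-locally \(uv=\Pi\), so the node has thickness \(v_K(\Pi)=t\). One must check that the cyclic quotient singularities of \(\PP(1,2,3)\) do not lie on the total space. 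This holds because the orbifold points \([0{:}1{:}0]\) and \([0{:}0{:}1]\) are avoided: \(y_1^2-x_1^3\) does not vanish at them in the sense that the curve passes through the smooth point of weight-\(6\) type. Away from \(P\) the blow-up is an isomorphism.

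Finally, \((\bar X_1,\{\infty\})\) is a genus-one curve with one smooth marked point that is at worst nodal, hence stably marked. By uniqueness of stable resolutions this is the stable resolution, and the node has thickness \(t_{\max}(x,y)\).
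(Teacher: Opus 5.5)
\textbf{There is a genuine gap in Step 2}, at the point where you conclude that $t_{\max}$ attains a maximum. The argument ``bounded values in a discrete set'' only works over a fixed field. Optimality, and your own improvement procedure, range over all finite extensions of $K$. Each improvement step is carried out over $K(\Pi)$, with $v(\Pi)$ equal to the current value of $t_{\max}$. Over a field with value group $\Gamma$, the value of $t_{\max}$ is only known to lie in $\frac{1}{12}\Gamma$, since the weights $6-2i-3j$ are $6,4,3,2,1$. So the denominators may grow at every step, and ``after bounding the ramification'' presupposes what has to be shown.

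Consequently nothing in your argument excludes a strictly increasing sequence $t_1<t_2<\cdots$ converging to a supremum that is never attained. Passing to a limit does not help either: the corrections $h_n$ lie in $U_{t_n}$ with $t_n$ bounded, so they need not tend to $1$, and the iterates need not converge. Your boundedness claim (i) is likewise only heuristic, since finiteness of $t_{\max}$ at each individual coordinate system gives no uniform bound.

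The paper supplies the missing analytic input. It first reduces to strict unipotent changes (Lemmas~\ref{lem:tmax-invariant-inessential} and~\ref{lem:admissible-slice}). It then regards $g\mapsto t_{\max}(g)$ as a function on the affinoid $V\subset U^{\rig}$ and shows $t_{\max}<\infty$ on $V$, using smoothness of $D(h)\setminus\{P\}$ (Lemma~\ref{lem:t_max_admissible}). Finally it applies the nonarchimedean maximum principle \cite[Lemma 5.1]{SternWewers} to obtain a rigid point $g_{\max}$ at which the maximum is attained. Only then does your correct observation finish the proof: a cuspidal exceptional cubic can be removed by a change in $U_t$, which strictly increases $t_{\max}$. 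This is exactly Lemma~\ref{lem:optimal_iff_resolution}.

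\textbf{Smaller problems.}
\begin{enumerate}
\item In Step 1 you list the monomials of weight $<6$ as $1,x,x^2,y$ and forget $xy$, which has weight $5$. The exceptional curve is a full Weierstrass cubic with a term $\bar a_1x_1y_1$. This term is indispensable in characteristic $2$: without it the cubic is smooth or cuspidal, never nodal.
\item You should reduce modulo $\m_K$, not modulo $\Pi$.
\item The ``main obstacle'' you anticipate in characteristics $2$ and $3$ is not one. Every cuspidal Weierstrass cubic is carried to $y^2=x^3$ by a unipotent Weierstrass change, uniformly in all characteristics (Proposition~\ref{prop:RWNF}~(ii)). No discriminant or $c_4$ analysis is needed.
\item Step 3 is not yet a computation. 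Use the chart $D_+(\tilde x)\cap D_+(\tilde y)=\Spec\OO_K[u^{\pm1},r,s]/(rs-\Pi)$, with $u=y^2/x^3$, $r=y/x$, $s=\Pi x/y$. There the equation of $\X$ reduces modulo $\m_K$ to $u^{-3}(u-1)+rA+sB$. Hence $u$ can be eliminated at $u=1$, $r=s=0$, and $\widehat{\OO}_{\X,\tilde P}\cong\OO_K[[r,s]]/(rs-\Pi)$. This gives an ordinary node of thickness $v_K(\Pi)=t_{\max}$.
\end{enumerate}
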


The exceptional component is explicit. With
\[
\bar a_{i,j}:=\overline{\Pi^{2i+3j-6}a_{i,j}},
\]
it is the Weierstrass cubic
\[
y^2+\bar a_{1,1}xy+\bar a_{0,1}y
=
x^3-\bar a_{2,0}x^2-\bar a_{1,0}x-\bar a_{0,0}.
\]
Thus the local problem is reduced to choosing the coordinates so that this cubic
is semistable.

The proof has two parts. First we show that \(t_{\max}\) attains a maximum,
after allowing finite extensions of \(K\). Coordinates achieving this maximum
are called optimal, and we show that for optimal coordinates the exceptional cubic is
semistable.

For computations we use a more rigid criterion. Instead of searching directly for optimal coordinates, we impose three explicit coefficient conditions on the exceptional Weierstrass cubic. The solutions form an auxiliary scheme of coordinate changes, which we call the rigidifying scheme; on the open locus where the resulting cubic is in so-called strong rigidified form, this scheme is finite étale. Thus the search for suitable coordinates becomes a finite separable algebraic problem, which is the basis of the implementation.

The construction is implemented in SageMath~\cite{SageMath}.
The relevant code is part of the package \href{https://github.com/kst3rn/StabilityFunction}{\texttt{StabilityFunction}} (\cite{KletusGitHub}). The routine \texttt{resolve\_cusp} computes a field
extension over which the local resolution exists, the value \(t_{\max}\), the
exceptional Weierstrass cubic, and the corresponding coordinate-change data.
In the global quartic algorithm of \cite{SSW}, this local routine is applied
independently at the cusps of a GIT-stable special fiber and supplies the missing
local step from the GIT-stable plane model to the stable model.

\subsection{Relation with Hassett's work, and future direction}

The construction in this paper also suggests a broader direction. In
\cite{SternWewers} and \cite{SSW}, the search for a good model is organized by
varying the projective embedding of the curve; globally this variation is naturally
parametrized by the Bruhat--Tits building. In the present local situation the same
idea appears in a more elementary form: one varies the admissible coordinates
\((x,y)\) until the associated weighted blow-up has semistable exceptional
component. This point of view is close in spirit to Hassett's study of local stable
reduction of plane curve singularities, where stable reduction is treated through
embedded modifications of the ambient surface, and where toric and weighted
projective geometry naturally appear for toric and quasitoric singularities
\cite{hasset2000}. Although the goals are different, Hassett's results suggest that
the method developed here for cusps should extend to more general plane curve
singularities, with the single weighted blow-up replaced by suitable toric
modifications of the ambient surface.

\subsection*{Structure of the paper}
Section~\ref{sec:weierstrass} recalls the facts on Weierstrass cubics needed for
the exceptional component, including rigidified Weierstrass normal form and the
finite étaleness of the strong rigidifying locus. In
Section~\ref{sec:blow-up} we construct the weighted blow-up associated with
admissible coordinates and compute its special fiber.
In
Section~\ref{sec:existence_resolution} we prove the existence of optimal
coordinates and hence the stable-resolution theorem.
In Section \ref{sec:rigidification} we replace the nonconstructive optimality argument by an effective
rigidification procedure. Section \ref{subsec:rigidified} proves the existence of rigidified
coordinates, while Section \ref{subsec:monodromy} explains how the monodromy action determines
the minimal field of definition of the stable resolution.
Finally, Section~\ref{sec:implementation} discusses the implementation and its
use in stable reduction of plane quartics.

\subsection*{Convention on coordinate changes.}

Projective coordinates on
\(\PP^2_R\) will be written as row vectors, in the order
\[
(z,x,y),
\qquad
\PP^2_R=\Proj R[z,x,y].
\]
Then a {\em change of coordinates} is represented by a matrix \(g\in \GL_3(R)\). Given projective coordinates \((z,x,y)\), the new
coordinates \((z',x',y')\) are defined by
\[
(z',x',y'):=(z,x,y)\cdot g^{-1}.
\]
We write this as
\[
(z',x',y')=\lexp{g}{(z,x,y)}.
\]
Equivalently,
\[
(z,x,y)=(z',x',y')\cdot g.
\]
Thus, if \(F(z,x,y)=0\) is the equation of a plane curve in the old coordinates,
then its equation in the new coordinates is
\[
\lexp{g}{F}:=F((z,x,y)\cdot g),
\]
after renaming \((z',x',y')\) as \((z,x,y)\). We indicate this substitution by
\[
(z,x,y)\leftarrow (z,x,y)\cdot g.
\]

For affine coordinates we use the chart \(z=1\). The affine coordinate changes
are obtained by restricting to matrices which preserve this chart, i.e. matrices
whose first column is \((1,0,0)^t\). 
On the affine chart \(z=1\), the substitution convention becomes
\[
(x,y)\leftarrow (x,y)\cdot g,
\]
where this means
\[
x\leftarrow \alpha+ax+by,\qquad
y\leftarrow \beta+cx+dy.
\]
Equivalently, the new affine coordinates \((x',y')\) are determined by
\[
(1,x',y')=(1,x,y)\cdot g^{-1}.
\]
With this convention, affine coordinate systems form a left principal homogeneous
space under the affine linear group.


\section{Weierstrass cubics} \label{sec:weierstrass}

This section collects some elementary facts on Weierstrass equations for plane cubics which will be used later. Throughout this section we assume that \(k\) is an algebraically closed field, and we write
\(p=\operatorname{char}(k)\).

\subsection{Weierstrass equations}

We first recall the standard Weierstrass presentation of an integral genus-one
curve with a chosen smooth point. The coordinate changes preserving this form
are encoded by the Weierstrass group.

\begin{definition}\label{def:WNF}
	A plane cubic $C\subset \PP^2_k$ 
	is said to be \emph{in Weierstrass normal form} (WNF) if it is given by a homogeneous equation of the form
	\begin{equation}\label{eq:WNF_homog}
		y^2 z + a_1 x y z + a_3 y z^2
		=
		x^3 + a_2 x^2 z + a_4 x z^2 + a_6 z^3,
	\end{equation}
	with $a_i\in k$.
\end{definition}

If $C\subset\PP^2_k$ is in WNF then $C$ is an integral projective curve of arithmetic genus one, and $O:=[0:1:0]$ is a smooth point of $C$. Moreover, $C$ is either smooth, or has a unique singularity, which is either a node or a cusp. If $C$ has a node (resp.\ a cusp), we say that $C$ is {\em nodal} (resp.\ \emph{cuspidal}). If it is smooth or nodal, we call it {\em semistable}.

We note that if $C$ is semistable, then $(C,O)$ is a {\em $1$-pointed stable curve of genus one}.

\begin{definition} \label{def:W-group}
  The {\em Weierstrass group} is the closed subgroup $W\subset\PGL_3(k)$ given by matrices of the form
  \[
       \begin{pmatrix}
       	   1 & \alpha & \beta \\
       	   0 & u^2 & u^2\gamma \\
       	   0 & 0   & u^3 
       \end{pmatrix}
  \]	
  with $\alpha,\beta,\gamma\in k$ and $u\in k^\times$.
  We let $U\subset W$ denote the unipotent subgroup defined by $u=1$.
\end{definition}

If $C\subset \PP^2_k$ is a cubic in WNF and $g\in W$, then the cubic $C'=\lexp{g}{C}$ obtained from $C$ by the coordinate change\footnote{Note that we use the convention mentioned at the beginning that we order the variables as $(z,x,y)$.}  
\[
    z \leftarrow z,\; x \leftarrow \alpha z + u^2x, \; y \leftarrow \beta z + u^2\gamma x + u^3 y, 
\]
corresponding to $g$ is again in WNF. Equivalently, $C'=g^*(C)$, where $g^*$ denotes the automorphism of $\PP^2_k$ induced by $g$. 

\begin{proposition} \label{prop:WNF}
  Let $C$ be an integral projective curve over $k$, of arithmetic genus one, and $O\in C$ a smooth point. 
  \begin{enumerate}[(i)]
  \item 
    There exists an embedding $\phi:C\inj\PP^2_k$ such that $\phi(C)$ is a cubic in Weierstrass normal form and $\phi(O)=[0:1:0]$.
  \item 
    If $\phi':C\inj\PP^2_k$ is another embedding as in (i), then there exists a unique element $g\in W$ such that $\phi'=g^*\circ \phi$.
  \end{enumerate}	
\end{proposition}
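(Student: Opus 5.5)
Part (i) follows the classical Riemann--Roch argument, adapted to integral curves of arithmetic genus one. Since $O$ is a smooth point, it is a Cartier divisor, so the line bundles $\OO_C(nO)$ make sense. Because $C$ is integral with $p_a(C)=1$, the dualizing sheaf $\omega_C$ is trivial: it has degree $0$ and $h^0(\omega_C)=h^1(\OO_C)=1$, so it has a nonzero section, which can only be a nowhere-vanishing one. Riemann--Roch on the Gorenstein curve $C$ then gives $h^0(\OO_C(nO))=n$ for all $n\ge1$.

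From this we build the coordinate functions. We get $L(O)=k$. We choose $x\in L(2O)\setminus k$, which has a pole of exact order $2$ at $O$, and $y\in L(3O)\setminus L(2O)$, which has a pole of exact order $3$. Counting pole orders, the seven functions $1,x,y,x^2,xy,y^2,x^3$ lie in the $6$-dimensional space $L(6O)$. Hence they satisfy a linear relation. In that relation the coefficients of $y^2$ and $x^3$ must both be nonzero, since these are the only two functions with a pole of order $6$. After rescaling $x$ and $y$ by suitable powers of a unit $u$, we may assume the relation has the form \eqref{eq:WNF_homog}.

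Next we show that the map $\phi=[1:x:y]$, which is the map given by the complete linear system $|3O|$, is a closed embedding with image this cubic. The degree-$3$ invertible sheaf $\OO_C(3O)$ on an integral curve of genus one is very ample; this is the standard criterion $\deg\ge 2p_a+1$, which holds for Gorenstein integral curves. So $\phi$ embeds $C$ as a curve of degree $3$. That curve is contained in the cubic defined by the relation, so the two coincide. Finally, $\phi(O)=[0:1:0]$, because $y$ has the highest pole order at $O$.

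For part (ii), let $\phi'$ be a second embedding, with coordinates $x'$, $y'$. Since $\phi'(C)$ is in WNF and $\phi'(O)=[0:1:0]$, the functions $x'$ and $y'$ have poles only at $O$, of exact orders $2$ and $3$. Indeed, the line $z=0$ meets the cubic only at $O$, with multiplicity $3$, and the pole orders follow from this. Therefore $x'\in L(2O)$ and $y'\in L(3O)$, so
\[
x'=a x+\alpha, \qquad y'=b y+c x+\beta,
\]
with $a,b\neq0$. Comparing the leading coefficients of $y'^2=x'^3+\dots$ gives $b^2=a^3$. Setting $u=b/a$ yields $a=u^2$ and $b=u^3$. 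After writing $c=u^2\gamma$, this is exactly a matrix of the shape in Definition~\ref{def:W-group}, up to the inverse and transpose convention for coordinate changes. Uniqueness holds because $g$ is determined by the images of the coordinate functions $1,x,y$. The only residual ambiguity is the scalar, and we are working in $\PGL_3$, so the scalar does not matter.

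I expect the main obstacle to be the step in (i) for the singular cases, nodal and cuspidal. There one must justify $\omega_C\cong\OO_C$, the Riemann--Roch dimension counts, and the very ampleness of $\OO_C(3O)$ for a non-normal curve. I would handle this through duality on Gorenstein curves: $C$ is a plane-embeddable, hence Gorenstein, curve, or alternatively one can use the fact that it is a local complete intersection. An alternative route avoids very ampleness: show directly that $\phi$ is birational onto the image cubic and that the image has arithmetic genus $1$, which forces $C$ to map isomorphically onto it.
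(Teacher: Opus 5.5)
Your proposal is correct and takes the same route as the paper. The paper simply cites Silverman's Riemann--Roch proof of Proposition III.3.1 and remarks that it carries over to integral Gorenstein curves of arithmetic genus one; you supply the singular-case details it leaves implicit, namely $\omega_C\cong\OO_C$, $h^0(\OO_C(nO))=n$, that $|3O|$ maps $C$ isomorphically onto the Weierstrass cubic, and the explicit shape of the transition matrix in (ii).

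One caution: do not justify the Gorenstein property through plane-embeddability or the local complete intersection property, since both are circular here. Your first-paragraph argument already proves $\omega_C\cong\OO_C$ with no prior assumption, provided you read $\deg\omega_C$ as $\chi(\omega_C)-\chi(\OO_C)=0$ for the torsion-free rank-one sheaf $\omega_C$. Likewise, your alternative for the embedding (birationality onto the cubic plus equality of arithmetic genera) is the cleanest route and avoids citing the $\deg\ge 2p_a+1$ very-ampleness criterion.
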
	

\begin{proof}
If $C$ is smooth, this is \cite[Proposition III.3.1]{SilvermanAEC}. However, the proof goes through in the general case as well. Indeed, the proof only uses Riemann--Roch for the divisor \(3O\), which
applies to integral projective Gorenstein curves of arithmetic genus one.
\end{proof}	

\begin{corollary} \label{cor:wnf_stabilizer}
  If $C\subset\PP^2_k$ is a cubic in Weierstrass normal form then the stabilizer of $C$ in $W$ naturally identifies with the automorphism group of the pointed curve $(C,O)$. 
\end{corollary}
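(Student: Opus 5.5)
The plan is to derive the corollary directly from Proposition~\ref{prop:WNF}, by building maps in both directions between $\mathrm{Stab}_W(C)$ and $\Aut(C,O)$ and checking they are mutually inverse group homomorphisms. Let $\iota:C\inj\PP^2_k$ be the tautological inclusion. It is an embedding as in Proposition~\ref{prop:WNF}(i), since $C$ is in WNF and $\iota(O)=[0:1:0]$.

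First, given $\sigma\in\Aut(C,O)$, consider $\phi':=\iota\circ\sigma$. This is again an embedding of $C$ whose image is $C$, hence in WNF, and $\phi'(O)=\iota(\sigma(O))=[0:1:0]$. By Proposition~\ref{prop:WNF}(ii) there is a unique $g_\sigma\in W$ with $\iota\circ\sigma=g_\sigma^*\circ\iota$. Comparing images gives $g_\sigma^*(C)=C$, so $g_\sigma\in\mathrm{Stab}_W(C)$. I will interpret $g^*(C)$ as $\lexp{g}{C}$, following the convention stated after Definition~\ref{def:W-group}.

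Conversely, if $g\in W$ stabilizes $C$, then the automorphism $g^*$ of $\PP^2_k$ restricts to an automorphism $\sigma_g$ of $C$. Every element of $W$ fixes $[0:1:0]$: in the row-vector convention, $(0,1,0)\cdot g$ is proportional to $(0,1,0)$ because the second row of $g$ is $(0,u^2,u^2\gamma)$. Strictly, $(0,1,0)\cdot g=(0,u^2,u^2\gamma)$, so the check needs care. What must be verified is that the point $O=[z:x:y]=[0:0:1]$ is fixed. Indeed $(0,0,1)\cdot g=(0,0,u^3)$, so $O$ is fixed and $\sigma_g\in\Aut(C,O)$.

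By the uniqueness in Proposition~\ref{prop:WNF}(ii), the two constructions are mutually inverse: $g_{\sigma_g}=g$ and $\sigma_{g_\sigma}=\sigma$. Finally I will check that the bijection respects composition. Using $\iota\circ\sigma\tau=g_\sigma^*\circ\iota\circ\tau=g_\sigma^*g_\tau^*\circ\iota$ and uniqueness, we get $g_{\sigma\tau}^*=g_\sigma^*g_\tau^*$. The only point needing attention is whether $g\mapsto g^*$ is covariant or contravariant under the convention $\lexp{g}{F}=F((z,x,y)\cdot g)$. If it is contravariant, the identification is an anti-isomorphism, and composing with inversion turns it into an isomorphism. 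This bookkeeping is the only mild obstacle, so "naturally identifies" holds either way.
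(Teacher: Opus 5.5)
Your proposal is correct and is the argument the paper has in mind: the corollary is stated without proof as an immediate consequence of the uniqueness in Proposition~\ref{prop:WNF}(ii), applied to the embeddings $\iota$ and $\iota\circ\sigma$, exactly as you do. In a write-up, drop the opening claim that $W$ fixes $(0,1,0)$ (it is false) and keep only your correct check that $O=[0:0:1]$ in the $(z,x,y)$ ordering is fixed by $W$; also, under the paper's convention $g^*$ acts on row vectors by $P\mapsto Pg^{-1}$, so $(gh)^*=g^*h^*$ and $\sigma\mapsto g_\sigma$ is a genuine homomorphism, with no inversion needed.
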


\subsection{Rigidified Weierstrass equations}

We now impose some additional coefficient conditions on a Weierstrass equation which make the normal form sufficiently rigid.

\begin{definition} \label{def:RWNF}
  Let $C\subset\PP^2_k$ be a cubic in Weierstrass normal form, with equation \eqref{eq:WNF_homog}. We say that $C$ is in \emph{weak rigidified Weierstrass normal form} (wRWNF) if either $p=2$ and
  \begin{equation} \label{eq:rwnf_p=2}
     a_2=a_4=a_6=0,
  \end{equation}
  or $p\neq 2$ and 
  \begin{equation} \label{eq:rwnf_p!=2}
  	a_1=a_3=a_6=0.
  \end{equation}
  We say that it is in \emph{strong rigidified Weierstrass normal form} (sRWNF) if in addition, we have $a_3\neq 0$ for $p=2$ resp.\ $a_4\neq 0$ for $p\neq 2$. 
  
  We will usually omit the qualifier {\em strong} when referring to a strong rigidified Weierstrass normal form. 
\end{definition}

\begin{remark}
  One easily checks the following: if $C\subset\PP^2_k$ is in weak rigidified Weierstrass normal form then $P=(0,0)$ is a point on $C$. The strongness condition is then equivalent to $P$ being a smooth point. 	
  
  Assume that $C$ is in (strong) rigidified Weierstrass normal form and that, moreover, $C$ is smooth. Then for $p=2$ we have
  \[
      C:\; y^2+a_1xy+a_3y = x^3, \quad a_3(a_3+a_1^3)\neq 0,
  \]
  and $P=(0,0)$ is a $3$-torsion point of the elliptic curve $(C,O)$. Up to normalizing $a_3=1$ by the transformation $x\leftarrow u^2x$, $y\leftarrow u^3y$ for some $u\in k^\times$ with $u^3=a_3$, $(C,O)$ is an elliptic curve in {\em Deuring normal form}.
  
  Similarly, for $p\neq 2$ we have
  \[
     C:\; y^2 = x^3 + a_2x^2+a_4x, \quad a_4(4a_4-a_2^2)\neq 0,
  \] 
  $P=(0,0)$ is a $2$-torsion point, and up to the normalization $a_4=1$, $(C,O)$ is an elliptic curve in {\em Montgomery normal form}.
\end{remark}

\begin{proposition} \label{prop:RWNF}
	Let \(C\subset\PP^2_k\) be a cubic in Weierstrass normal form.
	\begin{enumerate}[(i)]
		\item
		There exists an element \(g\in U\) such that \(C':=\lexp{g}{C}\) is in
		weak rigidified Weierstrass normal form. If \(C\) is semistable, then \(g\)
		may be chosen such that \(C'\) is in strong rigidified Weierstrass normal form.
		\item
		If \(C\) is in weak rigidified Weierstrass normal form, with equation
		\eqref{eq:WNF_homog}, then \(C\) is semistable if and only if at least one
		of the coefficients not forced to vanish in the definition of wRWNF is nonzero.
		Equivalently, \(C\) is cuspidal if and only if
		\[
		C:\; y^2z=x^3.
		\]
	\end{enumerate}
\end{proposition}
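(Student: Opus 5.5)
The plan is to work in the affine chart \(z=1\), where an element of \(U\) acts by the substitution \(x\leftarrow x+\alpha\), \(y\leftarrow y+\gamma x+\beta\). I will treat \(p\neq 2\) and \(p=2\) separately. In each case I first read the rigidity conditions geometrically, then choose a suitable point of \(C\) and move it to the origin.

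\emph{Case \(p\neq 2\).} Completing the square with \(y\leftarrow y-(a_1x+a_3)/2\), i.e.\ \(\gamma=-a_1/2\) and \(\beta=-a_3/2\), kills \(a_1\) and \(a_3\). The equation becomes \(y^2=h(x)\) with \(h\) a monic cubic. Next I translate \(x\leftarrow x+\alpha\), where \(\alpha\) is a root of \(h\). The new constant term is \(h(\alpha)=0\), so \(a_6=0\), and the new coefficient \(a_4\) equals \(h'(\alpha)\). This proves the first part of (i). The singular points of \(y^2=h(x)\) are the points \((\alpha,0)\) with \(\alpha\) a multiple root of \(h\). If \(C\) is semistable, \(h\) is not a cube, so it has a simple root \(\alpha\). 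Choosing that root gives \(h'(\alpha)\neq 0\), which is the strong form. For (ii), in the weak form \(y^2=x(x^2+a_2x+a_4)\), the curve is cuspidal exactly when the cubic on the right has a triple root. Since \(0\) is a root, this happens exactly when \(a_2=a_4=0\).

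\emph{Case \(p=2\).} Here the conditions \(a_2=a_4=a_6=0\) say that \(P=(0,0)\) lies on \(C\) and that the line \(y=0\) meets \(C\) only at \(P\), with multiplicity \(3\). The third intersection point at infinity is excluded, since the line \(y=0\) passes through \(O\) only if it is the line at infinity. So I look for a point \(P\neq O\) of \(C\) that is either a flex or the singular point with tangent cone a double line. I move \(P\) to the origin with \(\alpha,\beta\). Then I choose \(\gamma\) so that the (flex) tangent, resp.\ the reduced tangent cone, becomes \(y=0\). This is possible as long as that line is not vertical. A vertical line through an affine point also passes through \(O\), whereas the line in question meets \(C\) only at \(P\). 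The existence of \(P\) is the step I expect to need the most care. If \(C\) is cuspidal, I take \(P\) to be the cusp. If \(C\) is semistable, the flexes \(\neq O\) are the nontrivial \(3\)-torsion points of the group \(C^{\mathrm{sm}}\). This group is an elliptic curve or \(\mathbb{G}_m\), and \(p=2\neq 3\), so nontrivial \(3\)-torsion exists. This gives a smooth \(P\). Finally, \(\partial F/\partial y\) at the origin equals \(a_3\), so smoothness of \(P\) is exactly \(a_3\neq 0\), which is the strong form.

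For (ii) in characteristic \(2\), I consider \(y^2+a_1xy+a_3y=x^3\). If \(a_1=a_3=0\), this is the cusp \(y^2=x^3\). If \(a_1=0\) and \(a_3\neq 0\), then \(\partial F/\partial y=a_3\neq 0\), so there is no affine singularity. The point at infinity \(O\) is always smooth, so \(C\) is smooth. If \(a_1\neq 0\), the only possible singular point is \(x_0=a_3/a_1\), \(y_0=x_0^2/a_1\). After translating it to the origin, the quadratic part is \(y^2+a_1xy+x_0x^2\). Because \(a_1\neq 0\), this form is not a square, so the singularity (if there is one) is a node. Together with the \(p\neq 2\) case, this shows that \(C\) is cuspidal if and only if all the coefficients that wRWNF leaves free vanish, i.e.\ \(C: y^2z=x^3\).
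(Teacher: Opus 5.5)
Your proposal is correct and follows the paper's proof essentially step by step. For $p\neq 2$ you complete the square and translate to a simple root of the cubic. For $p=2$ you move a nontrivial $3$-torsion point, resp.\ the cusp, to the origin and make its tangent the line $y=0$, and you handle (ii) by the same case analysis. The only minor deviations are these: in (ii) for $p=2$ you observe directly that $y^2+a_1xy+x_0x^2$ is not a square when $a_1\neq 0$, instead of normalizing $a_1=1$ and checking $a_3\in\{0,1\}$, and you justify explicitly, via Bezout, that the tangent line is not vertical, which the paper encodes in the condition $\ell\not\ni O$.
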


\begin{proof}
	We work on the affine chart \(z=1\).
	
	Assume first that \(p\neq 2\). Completing the square gives
	\[
	y^2=f(x), \qquad f=x^3+a_2x^2+a_4x+a_6 .
	\]
	Choose a root \(r\) of \(f\) and translate \(x\leftarrow x+r\). Then the
	constant term vanishes, so the equation is in weak RWNF. If \(C\) is semistable,
	then \(f\) is not the cube of a linear polynomial, hence has a simple root. Choosing
	\(r\) to be such a root gives \(a_4'=f'(r)\neq0\), so the form is strong.
	
	Now assume \(p=2\). We claim that there is a point \(P\neq O\) and a line
	\(\ell\not\ni O\) such that \(\ell\cdot C=3P\), with \(P\) smooth if \(C\) is
	semistable. Indeed, in the semistable case choose a nonzero point
	\(P\in C^{\rm sm}[3]\); in the cuspidal case take \(P\) to be the cusp and
	\(\ell\) its tangent line. After a unipotent coordinate change
	\[
	x\leftarrow x+r,\qquad y\leftarrow y+sx+t,
	\]
	we may assume \(P=(0,0)\) and \(\ell\) is the line \(y=0\). Intersecting the new
	Weierstrass equation with \(y=0\) gives
	\[
	x^3+a_2'x^2+a_4'x+a_6'=x^3,
	\]
	hence \(a_2'=a_4'=a_6'=0\). Thus the equation is in weak RWNF, and it is
	strong precisely when \(P\) is smooth.
	
	It remains to prove (ii). For \(p\neq2\), a weak RWNF equation has the form
	\[
	y^2=f(x)=x(x^2+a_2x+a_4).
	\]
	A singularity is cuspidal exactly when \(f\) has a triple root. Since \(0\) is a
	root, this happens exactly when \(f=x^3\), i.e. \(a_2=a_4=0\).
	
	For \(p=2\), write
	\[
	F=y^2+(a_1x+a_3)y+x^3 .
	\]
	If \(a_1=a_3=0\), this is \(y^2=x^3\). Otherwise, if \(a_1=0\), then
	\(\partial F/\partial y=a_3\neq0\), so \(C\) is smooth. If \(a_1\neq0\), scale
	so that \(a_1=1\). The only possible singular point is
	\((a_3,a_3^2)\), and it lies on \(C\) only for \(a_3=0\) or \(a_3=1\). Expanding
	at that point gives tangent cone \(y(y+x)\) in the first case and
	\(y^2+xy+x^2\) in the second; both split into two distinct linear factors over
	\(k\). Hence the singularity is a node. This proves (ii).
\end{proof}

\subsection{The rigidifying scheme}

Fix a Weierstrass cubic \(C\subset\PP^2_k\). We study the scheme of unipotent coordinate
changes which put \(C\) into weak rigidified form. We show that the strong part of this
scheme is the transverse, hence étale, part of the rigidifying equations.

In the following, we will interpret a cubic $C\subset\PP^2_k$ in Weierstrass normal form with equation \eqref{eq:WNF_homog} as a point $(a_1,a_2,a_3,a_4,a_6)\in \AA_k^5$. Let us fix one $C\in\AA^5_k$.
We identify the unipotent subgroup $U\subset W$ (Definition \ref{def:W-group}) with the affine space $\AA^3_k$ with coordinates $\alpha,\beta,\gamma$. Define
\[
    S := \{ g\in U \mid \text{$\lexp{g}{C}$ is in weak RWNF}\}\subset U=\AA^3_k.
\]
Explicitly, $S\subset\AA^3_k$ is defined by three equations
\[
    A_i(\alpha,\beta,\gamma)=0, \quad i\in I,
\]
where $I=\{2,4,6\}$ if $p=2$ and $I=\{1,3,6\}$ if $p\neq 2$, and where $A_i\in k[\alpha,\beta,\gamma]$ are the coefficients of 
\[
    F(x+\alpha z, y + \gamma x + \beta z, z).
\]
We consider $S$ as a closed, but not necessarily reduced subscheme of $\AA^3_k$, defined by these equations. 

Let $S'\subset S$ be the open subscheme corresponding to {\em strong} RWNFs. It is defined by the condition $A_3\neq 0$ for $p=2$ and $A_4\neq 0$ for $p\neq 2$. 

\begin{proposition} \label{prop:srwnf_is_etale}
  If $C$ is semistable, then the scheme $S'$ is nonempty and finite \'etale over $k$. In particular, $S'$ is reduced.
\end{proposition}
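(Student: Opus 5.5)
The plan is to use the transitivity of \(U\) on itself to reduce the étaleness question to a Jacobian computation at the identity for a cubic that is already in strong rigidified form, and to get finiteness and nonemptiness separately. Nonemptiness is immediate from Proposition~\ref{prop:RWNF}(i): since \(C\) is semistable, there is \(g\in U\) with \(\lexp{g}{C}\) in strong RWNF, so \(g\in S'(k)\).

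\emph{Reduction to \(g=1\).} Let \(g_0\in S'(k)\) and set \(C_0:=\lexp{g_0}{C}\). Composing coordinate changes gives an isomorphism of \(U=\AA^3_k\) (a translation in the group) that carries the rigidifying scheme of \(C_0\) onto that of \(C\), sends \(1\) to \(g_0\), and respects the strongness condition. Since the \(A_i\) are the coefficients of the transformed equation, this identification is scheme-theoretic. It therefore suffices to show the following: if \(C\) is itself in strong RWNF, then the Jacobian matrix of \((A_i)_{i\in I}\) with respect to \((\alpha,\beta,\gamma)\) is invertible at \((0,0,0)\). This makes \(S\), and hence the open subscheme \(S'\), étale over \(k\) at every \(k\)-point of \(S'\).

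\emph{Jacobian computation.} I will write out the \(A_i\) from \(F(x+\alpha,\,y+\gamma x+\beta)\) on the chart \(z=1\).

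For \(p\neq2\) one finds \(A_1=a_1+2\gamma\) and \(A_3=a_3+2\beta+a_1\alpha\). The coefficient \(A_6\) is, up to sign, the value \(F(\alpha,\beta)\), whose linear part at the origin for \(a_1=a_3=a_6=0\) is \(-a_4\alpha\). The Jacobian at \(0\) is therefore triangular after reordering, with diagonal entries \(2,2,-a_4\), and these are nonzero by strongness.

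For \(p=2\), setting \(y=0\) in the transformed equation gives
\begin{align*}
A_2&=\gamma^2+a_1\gamma+\alpha+a_2,\\
A_4&=a_1(\alpha\gamma+\beta)+a_3\gamma+\alpha^2+a_4,\\
A_6&=\beta^2+a_1\alpha\beta+a_3\beta+\alpha^3+a_2\alpha^2+a_4\alpha+a_6 .
\end{align*}
At the origin, with \(a_2=a_4=a_6=0\), the rows of the Jacobian are \((1,0,a_1)\), \((0,a_1,a_3)\) and \((0,a_3,0)\). Its determinant is \(a_3^2\neq0\). I expect these expansions to be routine, but they need care with signs and characteristic-\(2\) cancellations.

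\emph{Finiteness.} Since \(S'\) is of finite type over \(k\) and étale at all its points, it is a disjoint union of reduced points. It remains to see that \(S'(k)\) is finite; I expect this to be the main point needing an argument rather than a computation.

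For \(p\neq2\), the equation \(A_1=0\) determines \(\gamma\), and \(A_3=0\) then determines \(\beta\) as an affine function of \(\alpha\). Substituting into \(A_6\) leaves a cubic in \(\alpha\) with leading term \(-\alpha^3\), since \(\beta^2\) is at most quadratic in \(\alpha\). Hence the whole of \(S\) is finite of degree \(3\).

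For \(p=2\), I would instead argue geometrically, as in the proof of Proposition~\ref{prop:RWNF}. A point \(g\in S'(k)\) means that the line \(y=0\) meets \(\lexp{g}{C}\) in \(3P\) with \(P=(0,0)\) smooth. Here \(\alpha,\beta\) are determined by the coordinates of \(P\) on \(C\), and \(\gamma\) by the tangent slope at \(P\). So \(g\) is determined by a smooth flex \(P\neq O\), i.e.\ a nonzero point of the finite group \(C^{\rm sm}[3]\). Hence \(S'(k)\) is finite, and \(S'\) is finite étale, in particular reduced.
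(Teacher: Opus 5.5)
Your proposal is correct and follows the paper's proof: after translating a point of \(S'\) to the identity, the Jacobian of the three rigidifying equations has determinant \(\pm 4a_4\) for \(p\neq 2\) and \(a_3^2\) for \(p=2\), exactly as in the paper. The only difference is the finiteness step. Your explicit arguments there (elimination to a monic cubic in \(\alpha\) for \(p\neq 2\), and an injection of \(S'(k)\) into \(C^{\rm sm}[3]\setminus\{O\}\) for \(p=2\)) are correct but unnecessary: a finite-type \(k\)-scheme that is \'etale at every point is quasi-compact and discrete, hence finite, and this is how the paper concludes.
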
	

\begin{proof}
	That $S'$ is nonempty follows from Proposition \ref{prop:RWNF} (i). For \'etaleness, it is enough to show that, at every point of
	\(S'\), the differentials of the three defining equations form a basis
	of the cotangent space of \(U\).
	
	Let \(g_0\in S'(k)\). Replacing \(C\) by \(\lexp{{g_0}}{C}\), and
	translating \(U\) by \(g_0\), we may assume that \(g_0=1\) and that \(C\) itself
	is already in strong RWNF.
	
	Suppose first that \(p\neq 2\). Then
	\[
	C:\quad y^2=x^3+a_2x^2+a_4x,
	\qquad a_4\neq 0.
	\]
	Consider the infinitesimal coordinate change
	\[
	x\leftarrow x+\varepsilon\alpha,\qquad
	y\leftarrow y+\varepsilon(\gamma x+\beta),
	\qquad \varepsilon^2=0.
	\]
	Modulo \(\varepsilon^2\), the coefficients defining weak RWNF change by
	\[
	A_1=2\gamma\varepsilon,\qquad
	A_3=2\beta\varepsilon,\qquad
	A_6=a_4\alpha\varepsilon.
	\]
	Thus
	\[
	\det
	\frac{\partial(A_1,A_3,A_6)}
	{\partial(\gamma,\beta,\alpha)}
	=
	4a_4\neq 0.
	\]
	Hence the equations \(A_1=A_3=A_6=0\) cut out an étale subscheme at the
	identity.
	
	Now suppose that \(p=2\). Then
	\[
	C:\quad y^2+(a_1x+a_3)y=x^3,
	\qquad a_3\neq 0.
	\]
	For the same infinitesimal coordinate change, the coefficient transformation
	formulae give
	\[
	\begin{aligned}
		A_2 &= (\alpha+a_1\gamma)\varepsilon,\\
		A_4 &= (a_3\gamma+a_1\beta)\varepsilon,\\
		A_6 &= a_3\beta\varepsilon .
	\end{aligned}
	\]
	Therefore
	\[
	\det
	\frac{\partial(A_2,A_4,A_6)}
	{\partial(\alpha,\gamma,\beta)}
	=
	a_3^2\neq 0.
	\]
	Hence the equations \(A_2=A_4=A_6=0\) cut out an étale subscheme at the
	identity.
	
	We have shown that \(S'\) is étale over \(k\) at all of its points.
	In particular, it is zero-dimensional. Since \(S'\) is of finite type
	over \(k\), it is finite over \(k\). Hence \(S'\) is finite étale.
\end{proof}


\section{A local weighted blow-up construction}
\label{sec:blow-up}

Let \((\X_0,P)\) be a smoothing of a cusp, as in the introduction. The goal of
this section is to attach to any suitable choice of local coordinates $(x,y)$ at \(P\) and a further parameter $t$  a
weighted blow-up
\[
\X = \X((x,y),t)\longrightarrow \X_0 .
\]
We compute its special fiber explicitly. The exceptional component will be a
Weierstrass cubic, and the only remaining question will be whether this cubic is
semistable. This reduces the construction of the stable resolution to the problem
of choosing the coordinates \((x,y)\) and the parameter \(t\) appropriately.

\subsection{Admissible coordinates} \label{subsec:admissible}

We recall the assumptions made in \S \ref{subsec:stable_resolution} and \S \ref{subsec:cusp_case}. So $K$ is a field complete with respect to a discrete valuation $v_K$, and $\X_0=V(f)\subset\AA_{\OO_K}^2$ is an affine plane curve model defined by a polynomial equation
\begin{equation} \label{eq:f}
   f = \sum_{i,j} a_{i,j} x^iy^j = 0,
\end{equation}
with $a_{i,j}\in\OO_K$. Let $X:=\X_{0,K}\subset \AA^2_K$ denote its generic fiber. We assume that $f$ is primitive with respect to $v_K$. So the special fiber $\X_{0,s}=V(\bar{f})\subset \AA_k^2$ is defined by the polynomial equation 
\begin{equation} \label{eq:fb}
  \bar{f} = \sum_{i,j} \bar{a}_{i,j} x^iy^j = 0,
\end{equation}
where $\bar{a}_{i,j}\in k$ is the image of $a_{i,j}$ in the residue field $k$.

We fix a closed point $P\in\X_{0,s}$, and assume that there exists an open neighborhood $\U_0\subset \X_0$ which is normal and such that $\U_0\backslash\{P\}$ is smooth over $\OO_K$. We also assume that $P\in\X_{0,s}$ is a cusp (\cite[Definition 2.1]{SSW}).

The coefficients \(a_{i,j}\) in \eqref{eq:f} depend on the chosen affine
coordinates \((x,y)\). The proof of our main theorem rests on choosing these
coordinates carefully. We first put the cusp into a fixed normal form and study
the weighted blow-up attached to such a choice; the next section explains how
to vary the coordinates so that this blow-up gives the stable resolution.

\begin{lemma} \label{lem:admissible}
\begin{enumerate}
\item
  There exist affine coordinates $x,y$ on $\AA_{\OO_K}^2$, centered at $P$, such that, after multiplying the equation $f$ by a unit in $\OO_K^\times$,
  \begin{equation} \label{eq:admissible_condition}
     v_K(a_{i,j})\;\; \begin{cases}
     	\;\;= 0, & 2i+3j=6, \\
     	\;\;> 0, & 2i+3j<6, \\
     	\;\;\geq 0, & 2i+3j>6.
     \end{cases} 
  \end{equation} 
\item 
  Whenever Condition \eqref{eq:admissible_condition} holds, there is at least one coefficient $a_{i,j}$ with $2i+3j<6$ that does not vanish. 
\end{enumerate}	
\end{lemma}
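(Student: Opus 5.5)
For (i), I would work first on the special fiber and then lift. By hypothesis \(\widehat{\OO}_{\X_{0,s},P}\cong k[[x,y]]/(y^2-x^3)\). Take any affine coordinates \(x,y\) over \(\OO_K\) centered at \(P\), so \(\bar f(0,0)=0\). The tangent cone of \(\bar f\) at \(P\) must be a double line \(\ell^2\); otherwise the singularity would be a node or smooth, or the multiplicity would be \(\ge 3\), and each of these contradicts the analytic type. After an \(\OO_K\)-linear change (lifting an element of \(\GL_2(k)\)) we may assume \(\ell=y\). Then \(\bar f=c\,y^2+(\text{terms of degree}\ge3)\) with \(c\neq0\). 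The coefficient of \(x^3\) must be nonzero: otherwise all terms of weighted degree \(\le 6\) in the \((2,3)\)-grading would be \(c y^2\) plus possibly \(xy\cdot(\ldots)\). One checks via the weighted initial form that the singularity is then of type \(A_{n}\) with \(n\ge 3\) or worse, which is not analytically a cusp (compare \(\delta\)-invariants or Milnor numbers; in characteristic \(p\), use the semigroup of the branch, which is \(\langle 2,3\rangle\) for a cusp).

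Next I would remove the unwanted weighted-degree-\(\le 6\) terms of \(\bar f\). In the \((2,3)\)-grading, the monomials of weighted degree \(\le6\) occurring in \(\bar f\) (degree \(\ge2\), and \(y^2\) present) are \(y^2, xy, x^2, x^3, x^2y\)-type terms together with lower ones. Since the tangent cone is \(y^2\), the terms \(x^2\) and \(xy\) are absent. The only possible offending term of weighted degree \(6\) besides \(y^2,x^3\) is \(x^3\) itself, so the weighted-degree-\(6\) part is \(c y^2+d x^3\) with \(c,d\neq0\). Terms of weighted degree \(<6\) are \(1,x,y,x^2,xy\), and all of these vanish in \(\bar f\). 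Rescaling \(x\leftarrow \lambda x\), \(y\leftarrow \mu y\) with \(\lambda,\mu\in\OO_K^\times\) and multiplying \(f\) by a unit, I can arrange the normal form \(y^2-x^3\) of \eqref{eq:cusp_normal_form}. This uses that \(k\) is algebraically closed, to solve \(\bar\lambda^3\bar d=-\bar\mu^2\bar c\), and then lifting. Condition \eqref{eq:admissible_condition} now follows directly: the coefficients of weighted degree \(<6\) reduce to \(0\), the two of degree \(6\) are units, and all others lie in \(\OO_K\). The main obstacle is the characteristic-free justification that the \(x^3\) coefficient is nonzero given the analytic type. I would handle it by noting that if the weighted-degree-\(6\) form were \(cy^2\) alone, then the branch (or branches) would have multiplicity sequence or semigroup different from \(\langle2,3\rangle\). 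Concretely, \(\bar f\in (y^2, x^4, x^2y, \ldots)\) gives \(\dim k[[x,y]]/(\bar f,\partial\text{-free invariant})\), and I would instead use \(\delta\ge2\), obtained by blowing up once and seeing a remaining singular point.

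For (ii), I would argue by contradiction using the smoothness of \(\X_0\setminus\{P\}\), or rather the normality of \(\U_0\). Suppose all \(a_{i,j}\) with \(2i+3j<6\) vanish, i.e. \(a_{0,0}=a_{1,0}=a_{0,1}=a_{2,0}=a_{1,1}=0\). Then \(f\in(x,y)^2\) over \(\OO_K\), so the section \(x=y=0\) lies on \(\X_0\). The generic fiber \(X\) is then singular at the \(K\)-point \((0,0)\), since \(f\) and both of its partial derivatives vanish there. Its closure is a horizontal curve in the singular locus of \(\X_0\), passing through \(P\), which contradicts \(\U_0\setminus\{P\}\) being smooth over \(\OO_K\). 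This part is straightforward.
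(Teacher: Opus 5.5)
Your proof of (ii) is the paper's argument. If all coefficients of weight $<6$ vanish, the $K$-point $x=y=0$ is a singular point of $X$ whose closure contains $P$. Its generic point therefore lies in every neighbourhood of $P$ and is a non-smooth point of $\U_0\setminus\{P\}$.

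For (i) the skeleton also agrees: put the special fiber into cusp normal form over $k$, then lift the coordinates to $\OO_K$. The difference is that the paper simply cites \cite[Lemma 2.2]{SSW} for the normal form, whereas you derive it from the analytic type. Your derivation is correct once trimmed to its clean core:
\begin{enumerate}
\item Multiplicity $2$, together with the fact that $k[[x,y]]/(y^2-x^3)$ is a domain (so the point is not a node), forces the tangent cone to be a double line.
\item After a linear change, the part of weight $\le 6$ is $cy^2+dx^3$, since $y^2$ and $x^3$ are the only monomials of weight $6$. Your list wrongly includes $x^2y$, which has weight $7$.
\item $d\neq 0$: in the chart $y=xy'$ of the ordinary blow-up, the strict transform is $cy'^2+dx+(\text{terms of order}\ge 2)$. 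If $d=0$ this is singular at the origin, giving $\delta\ge 2$ (multiplicity sequence $(2,2,\dots)$), while the cusp has $\delta=1$.
\end{enumerate}
This argument is characteristic-free. Your remarks about $A_n$-types and Milnor numbers are not, and you should drop them; for instance, the Milnor number of $y^2-x^3$ is infinite in characteristics $2$ and $3$.

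The final rescaling is not needed for \eqref{eq:admissible_condition}: $c,d\neq 0$ already make $a_{0,2}$ and $a_{3,0}$ units. If you want the exact normalization, Remark~\ref{rem:scaling} achieves it without extracting cube roots.

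In short, your route makes the lemma self-contained at the cost of some local algebra, while the paper outsources exactly that step to \cite{SSW}.
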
	

\begin{proof}
  By \cite[Lemma 2.2]{SSW} and the assumption that $P$ is a cusp, there exists a coordinate system $(x,y)$ for $\AA_k^2$ such that $P=(0,0)$ and $\X_{0,s}=V(\bar{f})$ is defined by a polynomial $\bar{f}\in k[x,y]$ of the form 
  \begin{equation} 
  	\bar{f} = y^2 - x^3 +\text{terms of \((2,3)\)-weighted degree \(>6\)}.
  \end{equation} 
  In particular, \eqref{eq:admissible_condition} holds. Lifting $(x,y)$ to a coordinate system for $\AA_{\OO_K}^2$ proves (i).
  
  Assume that $P=(0,0)\in\AA_k^2$ and that $a_{i,j}=0$ for all $i,j$ with $2i+3j<6$. Then the point $(0,0)\in \AA_K^2$ is a singular point of $X=V(f)$. The Zariski closure of this point in $\X_0$ contains $P$ and hence is contained in any open neighborhood of $P$. This contradicts our assumption made on $\X_0$, and proves (ii).
\end{proof}
	
\begin{definition} \label{def:admissible}
  An affine-linear coordinate system $(x,y)$ for $\AA_{\OO_K}^2$ such that $P=(0,0)\in\AA_k^2$ and \eqref{eq:admissible_condition} holds is called {\em admissible}. It is called {\em strictly admissible} if, moreover, the equation $f$ can be normalized such that
  \begin{equation} \label{eq:strictly_admissible_condition}
      a_{3,0} = -1, \quad a_{0,2}=1. 
  \end{equation}
\end{definition}

\begin{remark} \label{rem:scaling}
  Let $(x,y)$ be admissible coordinates. Then $a_{3,0},a_{0,2}\in\OO_K^\times$ are units, and after the coordinate change
  \[
       x\leftarrow ux,\quad y\leftarrow uy,
  \]
  with $u:=-a_{0,2}/a_{3,0}\in\OO_K^\times$, and multiplying $f$ with $1/a_{0,2}u^2$, \eqref{eq:strictly_admissible_condition} holds. Clearly, the valuations $v_K(a_{i,j})$ and therefore the validity of Condition \eqref{eq:admissible_condition} are preserved by this operation. Therefore, admissible coordinates become strictly admissible by a suitable scaling with units. 
\end{remark}

Depending on the choice of admissible coordinates $(x,y)$ and a further parameter $t\in\QQ_{>0}$, we construct a certain modification
\[
\X=\X((x,y),t)\to\X_0
\]
of $\X_0$ which is a candidate for the stable resolution of the cusp $P$; it satisfies all requirements of a stable resolution, with the possible exception of the condition that the exceptional fiber be semistable. In fact, if we denote the exceptional divisor of $\X\to\X_0$ by $\Xb_1$ and $\tilde{P}\in\Xb_1$ is the point where $\Xb_1$ meets the strict transform $\Xb_0$ of $\X_{0,s}$, then the pointed curve $(\Xb_1,\tilde{P})$ occurs naturally as a cubic in Weierstrass normal form (Definition \ref{def:WNF}). Therefore,  $\X\to\X_0$ is the stable resolution of the cusp $P$ if and only if $\Xb_1$ is semistable, i.e.\ not cuspidal. 

The construction of the modification $\X\to\X_0$ is done via a {\em weighted blow-up}, see e.g.\ \cite{abramovich2023birational}. As we could not find an accessible, down-to-earth reference, we devote the rest of this section to the details of the construction.

\subsection{The ambient weighted blow-up}
\label{subsec:blow-up}

We first construct the ambient modification of \(\AA^2_{\OO_K}\) in which the
candidate resolution will live. 

Fix a strictly admissible coordinate system $(x,y)$. Then
\[
   \X_0=\Spec(A/(f)),
\]
where $A:=\OO_K[x,y]$ and
\[
   f=\sum_{i,j} a_{i,j}x^iy^j
\]
such that \eqref{eq:admissible_condition} holds. The point $P\in\X_0\subset\AA_{\OO_K}^2$ corresponds to the maximal ideal $\m:=(\m_K,x,y)\lhd A$, which contains $f$. 

Set
\begin{equation} \label{eq:t_def}
	t_{\max}(x,y) := \min_{2i+3j < 6} \frac{v_K(a_{i,j})}{6-2i-3j}.
\end{equation}
This is a well-defined rational number by Lemma \ref{lem:admissible} (ii). Using Condition \ref{eq:admissible_condition} we immediately check the following claims.

\begin{lemma} \label{lem:t_max}
\begin{enumerate}
\item 
  $t_{\max}>0$.
\item 
  Choose a rational number  $t$ such that $0<t\leq t_{\max}(x,y)$. 
  Then
  \begin{equation} \label{eq:weight_cond1}
  	\min_{i,j} \left(\frac{v_K(a_{i,j})}{t} + 2i + 3j\right) = 6,
  \end{equation}
  and this minimum is attained for $(i,j)=(3,0),(0,2)$.
\item 
  We have $t=t_{\max}(x,y)$ if and only if the minimum in \eqref{eq:weight_cond1} is also attained for a pair $(i,j)$ with $2i+3j<6$.
\end{enumerate}	
\end{lemma}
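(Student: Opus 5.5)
The plan is to analyse, term by term, the quantity
\[
\lambda_{i,j}(t) := \frac{v_K(a_{i,j})}{t} + 2i + 3j
\]
for each monomial, sorted by the sign of \(6-2i-3j\). Throughout we use Condition \eqref{eq:admissible_condition} and Lemma \ref{lem:admissible}(ii), which guarantees that the index set \(\{(i,j): 2i+3j<6,\ a_{i,j}\neq 0\}\) is nonempty. This makes \(t_{\max}(x,y)\) a minimum over a finite nonempty set of numbers \(v_K(a_{i,j})/(6-2i-3j)\). Terms with \(a_{i,j}=0\) have \(v_K=\infty\) and are ignored throughout.

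For (i), every ratio in the minimum has positive denominator and, by \eqref{eq:admissible_condition}, positive numerator \(v_K(a_{i,j})>0\). The set of indices with \(2i+3j<6\) is finite, so the minimum of finitely many positive rationals is positive.

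For (ii), fix \(0<t\le t_{\max}\) and treat three cases.
\begin{enumerate}
\item If \(2i+3j=6\), then \(v_K(a_{i,j})\ge 0\), so \(\lambda_{i,j}(t)\ge 6\). For \((3,0)\) and \((0,2)\) the coefficients are units, so equality holds.
\item If \(2i+3j>6\), then \(\lambda_{i,j}(t)\ge 2i+3j>6\).
\item If \(2i+3j<6\), then \(t\le t_{\max}\le v_K(a_{i,j})/(6-2i-3j)\). Multiplying by \((6-2i-3j)/t>0\) gives \(v_K(a_{i,j})/t\ge 6-2i-3j\), hence \(\lambda_{i,j}(t)\ge 6\).
\end{enumerate}
So the minimum is exactly \(6\), and it is attained at \((3,0)\) and \((0,2)\).

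For (iii), note that by the case analysis the minimum can only be attained by pairs with \(2i+3j\le 6\). A pair with \(2i+3j<6\) attains it if and only if \(v_K(a_{i,j})/(6-2i-3j)=t\). If \(t=t_{\max}\), choose a pair realising the minimum defining \(t_{\max}\); it gives equality. Conversely, if some pair with \(2i+3j<6\) attains the value \(6\), then \(t=v_K(a_{i,j})/(6-2i-3j)\ge t_{\max}\), and together with \(t\le t_{\max}\) this forces equality. There is no real obstacle here; the only point needing care is the nonemptiness from Lemma \ref{lem:admissible}(ii) and the convention that vanishing coefficients are excluded.
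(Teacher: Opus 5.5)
Your proof is correct and is exactly the direct term-by-term verification from Condition \eqref{eq:admissible_condition}, with Lemma \ref{lem:admissible}(ii) ensuring that \(t_{\max}(x,y)\) is finite. The paper leaves this unwritten, saying only that the claims are ``immediately checked''; your case split by the sign of \(6-2i-3j\) is the intended argument, and nothing is missing.
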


We fix a rational number $t$ as in the lemma. After replacing $K$ by some finite extension, we may assume that there exists an element $\Pi\in K$ with $v_K(\Pi)=t$.
Let
\[
   q:\P\to\AA_{\OO_K}^2
\]
be the \emph{weighted blow-up} along the regular sequence $\Pi,x,y$, with weights
\[
w(\Pi)=1,\quad w(x)=2,\quad w(y)=3.
\]
Explicitly, $\P=\Proj(R)$, where $R$ is the graded ring 
\begin{equation} \label{eq:R_def}
R := \bigoplus_{\ell\geq 0} I_\ell T^\ell \subset A[T],
\end{equation}
with
\[
I_\ell := ( \Pi^kx^iy^j \mid k+2i+3j\geq \ell) \lhd A.
\]

We set $\tilde{\Pi}:=\Pi T$, $\tilde{x}:=xT^2$, and $\tilde{y}:=yT^3\in R$. 

\begin{lemma} \label{lem:blow-up}
\begin{enumerate}
\item
  The morphism $q:\P\to\AA_{\OO_K}^2$ is projective and an isomorphism away from $P$. 
\item 
  The three open affine subsets $D_+(\tilde{\Pi})$, $D_+(\tilde{x})$ and $D_+(\tilde{y})$ cover $\P$.
\item 
  We have $D_+(\tilde{\Pi})=\Spec A_\Pi,$ where
  \[
     A_\Pi = \OO_K[x_1,y_1]
  \] 
  is a polynomial ring with generators 
  \[
     x_1:=\frac{\tilde{x}}{\tilde{\Pi}^2} = \frac{x}{\Pi^2}, \quad y_1:=\frac{\tilde{y}}{\tilde{\Pi}^3} = \frac{y}{\Pi^3}.
  \]
\item 
  The intersection $D_+(\tilde{x})\cap D_+(\tilde{y})=\Spec A_{xy}$ is affine, and the $\OO_K$-algebra $A_{xy}$ has the presentation
  \[
     A_{xy} = \OO_K[u^{\pm 1}, r,s]/(rs-\Pi),
  \]
  where 
  \[
  u:=\frac{\tilde y^2}{\tilde x^3}=\frac{y^2}{x^3},
  \qquad
  r:=\frac{yT^2}{xT^2}=\frac yx,
  \qquad
  s:=\frac{\tilde\Pi\tilde x}{\tilde y}=\frac{\Pi x}{y}.
  \]
\end{enumerate}	
  In (iii) and (iv) we identify rational functions on $\P$ (defined as quotients of homogeneous elements of $R$ of the same degree) with the corresponding elements of $K(x,y)$, the function field of $\AA_{\OO_K}^2$.
\end{lemma}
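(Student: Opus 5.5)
The plan is to compute directly with the Rees algebra \(R=\bigoplus_\ell I_\ell T^\ell\). The key preliminary observation is that the ideals \(I_\ell\) are monomial in the three elements \(\Pi,x,y\), which form a regular sequence in \(A=\OO_K[x,y]\). Consequently every element of \(R\) is an \(\OO_K\)-combination of monomials \(\Pi^kx^iy^jT^\ell\) with \(k+2i+3j\ge\ell\), where powers of \(\Pi\) beyond the weight condition may be absorbed as coefficients.

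For (i), \(R\) is generated as an \(A\)-algebra by finitely many homogeneous elements of bounded degree, for instance \(\tilde\Pi,\tilde x,\tilde y\) together with \(xT,yT,yT^2\) and \(\Pi\)-multiples. Hence \(\Proj R\) is projective over \(A\). Away from \(P\), i.e. after inverting an element \(h\in\m\setminus\{0\}\) with \(h\in\{\Pi,x,y\}\) locally, we check that \(I_\ell A_h=A_h\) for all \(\ell\). Thus \(R_h=A_h[T]\) and \(\Proj\) is \(\Spec A_h\). On the complement of \(V(\m)\) one of \(\Pi,x,y\) is invertible, so this gives the isomorphism.

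For (ii), the irrelevant ideal \(R_+\) has radical equal to the radical of \((\tilde\Pi,\tilde x,\tilde y)\). Indeed, every monomial generator \(\Pi^kx^iy^jT^\ell\) with \(\ell\le k+2i+3j\) and \(\ell>0\) has a power lying in that ideal: a suitable power of it factors as \(\tilde\Pi^{a}\tilde x^{b}\tilde y^{c}\) times an element of \(R\) of nonnegative degree, after raising to the sixth power to clear the weights. This is the step I expect to need the most care, since one must handle generators with \(\ell<k+2i+3j\) by writing them as products of elements of the form \(\Pi T\), \(xT^{e}\) with \(e\le2\), and \(yT^{e}\) with \(e\le3\).

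For (iii), on \(D_+(\tilde\Pi)\) every degree-zero fraction \(\Pi^kx^iy^jT^\ell/\tilde\Pi^\ell\) equals \(\Pi^{k-\ell}x^iy^j=\Pi^{k+2i+3j-\ell}x_1^iy_1^j\) with a nonnegative exponent. So \(A_\Pi=\OO_K[x_1,y_1]\), and this ring is polynomial because \(x_1,y_1\) are algebraically independent over \(K\).

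For (iv), the degree-zero part of \(R_{\tilde x\tilde y}\) is computed the same way. Every monomial fraction can be rewritten as a Laurent monomial in \(\Pi,x,y\) of weighted degree \(\ge0\). The weight-zero lattice is generated by \(u,r,s\), subject to \(u=r^3s^{-?}\)-type relations; concretely, \(rs=\Pi\) and \(r^2/x\cdot\) combinations. Here we check that \(x=u^{-1}r^2\) and \(y=u^{-1}r^3\) lie in the ring, and that the map \(\OO_K[u^{\pm1},r,s]/(rs-\Pi)\to A_{xy}\) is surjective. Injectivity follows by comparing with the fraction field \(K(x,y)\), in which \(u,r\) are algebraically independent, and from the fact that \(\OO_K[u^{\pm1},r,s]/(rs-\Pi)\) is a flat domain.
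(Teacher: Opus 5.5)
Your proposal follows the paper's proof essentially step for step. It uses the same six generators $\tilde\Pi, xT, yT, \tilde x, yT^2, \tilde y$ and the localization $I_\ell A_a=A_a$ for (i), and shows that powers of generators land in $(\tilde\Pi,\tilde x,\tilde y)$ for (ii). The paper simply uses $(xT)^2=x\tilde x$, $(yT)^3=y^2\tilde y$, $(yT^2)^3=y\tilde y^2$ there, so that step is easier than you expected. For (iii) and (iv) you do the same monomial bookkeeping inside $K(x,y)$.

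The one step to repair is surjectivity in (iv). Drop the remark about a relation of the form $u=r^3s^{-?}$: no such relation exists, since the exponent vectors of $u,r,s$ in $(\Pi,x,y)$ form a basis of $\ZZ^3$, and the only relation in the ring is $rs=\Pi$. Instead write out the identity $\Pi^a x^b y^c=u^{-b-c}\,r^{a+2b+3c}\,s^a$, which follows at once from your formulas $x=u^{-1}r^2$, $y=u^{-1}r^3$, $\Pi=rs$. Every degree-zero monomial fraction has $a\ge 0$ and $a+2b+3c\ge 0$, so this identity places it in the image of $\OO_K[u^{\pm1},r,s]/(rs-\Pi)$.
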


\begin{proof}
It is an elementary exercise to check that $R$ is generated as $A$-algebra by the following six homogeneous elements:
\begin{equation} \label{eq:R_generators}
 \Pi T, xT, yT, xT^2, yT^2, yT^3.
\end{equation}
In particular, $R$ is a finitely generated graded $A$-algebra. Hence there exists $m\geq 1$ such that the graded subring $R'=\oplus_d R_{md}$ is generated by elements of degree $m$. As $\P=\Proj(R)=\Proj(R')$, \cite[Corollary II.5.16 (b)]{HartshorneAG} shows that $q$ is projective. 

For $a\in\{\Pi,x,y\}$, set $D(a)=\Spec A_a$, with $A_a:=A[a^{-1}]$. Then
\[
q^{-1}(D(a)) = \Proj R_a, \quad R_a= \oplus_{\ell\geq 0} I_\ell A_a.
\]
But any sufficiently high power of $a$ has weight $\geq \ell$ and is invertible in $A_a$, and so $I_\ell A_a=A_a$. This shows that $q^{-1}(D(a))\cong D(a)$. We conclude that $q$ is an isomorphism over
\[
\AA_{\OO_K}^2\backslash\{P\} = D(\Pi)\cup D(x)\cup D(y).
\]

To prove (ii), assume $\p\in\Proj(R)$ contains all three elements $\tilde{\Pi},\tilde{x},\tilde{y}$. Then $\p$ contains all six generators from \eqref{eq:R_generators}, because
\[
   (xT)^2 = x\tilde{x},\; (yT)^3 = y^2\tilde{y},\; (yT^2)^3 = y\tilde{y}^2
\]
all lie in $\p$. It follows that $R_+\subset\p$, contradiction. 

For (iii), we compute
\[
A_\Pi=(R_{\tilde\Pi})_0\subset K(x,y).
\]
Since \(\deg(\tilde\Pi)=1\), every homogeneous element of \(A_\Pi\) is a sum of
monomials of the form
\[
\frac{\Pi^kx^iy^jT^n}{(\Pi T)^n}
=
\frac{\Pi^kx^iy^j}{\Pi^n},
\qquad
k+2i+3j\ge n.
\]
With
\[
x_1:=\frac{x}{\Pi^2},
\qquad
y_1:=\frac{y}{\Pi^3},
\]
this monomial becomes
\[
\Pi^{k+2i+3j-n}x_1^iy_1^j,
\]
and the exponent \(k+2i+3j-n\) is nonnegative. Hence
\[
A_\Pi\subset \OO_K[x_1,y_1].
\]
The reverse inclusion follows from
\[
x_1=\frac{\tilde x}{\tilde\Pi^2},
\qquad
y_1=\frac{\tilde y}{\tilde\Pi^3}.
\]
Thus
\[
D_+(\tilde\Pi)=\Spec \OO_K[x_1,y_1].
\]

For (iv), set
\[
   A_{xy}:=(R_{\tilde x\tilde y})_0\subset K(x,y).
\]
Then \(u,u^{-1},r,s\in A_{xy}\), and these elements satisfy
\[
   rs=\Pi.
\]
This gives an \(\OO_K\)-algebra homomorphism
\[
\OO_K[u^{\pm1},r,s]/(rs-\Pi)\longrightarrow A_{xy}.
\]
We show that it is an isomorphism. Injectivity is clear if we view both sides as subrings of $K(x,y)$. A monomial in \(A_{xy}\) is of the form
\[
\Pi^a x^b y^c,
\qquad
a\ge0,\quad b,c\in\ZZ,\quad a+2b+3c\ge0.
\]
Indeed, this follows directly from localizing at
\(\tilde x\tilde y=xyT^5\). Put
\[
    n:=a+2b+3c\geq 0, \qquad m:=-b-c.
\]
Then
\[
     \Pi^a x^b y^c = u^m r^n s^a.
\]
Hence \(u^{\pm1},r,s\) generate \(A_{xy}\), so the homomorphism above is an isomorphism.
\end{proof}

\begin{remark} \label{rem:weighted-blowup-references}
 The phrase ``weighted blow-up'' is used with slightly different conventions in the literature. In this paper it refers to the Proj of the weighted-order Rees algebra \eqref{eq:R_def}. This is the normal toric version of the weighted blow-up with weights \(1,2,3\). Since the available references either treat a more general framework or impose extra hypotheses not needed here, we have included the elementary affine-chart computations. Compare, for instance, \cite[Definition~2.2 and Proposition~2.4]{AndreattaLiftingWBU}.
\end{remark}

\subsection{The candidate resolution}
\label{subsec:candidate_resolution}

We define \(\X\subset\P\) as the schematic closure of \(q^{-1}(X)\). Since \(\X_0\) is the schematic closure of \(X\) in \(\AA^2_{\OO_K}\), the restriction of \(q\) induces a morphism
\[
   \X\to\X_0.
\]
This is the weighted blow-up of \(\X_0\) attached to the chosen strictly admissible coordinates \((x,y)\) and the parameter \(t\). It is our candidate for the stable resolution of the cusp \(P\).

\begin{proposition} \label{prop:cusp_resolution}
	\begin{enumerate}
	\item 
	  \(\X\to\X_0\) is projective and is an isomorphism away from \(P\). Moreover,
	  \(\X\) is flat over \(\OO_K\), with generic fiber \(X\).
	\item
	  Let $\Xb_0\subset\X_s$ denote the strict transform of $\X_{0,s}$. The natural map $\Xb_0\to\X_{0,s}$ is the desingularization of $\X_{0,s}$ at the cusp $P$.
	\item
	  Let $\Xb_1\subset\X_s$ be the exceptional fiber of $q$. Then $\Xb_1$ is an integral curve over $k$ of arithmetic genus one.
	\item
	  The curve $\Xb_1$ intersects $\Xb_0$ precisely in $\tilde{P}$, the inverse image of $P$. This is a smooth point of $\Xb_1$.
	\item 
	  The point $\tilde{P}$ is a node of $\X_s$, and the
	  parameter $t$ is equal to its thickness\footnote{If $\tilde{P}$ is a node of $\X_s$, then the complete local ring of $\X$ at $\tilde{P}$ has the form  $\widehat{\OO}_{\X,\tilde{P}}\cong\OO_K[[u,v]]/(uv-a)$, with $a\in\m_K$; the {\em thickness} of the node is defined as $v_K(a)$.}  in $\X$.
	\end{enumerate}
\end{proposition}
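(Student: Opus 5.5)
The plan is to compute everything on the three affine charts of Lemma \ref{lem:blow-up}. Part (i) follows from Lemma \ref{lem:blow-up} (i): \(\X\) is a closed subscheme of \(\P\), so \(\X\to\X_0\) is projective. Over the complement of \(P\), \(q\) is an isomorphism and \(\X_0\) is the closure of \(X\), so \(\X\to\X_0\) is an isomorphism there. Flatness holds because \(\X\) is a schematic closure of its generic fiber, so it has no \(\Pi\)-torsion; the generic fiber is \(q^{-1}(X)\cong X\), since \(P\) lies in the special fiber.

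For the chart \(D_+(\tilde\Pi)=\Spec\OO_K[x_1,y_1]\), substitute \(x=\Pi^2x_1\), \(y=\Pi^3y_1\). This gives \(f=\sum a_{i,j}\Pi^{2i+3j}x_1^iy_1^j\). By Lemma \ref{lem:t_max}(ii) every term is divisible by \(\Pi^6\), and we set \(f_1:=\Pi^{-6}f\). The restriction of \(\X\) to this chart is then \(V(f_1)\), provided \(f_1\) is primitive and irreducible over \(K\), because the closure of \(X\) is cut out by the primitive generator. Its reduction is
\[
\bar f_1=\sum \bar a_{i,j}x_1^iy_1^j,\qquad \bar a_{i,j}=\overline{\Pi^{2i+3j-6}a_{i,j}}.
\]
The terms with \(2i+3j>6\) die. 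The terms \(y_1^2-x_1^3\) survive by strict admissibility, so \(\bar f_1\) is a monic-type Weierstrass cubic. After renaming signs, it has the form
\[
y^2+\bar a_{1,1}xy+\bar a_{0,1}y=x^3-\bar a_{2,0}x^2-\ldots
\]
Thus the part of \(\Xb_1\) in this chart is an affine Weierstrass cubic, and it is integral because \(y^2-g(x)\)-type Weierstrass polynomials are irreducible. We must also show that \(\Xb_1\) meets the other charts only in one extra point, which will be \(\tilde P\). In \(D_+(\tilde x)\cap D_+(\tilde y)\) the exceptional locus is \(\{\Pi=0\}\). The equation there, divided by the appropriate monomial \(x^3\), becomes \(u-1+(\text{terms in } r,s)\). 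The weighted-degree \(>6\) terms carry positive powers of \(r\), and the terms of weighted degree \(<6\) carry positive powers of \(s\).

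So the special fiber near \(\tilde P\) is \(V(rs)\cap V(u-1+\ldots)\). The branch \(s=0\) is \(\Xb_1\), which is the point at infinity \(O\) of the cubic. The branch \(r=0\) is \(\Xb_0\).

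Next I would check what \(D_+(\tilde x)\) and \(D_+(\tilde y)\) separately add. These are cyclic-quotient charts, namely quotients of \(\AA^3\) by \(\mu_2\) and \(\mu_3\). I expect the strict transform to avoid the singular points of these charts, since \(y^2-x^3\) forces both \(\tilde x,\tilde y\neq0\) on the exceptional locus away from \(\tilde\Pi\neq0\). Hence \(\Xb_1\) is the projective closure of the affine cubic in the weighted projective plane \(\PP(1,2,3)\). That closure is exactly the Weierstrass cubic, completed by the single point \([0:1:1]\), and this gives (iii).

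For (ii), \(\Xb_0\) is obtained from the chart \(r=y/x\), \(s=0\), where \(x=r^{-2}u^{\ldots}\); this is the standard normalization of \(y^2=x^3\) by \(r=y/x\). For (iv), the point \(\tilde P\) is the smooth point \(O\) of the Weierstrass cubic.

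For (v), I would complete at \(\tilde P\). The equation \(u-1+\ldots=0\) can be solved for \(u\) by Hensel's lemma, since its \(u\)-derivative is a unit. This gives
\[
\widehat\OO_{\X,\tilde P}\cong\OO_K[[r,s]]/(rs-\Pi),
\]
which is a node of thickness \(v_K(\Pi)=t\).

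The main obstacle is the bookkeeping of the schematic closure. One has to verify that \(\Pi^{-6}f\) (respectively \(x^{-3}f\)) generates the ideal of \(\X\) on each chart, i.e. that no further \(\Pi\)-torsion must be removed; this reduces to primitivity plus the irreducibility of the reduced cubic. One also has to verify that the quotient-singular points of \(D_+(\tilde x)\) and \(D_+(\tilde y)\) do not meet \(\X\).
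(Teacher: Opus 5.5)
Your plan follows the paper's proof. It uses the same two charts \(D_+(\tilde\Pi)\) and \(D_+(\tilde x)\cap D_+(\tilde y)\), the same substitution plus primitivity argument giving the Weierstrass cubic, and the same implicit-function-theorem step giving \(\widehat\OO_{\X,\tilde P}\cong\OO_K[[r,s]]/(rs-\Pi)\). There is one slip to fix.

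\textbf{Swapped branches.} On the overlap chart the exceptional locus is not \(\{\Pi=0\}\); that set is the whole special fiber \(\{rs=0\}\). The exceptional locus is \(\{r=0\}\), because \(x=u^{-1}r^2\) and \(y=u^{-1}r^3\) vanish exactly there. So the branch \(r=0\) is \(\Xb_1\), with local parameter \(s=\Pi x/y=x_1/y_1\). The branch \(s=0\) is \(\Xb_0\), with parameter \(r=y/x\). This is the opposite of what you first assert. It is, however, what you correctly use in (ii), where you should also write \(x=u^{-1}r^2\) rather than \(r^{-2}\).

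\textbf{Minor remarks.}
\begin{enumerate}
\item Irreducibility of \(f_1\) over \(K\) is not needed. A uniformizer \(\pi\) is prime in \(\OO_K[x_1,y_1]\), so primitivity alone makes \(\OO_K[x_1,y_1]/(f_1)\) torsion free.
\item Your extra check that \(\X\) misses the points \([0:1:0]\) and \([0:0:1]\) of the exceptional \(\PP(1,2,3)\) is worthwhile. At these points \(fT^6/\tilde x^3\), resp.\ \(fT^6/\tilde y^2\), reduces to \(-1\), resp.\ \(1\). The paper leaves this implicit when it identifies \(\Xb_1\) with the projective Weierstrass cubic.
\end{enumerate}
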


The proof of (iii) will reveal an explicit equation for the component $\Xb_1$ as a cubic in Weierstrass normal form, see \eqref{eq:exceptional_curve_equation}. 

We note the following immediate consequence of the proposition.

\begin{corollary} \label{cor:cusp_resolution}
	If $\Xb_1$ is semistable then $\X\to\X_0$ is the stable resolution of the cusp $P$.
\end{corollary}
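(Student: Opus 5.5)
The plan is to check the defining properties of a stable resolution from \S\ref{subsec:stable_resolution} one at a time, each against the corresponding part of Proposition~\ref{prop:cusp_resolution}. The only new input is the hypothesis that \(\Xb_1\) is semistable, and it is used only in the final stability check.

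\emph{The morphism is a modification.} By Proposition~\ref{prop:cusp_resolution}(i), \(\X\to\X_0\) is projective and an isomorphism away from \(P\). Moreover \(\X\) is flat over \(\OO_K\) with generic fiber \(X\), so the morphism is a modification of \(\X_0\) that is an isomorphism away from \(P\).

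\emph{The special fiber has the required shape.} Since the map is an isomorphism over \(\X_{0,s}\setminus\{P\}\), the special fiber \(\X_s\) is the union of \(\Xb_0\) (the closure of the preimage of \(\X_{0,s}\setminus\{P\}\)) and the fiber over \(P\). By Proposition~\ref{prop:cusp_resolution}(iii), the fiber over \(P\) is the integral curve \(\Xb_1=:E\). So \(P\) is replaced by the single exceptional divisor \(E\).

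\emph{Transversality and smoothness of the marked point.} By (iv), \(\Xb_0\cap\Xb_1=\{\tilde P\}\), so there is exactly one marked point, \(r=1\), and \(\tilde P\) is smooth on \(\Xb_1\). By (ii), \(\Xb_0\) is the desingularization of \(\X_{0,s}\) at \(P\), so \(\tilde P\) is also a smooth point of \(\Xb_0\). By (v), \(\tilde P\) is an ordinary node of \(\X_s\). A node lying on two distinct components, each smooth at that point, is exactly a transverse intersection of the two branches. Hence the strict transform meets \(E\) transversely in the smooth point \(\tilde P\).

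\emph{Stability of the tail.} It remains to show that \((E,\{\tilde P\})\) is stably marked in the sense of \cite[Definition 1.1]{SSW}. The proof of Proposition~\ref{prop:cusp_resolution}(iii) presents \(E\) as a cubic in Weierstrass normal form with \(\tilde P\) corresponding to \(O=[0:1:0]\). By hypothesis this cubic is semistable, i.e.\ smooth or nodal. As remarked after Definition~\ref{def:WNF}, \((E,\tilde P)\) is then a \(1\)-pointed stable curve of genus one. To see this directly:
\begin{itemize}
\item If \(E\) is smooth, it is an elliptic curve with one marking, and \(2g-2+n=1>0\).
\item If \(E\) is nodal, its normalization is \(\PP^1\) carrying three special points: the two preimages of the node and \(\tilde P\).
\end{itemize}
In either case every component satisfies the stability inequality and \(\operatorname{Aut}(E,\tilde P)\) is finite, which is the stably marked condition.

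I expect no step to be a real obstacle. The only point requiring care is that the conditions of \cite[Definition 1.1]{SSW} match the notion of a \(1\)-pointed stable genus-one curve. In particular, irreducibility of \(E\) rules out extra rational components, and smoothness of \(\tilde P\) on \(E\), given by part (iv), rules out a marking at the node.
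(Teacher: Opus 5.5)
Your proof is correct and follows the paper's argument. The paper likewise reads off the modification, the single transverse node at \(\tilde P\), and the facts that \(\Xb_1\) is integral of arithmetic genus one with \(\tilde P\) a smooth point from Proposition~\ref{prop:cusp_resolution}, and then observes that a semistable Weierstrass cubic marked at its point at infinity is a stable one-pointed genus-one curve; you just spell out each defining condition in more detail than the paper's one-line justification.
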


Indeed, \(\Xb_1\) is irreducible, has arithmetic genus one and \(\tilde P\) is a smooth point
on it; hence \((\Xb_1,\tilde P)\) is a stable one-pointed curve of genus one if $\Xb_1$ is semistable.

In order to check whether $\Xb_1$ is semistable, there are three cases to consider: $\Xb_1$ may be smooth, or have exactly one node, or exactly one cusp. In the first two cases, $\Xb_1$ is semistable, and then $\X\to\X_0$ is the resolution of the cusp $P\in\X_{0,s}$. In Section \ref{sec:existence_resolution} we will show that, after a finite extension of $K$ and a careful choice of the admissible coordinate system $(x,y)$, we can force this to be true.

\begin{proof}[Proof of Proposition \ref{prop:cusp_resolution}]
	\emph{(i)}
	Since \(\X\) is the schematic closure of the generic fiber in \(\P\), it is \(\OO_K\)-torsion free, and so \(\X\) is
	flat over \(\OO_K\). The morphism \(\X\to\X_0\) is projective and an isomorphism
	away from \(P\), because the same is true for \(q:\P\to\AA^2_{\OO_K}\) by Lemma \ref{lem:blow-up} (i).
	
	\medskip
	We now use the affine charts from Lemma \ref{lem:blow-up} (iii) and (iv). Put
	\[
	w_{i,j}:=2i+3j,
	\qquad
	b_{i,j}:=\frac{a_{i,j}}{\Pi^{6-w_{i,j}}}\in\OO_K
	\quad\text{for }w_{i,j}<6.
	\]
	The integrality of \(b_{i,j}\) follows from the assumption \(t\leq t_{\max}(x,y)\) and the definition of $t_{\max}(x,y)$, see \eqref{eq:t_def}.
	
	On \(D_+(\tilde\Pi)=\Spec\OO_K[x_1,y_1]\) we have
	\[
	f(\Pi^2x_1,\Pi^3y_1)=\Pi^6F(x_1,y_1),
	\]
	where
	\[
	F
	=
	\sum_{w_{i,j}<6} b_{i,j}x_1^iy_1^j
	+
	\sum_{w_{i,j}\ge6} a_{i,j}\Pi^{w_{i,j}-6}x_1^iy_1^j
	\in\OO_K[x_1,y_1].
	\]
	As $v_K(a_{i,j})=0$ for $(i,j)=(3,0),(0,2)$ by \eqref{eq:admissible_condition}, $F$ is primitive. 
	Thus \(\X\cap D_+(\tilde\Pi)\) is given by \(F=0\). Reducing modulo
	\(\m_K\), we obtain the affine Weierstrass equation
	\begin{equation} \label{eq:exceptional_curve_equation}
		y_1^2+\bar a_1x_1y_1+\bar a_3y_1
		=
		x_1^3+\bar a_2x_1^2+\bar a_4x_1+\bar a_6,
	\end{equation}
	with
	\[
	\bar a_1:=\bar b_{1,1},\quad
	\bar a_2:=-\bar b_{2,0},\quad
	\bar a_3:=\bar b_{0,1},\quad
	\bar a_4:=-\bar b_{1,0},\quad
	\bar a_6:=-\bar b_{0,0}.
	\]
	Let \(\bar X_1\) be the closure of this affine curve in \(\X_s\).
	
	Next consider the overlap
	\[
	D_+(\tilde x)\cap D_+(\tilde y)
	=
	\Spec \OO_K[u^{\pm1},r,s]/(rs-\Pi),
	\]
	from Lemma \ref{lem:blow-up} (iv).
	On this chart we have
	\[
	x=u^{-1}r^2,\qquad y=u^{-1}r^3,\qquad \Pi=rs.
	\]
	Substituting into \(f\), one obtains
	\[
	f=r^6G(u,r,s),
	\]
	where
	\[
	G
	=
	\sum_{w_{i,j}<6}
	b_{i,j}u^{-i-j}s^{6-w_{i,j}}
	+
	\sum_{w_{i,j}\ge6}
	a_{i,j}u^{-i-j}r^{w_{i,j}-6}.
	\]
	Hence \(\X\) is given on this chart by \(G=0\). Modulo \(\m_K\), the two
	weight-six terms \(y^2\) and \(-x^3\) give
	\[
	\bar G
	=
	u^{-2}-u^{-3}
	+rA+sB
	=
	u^{-3}(u-1)+rA+sB
	\]
	for suitable \(A,B\in k[u^{\pm1},r,s]/(rs)\). In particular,
	\[
	\frac{\partial \bar G}{\partial u}(1,0,0)=1.
	\]
	Let \(\tilde P\) be the point \(u=1,r=s=0\). By the formal implicit function
	theorem, \(u\) can be eliminated in the completed local ring at \(\tilde P\), and
	we get
	\[
	\widehat{\OO}_{\X,\tilde P}
	\cong
	\OO_K[[r,s]]/(rs-\Pi).
	\]
	After reducing modulo \(\m_K\),
	\[
	\widehat{\OO}_{\X_s,\tilde P}
	\cong
	k[[r,s]]/(rs).
	\]
	Thus the two local branches of \(\X_s\) at \(\tilde P\) are \(s=0\) and \(r=0\).
	
	\emph{(ii)}
	The branch \(s=0\) is the strict transform \(\bar X_0\). Indeed, away from
	\(\tilde P\) it maps isomorphically to \(\X_{0,s}\setminus\{P\}\), and near
	\(\tilde P\) the map to \(\X_{0,s}\) is given by
	\[
	x=u^{-1}r^2,\qquad y=u^{-1}r^3,
	\]
	with \(u(0)=1\). Hence \(r=y/x\) is a local parameter on the normalization of the
	cusp. Therefore \(\bar X_0\to\X_{0,s}\) is the desingularization at \(P\).
	
	\emph{(iii)+(iv)}
	The curve \(\bar X_1\) is the projective Weierstrass cubic obtained from
	\eqref{eq:exceptional_curve_equation}. Its unique point at infinity is
	\[
	\tilde P=[0:1:1]\in\PP_k(1,2,3).
	\]
	This point is smooth, with local parameter
	\[
	\frac{x_1}{y_1}
	=
	\frac{\tilde\Pi\tilde x}{\tilde y}
	=
	\frac{\Pi x}{y}
	=
	s.
	\]
	On the chart \(D_+(\tilde x)\cap D_+(\tilde y)\), the branch \(r=0\) is therefore
	precisely the local branch of \(\bar X_1\) at \(\tilde P\). Since the point at
	infinity is the only point of the projective Weierstrass cubic with
	\(\tilde\Pi=0\), we have
	\[
	\bar X_0\cap\bar X_1=\{\tilde P\}.
	\]
	Moreover \(\tilde P\) is a smooth point of \(\bar X_1\).
	
	The projective Weierstrass cubic has arithmetic genus one. Since it has a unique
	smooth point at infinity, it cannot be reducible: every projective component
	would meet the line at infinity. Hence \(\bar X_1\) is integral of arithmetic genus
	one.
	
	\emph{(v)}
	The completed local ring computed above gives
	\[
	\widehat{\OO}_{\X,\tilde P}
	\cong
	\OO_K[[r,s]]/(rs-\Pi).
	\]
	Therefore \(\tilde P\) is an ordinary node of \(\X_s\), and its thickness is
	\[
	v_K(\Pi)=t.
	\]
	This proves the proposition.
\end{proof}

\begin{remark} \label{rem:t_max}
	The definition of $t_{\max}(x,y)$ and of the coefficients $\bar{a}_i$ in \eqref{eq:exceptional_curve_equation}, together with Lemma \ref{lem:t_max} (iii), immediately imply the following statement:
	\begin{quotation}
		There exists $i\in\{1,2,3,4,6\}$ with $\bar{a}_i\neq 0$ if and only if $t=t_{\max}(x,y)$. 
	\end{quotation}
	In particular, if $t<t_{\max}(x,y)$ then $\Xb_1$ is the cuspidal curve $y^2=x^3$, which is not semistable, and then $\X\to\X_0$ is not a resolution of the cusp $P$.
\end{remark}

\section{Existence of stable cusp resolution} \label{sec:existence_resolution}

In this section we give a proof of Theorem \ref{thm:cusp_resolution}.

\subsection{Optimal coordinates} \label{subsec:optimal_coordinates}

The modification $\X\to\X_0$ constructed in \S \ref{sec:blow-up} depends on the choice of an admissible coordinate system $(x,y)$ and on the parameter $t$, a positive rational number $\leq t_{\max}(x,y)$ (where $t_{\max}(x,y)$ is defined by \eqref{eq:t_def}). To prove Theorem \ref{thm:cusp_resolution} we have to find $(x,y)$ and $t$ such that the exceptional fiber of the blow-up  $\X\to\X_0$ is semistable (Corollary \ref{cor:cusp_resolution}). By Remark \ref{rem:t_max} we must choose $t=t_{\max}$. 

The following definition gives the right condition on the coordinate system $(x,y)$. 

\begin{definition} \label{def:optimal}
	An admissible coordinate system $(x,y)$ is called {\em optimal} if the rational value $t_{\max}(x,y)$ takes its maximal value, among all admissible coordinate systems after allowing arbitrary finite extensions of $K$.
\end{definition} 

By Remark \ref{rem:scaling}, every admissible
coordinate system can be made strictly admissible by multiplying \(x\), \(y\),
and \(f\) by units. This operation does not change \(t_{\max}\), nor the resulting
weighted blow-up up to the evident isomorphism. We may therefore always assume that admissible coordinates are, in fact, strictly admissible.

Theorem \ref{thm:cusp_resolution} follows immediately from the following two lemmata.

\begin{lemma} \label{lem:optimal_iff_resolution}
	If an admissible coordinate system $(x,y)$ is optimal then the blow-up morphism $\X((x,y),t)\to\X_0$ with parameter $t=t_{\max}(x,y)$ is the stable resolution of the cusp $P$.
\end{lemma}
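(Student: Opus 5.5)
By Corollary \ref{cor:cusp_resolution} it suffices to show that the exceptional fiber \(\Xb_1\) of \(\X((x,y),t_{\max})\) is semistable. We argue by contraposition: assuming \(\Xb_1\) is cuspidal, we build a new admissible coordinate system \((x',y')\), after a finite extension of \(K\), with \(t_{\max}(x',y')>t_{\max}(x,y)\). This contradicts optimality.

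\emph{Step 1 (normal form for the cusp).} Suppose \(\Xb_1\) is cuspidal. By Proposition \ref{prop:RWNF} there is \(g\in U\) with \(\lexp{g}{\Xb_1}\) in weak rigidified form. Part (ii) then forces \(\lexp{g}{\Xb_1}\) to be \(y^2=x^3\). Here \(g\) corresponds to a substitution
\[
x_1\leftarrow x_1+\bar\alpha,\qquad y_1\leftarrow y_1+\bar\gamma x_1+\bar\beta,
\]
with \(\bar\alpha,\bar\beta,\bar\gamma\in k\). Choose lifts \(\alpha,\beta,\gamma\in\OO_K\) and define new affine coordinates on \(\AA^2_{\OO_K}\) by
\[
x\leftarrow x+\Pi^2\alpha,\qquad y\leftarrow y+\Pi\gamma x+\Pi^3\beta.
\]
In the chart \(D_+(\tilde\Pi)\) this is exactly the lift of \(g\) in the variables \(x_1=x/\Pi^2\), \(y_1=y/\Pi^3\).

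\emph{Step 2 (the new coordinates are admissible and the new cubic is cuspidal).} The substitution respects the \((1,2,3)\)-weight filtration: each new term \(\Pi^2\alpha\), \(\Pi\gamma x\), \(\Pi^3\beta\) has total weight equal to that of the variable it modifies. Hence the new coordinates are still admissible. Indeed, \(x\) and \(y\) change only by multiples of \(\Pi\), so the reduction \(\bar f\) is unchanged and the units \(a_{3,0},a_{0,2}\) stay units. Because the change respects weights, the new coefficients again satisfy
\[
\min_{i,j}\Bigl(v_K(a'_{i,j})/t+2i+3j\Bigr)\ge 6,
\]
with equality at \((3,0)\) and \((0,2)\). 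By Lemma \ref{lem:t_max}, \(t:=t_{\max}(x,y)\le t_{\max}(x',y')\). The reduced polynomial \(\bar F'\) on the chart \(D_+(\tilde\Pi)\) is \(\lexp{g}{\bar F}\), that is \(y_1^2-x_1^3\). By Remark \ref{rem:t_max}, every \(\bar a_i'\) vanishes, which means the minimum is not attained for any \(w_{i,j}<6\). Therefore \(v_K(a'_{i,j})>(6-w_{i,j})\,t\) for all \(w_{i,j}<6\).

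\emph{Step 3 (strict increase).} Since there are finitely many pairs with \(w_{i,j}<6\) and the valuations are discrete, the inequalities of Step 2 give \(t_{\max}(x',y')>t\). This contradicts the optimality of \((x,y)\).

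The main obstacle is the bookkeeping in Step 2. We must verify that the lifted substitution really induces \(g\) on the reduction of \(F\). We also need the weight-\(\ge 6\) monomials to contribute nothing new in weight \(<6\), which follows because \(x\) and \(y\) are replaced by expressions of the same or higher weight. We will also check that the coordinates stay centered at \(P\); this holds because the shifts are divisible by \(\Pi\).
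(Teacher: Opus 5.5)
Your proposal is correct and is essentially the paper's proof. You assume $\Xb_1$ is cuspidal, use Proposition \ref{prop:RWNF} to find the unipotent change bringing it to $y_1^2=x_1^3$, and lift it to $x\leftarrow x+\Pi^2\delta$, $y\leftarrow y+\Pi\zeta x+\Pi^3\epsilon$; this gives $t_{\max}(x',y')\ge t$, then strict inequality, contradicting optimality.

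The only difference is cosmetic. Where you argue via the weight filtration, the paper uses the chart identity $F'=F(x_1+\delta,\,y_1+\zeta x_1+\epsilon)$, which makes both the integrality of the new $B_{i,j}$ and the formula $\bar F'=\bar F(x_1+\bar\delta,\,y_1+\bar\zeta x_1+\bar\epsilon)$ immediate.
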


\begin{lemma} \label{lem:optimal_cs_exists}
	After replacing $K$ by a finite extension, there exists an optimal coordinate system.  	
\end{lemma}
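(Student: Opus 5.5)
The plan is to prove two things. First, \(t_{\max}(x,y)\) is bounded above uniformly over all admissible coordinate systems, defined over all finite extensions of \(K\). Second, the supremum is attained at coordinates defined over some finite extension. Throughout we normalize \(v_K\) on \(\bar K\) so that it extends \(v_K\), and we assume coordinates are strictly admissible (Remark \ref{rem:scaling}).

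\emph{Step 1 (a uniform bound).} Let \(\widehat{A}:=\OO_K[[x,y]]\) be the completion at \(P\). We will show that the quotient
\[
Q:=\widehat A/(f,\partial f/\partial x,\partial f/\partial y)
\]
is killed by \(\pi_K^N\) for some \(N\). The argument is the one in Lemma \ref{lem:admissible} (ii), run in the residue disc of \(P\). By the hypotheses on \(\U_0\), the generic fibre \(X\) has no singular point specializing to \(P\). Hence \(Q\), which is finite over \(\OO_K\), has empty generic fibre. So there are \(a,b,c\in\widehat A\) with
\[
\pi_K^N=a f+b f_x+c f_y .
\]
This relation is stable under base change to any finite extension \(L/K\) and under affine coordinate changes over \(\OO_L\) preserving \(P\). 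The number \(N\) does not change, since it is measured with \(v_K\).

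Now take admissible coordinates with \(t=t_{\max}(x,y)\) and \(v(\Pi)=t\), and substitute \(x=\Pi^2x_1\), \(y=\Pi^3y_1\). As in the proof of Proposition \ref{prop:cusp_resolution} we have \(f=\Pi^6F\), \(f_x=\Pi^4F_{x_1}\) and \(f_y=\Pi^3F_{y_1}\). The series \(a,b,c\) evaluated at \((\Pi^2x_1,\Pi^3y_1)\) remain integral; they lie in the Tate algebra \(\OO_L\langle x_1,y_1\rangle\). Hence \(\pi_K^N\in\Pi^3\,\OO_L\langle x_1,y_1\rangle\), which forces \(3t\le N\). So \(t_{\max}\le N/3\) for all admissible coordinates.

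\emph{Step 2 (reduction to a compact parameter space).} Fix one strictly admissible system \((x_0,y_0)\) over \(K\). Every other admissible system is obtained from it by \(x\leftarrow\alpha+ax+by\), \(y\leftarrow\beta+cx+dy\) with entries in \(\OO_{\bar K}\). The conditions \eqref{eq:admissible_condition} translate into \(v(\alpha),v(\beta),v(b)>0\) and \(a,d\) units. The unit part can be absorbed using Remark \ref{rem:scaling} and an \(\OO_{\bar K}\)-point of the reduction of the stabilizer of the cusp form. It therefore suffices to consider the parameters \((\alpha,\beta,b,c)\) in a closed polydisc \(D\) over \(K\).

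The new coefficients \(a_{i,j}(\alpha,\beta,b,c)\) for \(2i+3j<6\) are polynomials with \(\OO_K\)-coefficients. On the admissible locus,
\[
t_{\max}=\min_{2i+3j<6}\frac{v(a_{i,j})}{6-2i-3j}.
\]
This is a minimum of finitely many functions of the form \(v(P)/c\) with \(P\) a polynomial.

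\emph{Step 3 (attainment; the main obstacle).} We must show that the supremum of this function over \(D(\bar K)\), which is finite by Step 1, is attained at a \(\bar K\)-point. We expect this to be the hardest step, because the value group of \(\bar K\) is dense and an iterative improvement need not terminate.

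We will first try a Berkovich argument. The function \(\phi=\min_k v(P_k)/c_k\) extends continuously to the Berkovich polydisc \(D^{\rm an}\), which is compact, so \(\phi\) attains its maximum there. By the maximum principle for \(|P_k|\) and the piecewise-linear structure of \(\phi\) along skeleta, the maximum is attained at a type-2 point \(\xi\) with rational radii. Such a \(\xi\) is the Gauss point of a polydisc centred at a \(\bar K\)-point. At a type-2 point, \(v(P_k(\xi))\) equals \(v(P_k(z))\) for every classical point \(z\) in the polydisc lying outside finitely many residue classes. These are the classes where the reduction of some suitably normalized \(P_k\) vanishes. Choosing such a \(z\) defined over a finite extension gives \(\phi(z)=\phi(\xi)=\max\phi\), which yields optimal coordinates.

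If this route proves awkward, the fallback is to use Lemma \ref{lem:optimal_iff_resolution}-type reasoning. When the exceptional cubic is cuspidal, Proposition \ref{prop:RWNF} (ii) provides a Weierstrass change that lifts to a coordinate change strictly raising \(t_{\max}\). One would then show that the attainable values lie in \(\frac{1}{m}\ZZ\cdot v_K(\pi_K)\) for a bounded denominator \(m\), controlled by the degrees of the \(P_k\), so that the increasing sequence stops.
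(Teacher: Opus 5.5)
\textbf{Step 1 is correct.} From $\pi_K^N\in(f,f_x,f_y)\,\OO_K[[x,y]]$ and the rescaling $f=\Pi^6F$, $f_x=\Pi^4F_{x_1}$, $f_y=\Pi^3F_{y_1}$, you get an explicit uniform bound $t_{\max}\le N/3$ over all admissible systems. The paper does not prove such a bound. It only shows finiteness pointwise (Lemma~\ref{lem:t_max_admissible}) and leaves boundedness and attainment to \cite[Lemma 5.1]{SternWewers}.

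There is a slip in Step~2. Under $y\leftarrow\beta+cx+dy$ the tangent cone becomes $(\bar c x+\bar d y)^2$, so admissibility forces $v(c)>0$, not $v(b)>0$.

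\textbf{The genuine gap is the parameter space.} Admissibility is given by the strict inequalities $v(\alpha),v(\beta),v(c)>0$. So the admissible locus is open in those directions, and its Berkovich space is not compact; it is not a closed polydisc $D$.

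Suppose you take $D$ to be the closed unit polydisc instead. Then $D$ also contains changes centring the coordinates at points $Q\neq P$ of $\X_{0,s}$, about which the hypotheses say nothing. For example, $\X_{0,s}$ may have a second cusp, as in the quartic application, where $\phi$ can exceed every value attained near $P$. Or the generic fibre may have a singular point reducing to $Q$, where $\phi$ can be $\infty$. Your Step~1 relation lives in the completion at $P$ and does not transport to such translations. So ``finite by Step~1'' is unjustified on $D$, and a maximiser on $D$ need not be admissible at $P$.

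What is missing is an affinoid containing the admissible locus, on which $\phi$ is finite and on which $\phi>0$ forces admissibility. The paper supplies exactly this with the rational domain $V=\{v_g(C_{0,0})=0\}$, where the new origin reduces into $D(\bar h)$, together with Lemma~\ref{lem:t_max_admissible}. That lemma uses two facts: $P$ is the only singular point of $\X_{0,s}$ in $D(\bar h)$, and the tangent-cone/intersection-multiplicity argument gives $v(A_{0,2})=v(A_{3,0})=0$. Since $\phi(1)>0$, the maximum over $V$ then automatically sits at an admissible point. Alternatively, you would have to prove directly that $\{\phi\ge\phi(1)\}$ lies in a closed polydisc of radius $<1$.

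\textbf{Step~3 is asserted rather than proved, and part of it is false in dimension $\ge 2$.} A type-2 (divisorial) point need not be the Gauss point of a polydisc in your coordinates. An example is the Shilov point of $\{|z_1|\le 1,\ |z_2-z_1^2|\le|\pi|\}$.

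What you actually need is only that $m:=\max\phi$ is rational. Then $\{\phi\ge m\}$ is a nonempty Weierstrass domain, hence has a classical point, and that point gives optimal coordinates over a finite extension. This rationality, equivalently attainment of the maximum at a classical point, is the real content of the step. The paper imports it from \cite[Lemma 5.1]{SternWewers}. ``Piecewise linearity along skeleta'' points to nontrivial structure results; it is not a proof.

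Your fallback does not close the gap either. Iterated improvement passes to fields with unbounded ramification, so there is no a priori bound on the denominators of the values of $t_{\max}$.
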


The rest of this section is concerned with proving these two lemmata.

\subsection{Proof of Lemma \ref{lem:optimal_iff_resolution}} \label{subsec:optimal_if_resolution}

Let us fix strictly admissible coordinates $(x,y)$, and set $t:=t_{\max}(x,y)$. After an extension of $K$, we may choose an element $\Pi\in K$ with $v_K(\Pi)=t$. Let $\X\to\X_0$ be the weighted  blow-up corresponding to the choice of $(x,y)$ and $t$, with exceptional fiber $\Xb_1$. We assume that $\X\to\X_0$ is not a stable resolution, i.e.\ that $\Xb_1$ is not semistable. To prove Lemma \ref{lem:optimal_iff_resolution} it then suffices to show that $(x,y)$ is not optimal.

\medskip
The proof of Proposition \ref{prop:cusp_resolution} (iii) revealed an explicit equation for the curve $\Xb_1$, obtained as follows.
Write
\[
  F:= \Pi^{-6}f(x,y) = \Pi^{-6}f(\Pi^2 x_1,\Pi^3 y_1) = \sum_{i,j} b_{i,j}x_1^i y_1^j
\]
as an element of the subring $\OO_K[x_1,y_1]\subset K[x,y]$ with generators $x_1:=\Pi^{-2}x$, $y_1:=\Pi^{-3}y$. Then $F$, as a polynomial in $x_1,y_1$, is primitive, and its reduction to the residue field $k$ is an affine Weierstrass equation for $\Xb_1$,
\begin{equation} \label{eq:Fb}
   \bar{F}=\sum_{i,j} \bar{b}_{i,j} x_1^iy_1^j = 0.
\end{equation}
In particular, $\Xb_1$ is a Weierstrass cubic. So our assumption means that $\Xb_1$ is cuspidal, i.e.\ it has a cusp as singularity.

Let $\delta,\epsilon,\zeta\in\OO_K$ be integral elements, to be determined later. Set
\begin{equation} \label{eq:delta_epsilon_zeta}
    \alpha:=\Pi^2\delta,\;\; \beta:=\Pi^3\epsilon,\;\; \gamma:=\Pi\zeta
\end{equation}
and let $g\in\GL_3(K)$ denote the matrix representing the coordinate change
\begin{equation} \label{eq:alha_beta_gamma_cs}
	x\leftarrow x+\alpha, \quad y\leftarrow y +\gamma x +\beta.
\end{equation}
It transforms the equation $f\in A$ for $\X_0\subset\AA_{\OO_K}^2$ into 
\begin{equation} \label{eq:f'}
	f':=\lexp{g}{f} = \sum_{i,j} A_{i,j} x^i y^j,
\end{equation}	
with $A_{i,j}\in\OO_K$. 

Let $(x',y'):=\lexp{g}{(x,y)}$ be the new coordinates, obtained after the coordinate change by $g$. We claim that 
\begin{equation} \label{eq:t_max_new}
  t_{\max}(x',y') \geq t.	
\end{equation}
To see this, write
\begin{equation} \label{eq:g^F}
   F' := \Pi^{-6}(f')(\Pi^2x_1,\Pi^3y_1)
     = \sum_{i,j} B_{i,j} x_1^i y_1^j, 
\end{equation}
with
\[
    B_{i,j} := \Pi^{2i+3j-6} A_{i,j} \in K.
\] 
Regarding $F$ as a polynomial in $x_1,y_1$ and using \eqref{eq:delta_epsilon_zeta} a straightforward calculation shows that
\begin{equation} \label{eq:g^F_delta_epsilon_gamma} 
    F' = F(x_1+\delta, y_1+\zeta x_1 + \epsilon).
\end{equation}
This shows that the $B_{i,j}\in \OO_K$ are integers. It follows that 
\begin{equation} \label{eq:v_K(A_i,j)}
    v_K(A_{i,j}) = v_K(B_{i,j}) + (6 -2i -3j)t \geq (6 - 2i -3j)t.
\end{equation}
By \eqref{eq:t_def} this implies \eqref{eq:t_max_new}. 

With \eqref{eq:t_max_new} proved, the weighted blow-up $\X'\to\X_0$ corresponding to the coordinates $(x',y')$ and the constant $t\leq t_{\max}(x',y')$ is well defined. On the affine patch $\Spec\OO_K[x_1,y_1]\subset\P$, $\X'$ is defined by the polynomial equation \eqref{eq:g^F}. So by \eqref{eq:g^F_delta_epsilon_gamma}, the exceptional fiber $\Xb_1'$ is the Weierstrass cubic defined by
\begin{equation} \label{eq:g^Fb}
   \bar{F}' = \bar{F}(x_1+\bar{\delta}, y_1+\bar{\zeta} x_1 + \bar{\epsilon}),
\end{equation}
where $\bar{\delta},\bar{\epsilon},\bar{\zeta}\in k$ are the residue classes of $\delta,\epsilon,\zeta$. Since we assume that the Weierstrass cubic $\Xb_1$ defined by \eqref{eq:Fb} is cuspidal, Proposition \ref{prop:RWNF} (ii) shows that we can choose $\delta,\epsilon,\zeta\in\OO_K$ such that \eqref{eq:g^Fb} is the standard cuspidal cubic $y_1^2-x_1^3$. In other words, for all $(i,j)$ with $2i+3j<6$ we have $v_K(B_{i,j})>0$. It follows that
the inequality in \eqref{eq:v_K(A_i,j)} is strict, for all $(i,j)$ with $2i+3j<6$. Therefore,
\[
    t_{\max}(x',y') > t=t_{\max}(x,y).
\]  
So we have shown that the coordinates $(x,y)$ are {\em not} optimal. This concludes the proof of Lemma \ref{lem:optimal_iff_resolution}.

\subsection{Inessential coordinate changes}

The proof of Lemma~\ref{lem:optimal_cs_exists} will maximize \(t_{\max}\) on
a restricted family of coordinate changes. We first justify this reduction. The
point is that some affine-linear changes do not affect \(t_{\max}\), while every
admissible coordinate system is, up to such changes, obtained from a fixed one
by a strict unipotent change.

\begin{definition}
	\label{def:inessential-coordinate-change}
	An inessential coordinate change is an affine-linear coordinate change of the form
	\[
	x'=\lambda x+\eta y,\qquad y'=\mu y,
	\]
	where
	\[
	\lambda,\mu\in\mathcal O_K^\times,\qquad \eta\in\mathcal O_K.
	\]
	Two admissible coordinate systems are called equivalent if one is obtained from the other by an inessential coordinate change.
\end{definition}

\begin{lemma}
	\label{lem:tmax-invariant-inessential}
	For admissible coordinates $(x,y)$ the value \(t_{\max}(x,y)\) is invariant under inessential coordinate changes.
\end{lemma}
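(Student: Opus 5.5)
The plan is to show the inequality \(t_{\max}(x',y')\ge t_{\max}(x,y)\) for an arbitrary inessential coordinate change, and then obtain equality by symmetry. The inverse of an inessential change is again inessential: it is \(x=\lambda^{-1}x'-\lambda^{-1}\mu^{-1}\eta y'\), \(y=\mu^{-1}y'\), with \(\lambda^{-1},\mu^{-1}\in\OO_K^\times\) and \(-\lambda^{-1}\mu^{-1}\eta\in\OO_K\). So applying the inequality in both directions gives \(t_{\max}(x',y')=t_{\max}(x,y)\). Along the way I will also check that \((x',y')\) is again admissible, so that \(t_{\max}(x',y')\) is defined by \eqref{eq:t_def}.

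The key observation is that such a substitution never lowers the \((2,3)\)-weighted degree. In the convention of the paper, the new equation is obtained by a substitution of the form
\[
x\leftarrow ax+by,\qquad y\leftarrow dy,
\]
with \(a,d\in\OO_K^\times\) and \(b\in\OO_K\). Under it a monomial \(x^iy^j\) becomes
\[
(ax+by)^i(dy)^j=\sum_{k=0}^{i}\binom{i}{k}a^{i-k}b^kd^j\,x^{i-k}y^{j+k}.
\]
The monomial \(x^{i-k}y^{j+k}\) has weight \(2i+3j+k\ge 2i+3j\), and all coefficients are integral. Hence each new coefficient \(A_{i',j'}\) is an \(\OO_K\)-linear combination of old coefficients \(a_{i,j}\) with \(2i+3j\le 2i'+3j'\). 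Equality of weights holds exactly for the term \(k=0\), which contributes \(a^{i'}d^{j'}a_{i',j'}\).

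Now put \(t:=t_{\max}(x,y)\) and \(w'=2i'+3j'\). If \(w'<6\), every contributing \(a_{i,j}\) has weight \(w\le w'<6\), so \(v_K(a_{i,j})\ge(6-w)t\ge(6-w')t\). The ultrametric inequality then gives \(v_K(A_{i',j'})\ge(6-w')t\). If \(w'=6\), then \(A_{i',j'}\) equals the unit multiple \(a^{i'}d^{j'}a_{i',j'}\) of a unit, plus contributions from coefficients of weight \(<6\), all of which lie in \(\m_K\). Hence \(A_{i',j'}\) is a unit. For \(w'>6\) integrality is clear. So \eqref{eq:admissible_condition} holds for \(f'\) without rescaling \(f\), and \(P\) stays at the origin because the change is linear. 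Thus \((x',y')\) is admissible, and the estimate for \(w'<6\) gives \(t_{\max}(x',y')\ge t\) directly from \eqref{eq:t_def}.

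There is no real obstacle here. The only points that need care are the bookkeeping of the substitution direction, and confirming that the weight-\(6\) coefficients stay units so that \(t_{\max}(x',y')\) is defined. The essential structural input is that \(y\) is not allowed to acquire an \(x\)-term and that no translations occur. Either would lower weights: for example \(y\leftarrow y+cx\) sends \(y^2\) to terms containing \(x^2\), of weight \(4\). The triangular shape of an inessential change is exactly what makes the weight filtration preserved.
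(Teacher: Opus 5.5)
Your proposal is correct and uses the same idea as the paper: the substitution \(x\leftarrow ax+by\), \(y\leftarrow dy\) never lowers \((2,3)\)-weights, so each coefficient of weight \(<6\) is controlled by old coefficients of no larger weight. The paper handles the unit rescalings and the shear \(x\leftarrow x+\eta y\) separately and leaves the reverse inequality and the preservation of admissibility implicit; you make both explicit, through the inverse inessential change and the check that the weight-six coefficients stay units.
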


\begin{proof}
	Unit rescalings of \(x\) and \(y\) only multiply coefficients by units, and so does not affect $t_{\max}(x,y)$. It remains
	to consider \(x\leftarrow x+\eta y,\;y\leftarrow y\). Since
	\[
	(x+\eta y)^iy^j
	=
	x^iy^j+\text{terms of strictly larger \((2,3)\)-weight},
	\]
	such a coordinate change does not affect $t_{\max}(x,y)$ either.
\end{proof}

\begin{definition}
	\label{def:strict-unipotent-change}
	A strict unipotent coordinate change is a coordinate change of the form
	\[
	x \leftarrow x+\alpha,\qquad y\leftarrow y+\gamma x+\beta,
	\]
	with \(\alpha,\beta,\gamma\in\mathfrak m_K\).
\end{definition}

\begin{lemma}
	\label{lem:admissible-slice}
	Every admissible coordinate system is equivalent to one obtained from a fixed
	admissible coordinate system by a strict unipotent coordinate change.
\end{lemma}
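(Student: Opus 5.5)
Fix an admissible coordinate system \((x_0,y_0)\) and let \((x,y)\) be any other admissible one. Since both are affine-linear coordinates on \(\AA^2_{\OO_K}\) centered at \(P\), there is a unique affine coordinate change relating them:
\[
x\leftarrow \alpha+ax+by,\qquad y\leftarrow \beta+cx+dy,
\qquad\text{with }\begin{pmatrix}a&b\\c&d\end{pmatrix}\in\GL_2(\OO_K).
\]
Both systems send \(P\) to the origin, so \(\alpha,\beta\in\m_K\). The plan is to factor this change as an inessential change composed with a strict unipotent change, using the normal form \eqref{eq:cusp_normal_form} of \(\bar f\).

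The first step is to show \(\bar c=0\), i.e.\ \(c\in\m_K\). I would reduce mod \(\m_K\) and compare special fibers. In both coordinate systems \(\bar f\) has the form \(y^2-x^3+\)(higher weighted terms) up to a unit. Hence the tangent cone of the cusp at \(P\) is the double line \(y^2=0\) in either system. The linear part of the reduced change must therefore carry the line \(\bar y=0\) to the line \(\bar y=0\). Under our convention, the new \(y\)-coordinate restricted to the tangent direction is expressed through \(\bar c\) and \(\bar d\), so this forces \(\bar c=0\). (The exact side on which the condition lands depends on whether we look at \(g\) or \(g^{-1}\), but either way it says the matrix is upper triangular mod \(\m_K\).) Consequently \(\bar a,\bar d\) are units, so \(a,d\in\OO_K^\times\).

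The second step is the factorization. Write \(\gamma:=c\,a^{-1}\in\m_K\). We want
\[
x\leftarrow x'+\alpha,\quad y\leftarrow y'+\gamma x'+\beta
\]
composed with the inessential change \(x'=\lambda x+\eta y\), \(y'=\mu y\). Substituting gives
\[
x\leftarrow \lambda x+\eta y+\alpha,\qquad y\leftarrow \mu y+\gamma\lambda x+\gamma\eta y+\beta.
\]
Matching coefficients, we take \(\lambda=a\) and \(\eta=b\). Then \(\gamma\lambda=c\) holds by the choice of \(\gamma\), and we need \(\mu+\gamma b=d\), i.e.\ \(\mu=d-cb/a\). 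This \(\mu\) is a unit because it equals \(\det/a\). The translations \(\alpha,\beta\) already lie in \(\m_K\). The composition order of ``equivalent'' and ``obtained from the fixed system'' may require composing in the opposite order. In that case I would redo the matching with the inessential change applied first; the same triangularity makes it solvable, with \(\gamma\) adjusted by unit factors.

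The remaining check is that the intermediate system, obtained from \((x_0,y_0)\) by the strict unipotent change, is itself admissible. This is necessary only if equivalence is meant within admissible systems. It follows from Lemma~\ref{lem:tmax-invariant-inessential}-type reasoning: inessential changes preserve condition \eqref{eq:admissible_condition}, because they only add terms of larger weight and rescale by units. So admissibility transfers from \((x,y)\) to the intermediate system.

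The main obstacle is the first step, pinning down \(c\in\m_K\) rigorously from the cusp normal form while keeping track of the row-vector convention. I would argue via the weighted order: the weighted-degree-6 part \(y^2-x^3\) must be preserved, up to a unit, by the reduced change. A nonzero \(\bar c\) would introduce an \(x^2\) term of weight \(4<6\) from \(y^2\), contradicting admissibility of the target.
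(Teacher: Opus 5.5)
Your proposal is correct and takes essentially the same route as the paper. Both arguments note that the translations lie in $\m_K$, use the cusp's tangent data to force $c\in\m_K$ with $a,d$ units (you via the tangent cone or the $x^2$-coefficient of weight $4$, the paper via the orders of $\bar x',\bar y'$ on the cusp), and then write down an explicit factorization into a strict unipotent change followed by an inessential one. The only differences are cosmetic: you factor in the substitution direction, giving $\gamma=c/a$ and $\mu=(ad-bc)/a$ instead of the paper's $\gamma=c/d$ and $\lambda=a-bc/d$, and you add a check, omitted in the paper, that the intermediate system is admissible. Your worry about composition order is unnecessary, since inessential changes form a group and equivalence is therefore symmetric.
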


\begin{proof}
	Write the transition between the two coordinate systems as
	\[
	x'=ax+by+\alpha,\qquad y'=cx+dy+\beta .
	\]
	Since both systems are centered at \(P\), we have \(\alpha,\beta\in\m_K\).
	Comparing the orders of \(\bar x'\) and \(\bar y'\) on the cusp gives
	\[
	a,d\in\OO_K^\times,\qquad c\in\m_K .
	\]
	Set
	\[
	\mu:=d,\qquad \gamma:=c/d,\qquad
	\eta:=b,\qquad \lambda:=a-bc/d .
	\]
	Then \(\gamma\in\m_K\) and \(\lambda,\mu\in\OO_K^\times\). Put
	\[
	\beta_0:=\beta/\mu,\qquad
	\alpha_0:=(\alpha-\eta\beta_0)/\lambda .
	\]
	Then \(\alpha_0,\beta_0,\gamma\in\m_K\), and
	\[
	x'=\lambda(x+\alpha_0)+\eta(y+\gamma x+\beta_0),
	\qquad
	y'=\mu(y+\gamma x+\beta_0).
	\]
	Thus \((x',y')\) is obtained from the strict unipotent change
	\[
	x\leftarrow x+\alpha_0,\qquad
	y\leftarrow y+\gamma x+\beta_0
	\]
	by an inessential coordinate change.
\end{proof}

\subsection{Existence of optimal coordinates}
\label{subsec:existence_of_optimal_coordinates}

We now prove Lemma \ref{lem:optimal_cs_exists}, using the methods from \cite[\S 5]{SternWewers}.

\medskip
Let $U\subset\GL_{3,K}$ denote the linear algebraic group of upper unipotent matrices
\begin{equation} \label{eq:g_matrix}
   g = \begin{pmatrix}
   	  1 & \alpha & \beta \\ 0 & 1 & \gamma \\ 0 & 0 & 1
   \end{pmatrix}.
\end{equation}
In the following, we consider $\alpha,\beta,\gamma$ as variables, so that $U=\Spec K[\alpha,\beta,\gamma]\cong\AA^3_K$. 

Let us fix one admissible coordinate system $(x,y)$, giving rise to the polynomial equation $f\in\OO_K[x,y]$ for $\X_0\subset\AA_{\OO_K}^2$. As in \eqref{eq:f'}, we write
\begin{equation} \label{eq:f^g}
     f^g =  \sum_{i,j} A_{i,j}(g)x^iy^j
\end{equation}
for a point $g\in U$, but now we regard $A_{i,j}\in K[\alpha,\beta,\gamma]$ as a regular function on $U$, i.e.\ as a polynomial in $\alpha,\beta,\gamma$.  

Let $U^\rig$ denote the rigid analytic space over $K$ associated to $U$, see \cite[\S 5.4]{BoschRigid}. The underlying set of $U^\rig$ is simply the set of closed points of the scheme $U$. Let $U_0\subset U^\rig$ denote the unit disk, i.e.\ the affinoid subdomain defined by the conditions
\[
   v_K(\alpha) \geq 0,\; v_K(\beta)\geq 0,\; v_K(\gamma)\geq 0. 
\]

Given a point $g\in U^\rig$, the residue field $L:=K(g)$ is a finite extension of $K$, and we may then consider $g\in U(L)$ as an $L$-rational point of $U$, and $(x',y'):=\lexp{g}{(x,y)}$ as coordinates on $\AA_L^2$.  We have $g\in U_0$ if and only if the matrix entries  $\alpha,\beta,\gamma\in\OO_L$ of $g$ are integral. 

For an element $A\in K[\alpha,\beta,\gamma]$, we set $v_g(A) := v_L(A(g))$. We define a function $t_{\max}:U_0\to\RR\cup\{\infty\}$ by 
\[
	t_{\max}(g) := \min_{2i+3j<6} \frac{v_g(A_{i,j})}{6-2i-3j}.  
\]
So for $g\in U_0$ we have 
\[
   t_{\max}(g)=t_{\max}(\lexp{g}{(x,y)}), 
\]
where $(x',y')=\lexp{g}{(x,y)}$ is the new coordinate system over $L:=K(g)$, obtain by the coordinate change $g$, and $t_{\max}(x',y')$ is defined by \eqref{eq:t_def}. Note that we cannot exclude the possibility that $t_{\max}(g)=\infty$. For instance, the point on $\AA_L^2$ with coordinates $x'=0,y'=0$ may be a singular point of the generic fiber  $X_L$ (but may also not lie on $X_L$ at all). Therefore, the conclusion of Lemma \ref{lem:admissible} (ii) is not available.
	
By our assumptions on $\X_0$ there exists a primitive polynomial $h\in\OO_K[x,y]$ such that the open subset $D(h)\subset\X_0$ is a neighborhood of $P$ and such that $D(h)\backslash\{P\}$ is smooth over $\OO_K$. In particular, $D(\bar{h})\subset \X_{0,s}$ is an open neighborhood of $P$ on which $P$ is the only singular point. Let $V\subset U_0$ denote the rational subdomain defined by the condition
\begin{equation}
  v_g(C_{0,0}) = 0, \qquad \text{where}\;\;\lexp{g}{h}=\sum_{i,j} C_{i,j}(g)x^iy^j.
\end{equation}
Note that $1_U\in V$ because $\bar{h}(P)\neq 0$.

\begin{lemma} \label{lem:t_max_admissible}
  For $g\in V$ we have  $t_{\max}(g)<\infty$. Moreover, $t_{\max}(g)>0$ if and only if the coordinates $\lexp{g}{(x,y)}$ are admissible.
\end{lemma}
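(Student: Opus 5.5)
The plan is to treat a point \(g\in V\) as an integral affine coordinate change over \(L=K(g)\). Its origin \(Q:=\{x'=0,y'=0\}\) is then an \(\OO_L\)-section of \(\AA^2_{\OO_L}\). The two claims come from reading off what vanishing, or positivity of valuation, of the low-weight coefficients \(A_{i,j}(g)\), \(2i+3j<6\), says about this section. The defining condition \(v_g(C_{0,0})=0\) of \(V\) says exactly that \(h(Q)\) is a unit in \(\OO_L\). Hence both the generic point \(Q_L\) and the reduction \(\bar Q\) lie in \(D(h)\), base changed to \(\OO_L\). Throughout I use that \(f'=\lexp{g}{f}\) is still primitive, since \(g\in\GL_3(\OO_L)\). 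I also use that the hypotheses on \(\X_0\), namely normality near \(P\) and smoothness of \(D(h)\setminus\{P\}\), are preserved by the base change to \(\OO_L\), as noted in the introduction.

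\emph{Finiteness.} Suppose \(t_{\max}(g)=\infty\). Then \(A_{i,j}(g)=0\) for all \(2i+3j<6\); in particular \(A_{0,0}=A_{1,0}=A_{0,1}=0\). So \(f'\) vanishes at \(Q_L\) together with its first partial derivatives, and \(Q_L\) is a singular point of \(X_L\). But \(Q_L\) lies in the generic fiber of \(D(h)_{\OO_L}\), and the generic fiber lies inside \(D(h)_{\OO_L}\setminus\{P\}\), which is smooth over \(\OO_L\). Hence \(X_L\) is smooth at \(Q_L\), a contradiction. This is essentially the argument of Lemma~\ref{lem:admissible}~(ii), with \(V\) supplying the needed control on where \(Q\) sits.

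\emph{Admissibility.} If \(\lexp{g}{(x,y)}\) is admissible, then \(t_{\max}(g)>0\) by Lemma~\ref{lem:t_max}~(i). Conversely, assume \(t_{\max}(g)>0\), so that \(v_L(A_{i,j}(g))>0\) for all \(2i+3j<6\).
\begin{enumerate}
\item The reduction \(\bar f'\) has no constant or linear term. So \(\bar Q\) is a singular point of \(\X_{0,s}\) lying in \(D(\bar h)\), and since \(P\) is the only singular point there, \(\bar Q=P\). Thus the new coordinates are centered at \(P\).
\item Since \(g\) has integral entries, \(\bar f'\) is obtained from \(\bar f\) by the reduced affine coordinate change \(\bar g\), which is centered at \(P\). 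Write \(\bar f'=c\,y^2+d\,x^3+(\text{terms of weight}>6)\); this holds because the coefficients of \(x^2\) and \(xy\) (weights \(4\) and \(5\)) also reduce to zero.
\item The quadratic part of \(\bar f'\) is then \(c\,y^2\). Since a cusp has multiplicity two, \(c\neq0\).
\item If \(d=0\), then after completing the square every remaining term of \(\bar f'\) has weight \(\ge7\). The singularity at \(P\) would then be of type \(y^2=x^n\) with \(n\ge4\), or non-reduced, and not an ordinary cusp; this contradicts \(\widehat\OO_{\X_{0,s},P}\cong k[[x,y]]/(y^2-x^3)\). Hence \(d\neq0\).
\end{enumerate}
So \(A_{3,0}\) and \(A_{0,2}\) are units, and \eqref{eq:admissible_condition} holds.

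The main obstacle is the step \(d\neq0\) in (iv). I would handle it through the weighted Newton polygon: an \(A_2\)-singularity \(y^2-x^3\) has weighted initial form of weight \(6\) containing \(x^3\), and this property is invariant under coordinate changes centered at the point with \(c\neq0\). Alternatively, one can compare Milnor numbers or \(\delta\)-invariants, which for a cusp are \(2\) and \(1\) respectively.
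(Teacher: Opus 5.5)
Your finiteness argument and steps (i)--(iii) of the admissibility direction agree with the paper's proof. The gap is step (iv), which you flag yourself but do not close.

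\emph{Why (iv) fails as written.} Completing the square is unavailable when $\operatorname{char}(k)=2$. There $(y+\lambda x^2)^2=y^2+\lambda^2x^4$, so a weight-$7$ term such as $x^2y$ cannot be absorbed into $c\,y^2$. The reduction to $y^2=x^n$ with $n\ge 4$ is therefore not valid in that case. Your Milnor-number alternative also fails in characteristics $2$ and $3$, because the Milnor number of $y^2-x^3$ is already infinite there. The residue characteristic is arbitrary in this paper, and $p=2$ is treated separately later, so this is a genuine gap. Your claim that the weighted initial form is invariant under tangent-line-preserving coordinate changes is true, but it is exactly the statement that needs proof. The $\delta$-invariant route would require computing $\delta$ for $c\,y^2+(\text{weight}\ge 7)$ in every characteristic, which you do not do.

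\emph{How the paper closes it.} It uses a characteristic-free invariant. Since the tangent cone is $c\,y^2$, the line $y=0$ is the tangent line. On the normalization $s\mapsto(s^2,s^3)$ of a cusp, a smooth germ through $P$ pulls back to order $2$, and to order $3$ exactly when it is tangent. The intersection multiplicity of $y=0$ with $\X_{0,s}$ at $P$ equals $\operatorname{ord}_x\bar f'(x,0)$, so this order is $3$, and $d\neq 0$ follows.

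\emph{A shorter fix in your setting.} The fixed coordinates are admissible, and $\bar g$ is the change $x\leftarrow x+\bar\alpha$, $y\leftarrow y+\bar\gamma x+\bar\beta$. Your step (i) gives $\bar\alpha=\bar\beta=0$. Then $\bar f'=\bar f(x,y+\bar\gamma x)$, and because $\bar a_{2,0}=\bar a_{1,1}=0$, the $x^2$-coefficient of $\bar f'$ is $\bar a_{0,2}\bar\gamma^2$. This coefficient must vanish, so $\bar\gamma=0$. Hence $\bar f'=\bar f$, and admissibility is inherited directly.
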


\begin{proof}
	Put
	\[
	F:=\lexp{g}{f}=\sum A_{i,j}(g)x^iy^j .
	\]
	Since \(g\in U_0\), all coefficients of \(F\) are integral. 
	Let \(Q\) be the
	special-fiber point which is the origin in the coordinates \(\lexp{g}{(x,y)}\).
	Since \(g\in V\), we have \(Q\in D(\bar h)\). 
	
	If all \(A_{i,j}(g)\) with \(2i+3j<6\) vanished, then the origin in the
	coordinates \(\lexp{g}{(x,y)}\) would be a singular point of the generic fiber, specializing to $Q$, contradicting the smoothness
	of \(D(h)\setminus\{P\}\). Hence \(t_{\max}(g)<\infty\).
	
	Assume first that \(t_{\max}(g)>0\). Then the coefficients of \(F\) of
	\((2,3)\)-weight \(<6\) vanish modulo \(\mathfrak m\). In particular, the
	constant and linear terms vanish, so \(Q\) is a singular point of \(\X_{0,s}\).
	But \(P\) is the only singular point of \(\X_{0,s}\) in \(D(\bar h)\). Hence
	\(Q=P\), so the new coordinates are centered at \(P\).
	
	It remains to check the weight-six terms. Since \(P\) is a cusp, the multiplicity
	of \(\X_{0,s}\) at \(P\) is \(2\). As the coefficients of \(x^2\) and \(xy\) vanish,
	the quadratic tangent cone is necessarily
	\[
	\bar A_{0,2}y^2
	\]
	with \(\bar A_{0,2}\ne0\). Thus \(v(A_{0,2}(g))=0\). The tangent line is
	therefore \(y=0\). For a cusp, the tangent line has intersection multiplicity
	\(3\) with the curve. Hence \(F(x,0)\) has nonzero cubic term modulo
	\(\mathfrak m\), i.e.
	\[
	\bar A_{3,0}\ne0.
	\]
	Thus \(v(A_{3,0}(g))=0\), and the coordinates are admissible.
	
	Conversely, if \(\lexp{g}{(x,y)}\) is admissible, then by definition all
	coefficients of weight \(<6\) have positive valuation. Hence
	\(t_{\max}(g)>0\).
\end{proof}  	
  	
We can now finish the proof of Lemma \ref{lem:optimal_cs_exists}. 	
By \cite[Lemma 5.1]{SternWewers}, the function $t_{\max}$ attains a maximum at some point $g_{\max}\in V$. Since \(1_U\in V\) and the fixed coordinates are admissible, the maximum is at least
\(t_{\max}(1)>0\). Therefore, by Lemma~\ref{lem:t_max_admissible}, $g_{\max}$ corresponds to an admissible coordinate system.

By Lemmas~\ref{lem:tmax-invariant-inessential} and
\ref{lem:admissible-slice}, every admissible coordinate system is equivalent to one obtained from the fixed system by a strict unipotent change, and \(t_{\max}\) is unchanged under this equivalence. Hence the maximum on \(V\) is the maximum among all admissible coordinate systems. We conclude that the coordinates corresponding to $g_{\max}$ are optimal. This concludes the proof of Lemma \ref{lem:optimal_cs_exists} and, as a consequence, of Theorem \ref{thm:cusp_resolution}. 

\section{Rigidification} \label{sec:rigidification}

The construction of the blow-up $\X\to\X_0$ used in the proof of Theorem \ref{thm:cusp_resolution} is very explicit, but this does not yet give us a practical algorithm to compute the resolution. The problem is the proof of Lemma \ref{lem:optimal_cs_exists}, where we use a version of the nonarchimedean maximum principle which seems hard to implement. In this section we give an alternative criterion for an admissible coordinate system $(x,y)$ to lead to a resolution. In \S \ref{sec:implementation} we then show that this leads to a practical algorithm to find such a coordinate system. 

\subsection{Rigidified coordinates} 

Set 
\begin{equation} \label{eq:I_def}
	I := \begin{cases}
		\{(0,0), (1,0), (2,0) \}, & p=2, \\
		\{ (0,0), (0,1), (1,1)\}, & p\neq 2.
	\end{cases}
\end{equation}	
Let $(x,y)$ be a strictly admissible coordinate system and
\[
f(x,y) = \sum_{i,j} a_{i,j} x^iy^j
\]
the corresponding equation of the affine plane model $\X_0$. We consider the following two conditions on $(x,y)$.

\begin{cond} \label{cond:coeffs}
	\begin{enumerate}[(a)]
		\item 
		We have $a_{i,j}=0$, for all $(i,j)\in I$.
		\item 
		We have
		\[
		v_K(a_{i,j}) > (6-2i-3j)\cdot t_{\max}(x,y),
		\] 
		for all $(i,j)\in I$. 
	\end{enumerate}  	
\end{cond}

It is clear that (a) implies (b). Condition \ref{cond:coeffs} (b) means that the minimum in the definition \eqref{eq:t_def} of $t_{\max}(x,y)$ is {\em not} attained for any of the pairs $(i,j)\in I$. Using (b) we can formulate an alternative to Lemma \ref{lem:optimal_iff_resolution}.

\begin{proposition} \label{prop:cond_b_is_sufficient}
	Let $\X\to\X_0$ be the blow-up corresponding to the strictly admissible coordinate system $(x,y)$ with parameter $t=t_{\max}(x,y)$. If Condition \ref{cond:coeffs} (b) holds, then $\X\to\X_0$ is the stable resolution of the cusp $P$.	
\end{proposition}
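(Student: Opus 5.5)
By Corollary \ref{cor:cusp_resolution}, it suffices to show that the exceptional fiber $\Xb_1$ of $\X\to\X_0$, with $t=t_{\max}(x,y)$, is semistable. The plan is to read off from Condition \ref{cond:coeffs} (b) that the Weierstrass equation \eqref{eq:exceptional_curve_equation} for $\Xb_1$ is in weak rigidified Weierstrass normal form, and then to apply Proposition \ref{prop:RWNF} (ii).

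First I would relate the coefficients. In \eqref{eq:exceptional_curve_equation} the coefficients are the residues $\bar b_{i,j}$ with $b_{i,j}=a_{i,j}/\Pi^{6-2i-3j}$ and $2i+3j<6$. Explicitly,
\[
\bar a_1=\bar b_{1,1},\quad \bar a_2=-\bar b_{2,0},\quad \bar a_3=\bar b_{0,1},\quad \bar a_4=-\bar b_{1,0},\quad \bar a_6=-\bar b_{0,0}.
\]
Since $v_K(\Pi)=t$, Condition \ref{cond:coeffs} (b) says $v_K(b_{i,j})=v_K(a_{i,j})-(6-2i-3j)t>0$ for all $(i,j)\in I$. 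Hence $\bar b_{i,j}=0$ for $(i,j)\in I$.

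Unwinding the definition of $I$ in \eqref{eq:I_def}:
\begin{itemize}
\item For $p=2$, the vanishing of $\bar b_{0,0},\bar b_{1,0},\bar b_{2,0}$ gives $\bar a_6=\bar a_4=\bar a_2=0$.
\item For $p\neq 2$, the vanishing of $\bar b_{0,0},\bar b_{0,1},\bar b_{1,1}$ gives $\bar a_6=\bar a_3=\bar a_1=0$.
\end{itemize}
In either case \eqref{eq:exceptional_curve_equation} is exactly in wRWNF, as in Definition \ref{def:RWNF}. Signs play no role, since only vanishing matters.

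Next I would use $t=t_{\max}(x,y)$. By Lemma \ref{lem:t_max} (iii), or equivalently Remark \ref{rem:t_max}, the minimum defining $t_{\max}$ is attained for some $(i,j)$ with $2i+3j<6$. For that pair $v_K(b_{i,j})=0$, so some $\bar a_k$ with $k\in\{1,2,3,4,6\}$ is nonzero. Condition (b) says this pair does not lie in $I$. Therefore the nonzero coefficient is one of those not forced to vanish by wRWNF: $\bar a_1$ or $\bar a_3$ when $p=2$, and $\bar a_2$ or $\bar a_4$ when $p\neq 2$.

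Proposition \ref{prop:RWNF} (ii) then says that $\Xb_1$ is semistable, that is, not the cuspidal cubic $y^2z=x^3$. Corollary \ref{cor:cusp_resolution} gives the conclusion.

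The only step requiring care is matching the index set $I$ with the labels $a_1,\dots,a_6$. This is a bookkeeping check rather than a real obstacle. Note also that strict admissibility is used so that the leading terms are exactly $y_1^2-x_1^3$, which ensures the equation is genuinely in Weierstrass normal form.
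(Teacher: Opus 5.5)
Your proof is correct and follows the paper's own argument. Condition \ref{cond:coeffs} (b) forces $\bar b_{i,j}=0$ for $(i,j)\in I$, which puts the exceptional cubic in weak rigidified Weierstrass normal form; $t=t_{\max}(x,y)$ (Remark \ref{rem:t_max}) then gives a nonzero coefficient, and Proposition \ref{prop:RWNF} (ii) together with Corollary \ref{cor:cusp_resolution} concludes, with your matching of $I$ to the labels $\bar a_i$ also correct.
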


\begin{proof}
	Recall that the exceptional divisor $\Xb_1$ of $\X\to\X_0$ is the Weierstrass cubic given by \eqref{eq:exceptional_curve_equation}. It follows from Condition \ref{cond:coeffs} (b) that, with the notation of the proof of Proposition \ref{prop:cusp_resolution} (iii), $\bar{b}_{i,j}=0$ for $(i,j)\in I$ and hence $\bar{a}_i=0$ for $i\in\{2,4,6\}$ if $p=2$ and $i\in\{1,3,6\}$ if $p\neq 2$. In terms of Definition \ref{def:RWNF} this means that $\Xb_1$ is a cubic in {\em weak rigidified Weierstrass normal form}. But since $t=t_{\max}(x,y)$, there exists at least one index $i\in\{1,2,3,4,6\}$ such that $\bar{a}_i\neq 0$ (Remark \ref{rem:t_max}). By Proposition \ref{prop:RWNF} (ii) it follows that $\Xb_1$ is semistable and hence $\X\to\X_0$ is the stable resolution.
\end{proof}

How does Proposition \ref{prop:cond_b_is_sufficient} help us to find a
resolution in practice? The idea is to look for a coordinate system satisfying
the stronger Condition \ref{cond:coeffs} (a). For the coordinate changes
considered below, this amounts to solving a system of algebraic equations over
\(K\). Since Condition \ref{cond:coeffs} (b) is an open valuation condition, a
sufficiently accurate approximation to a solution of this system still satisfies
Condition \ref{cond:coeffs} (b). Proposition \ref{prop:cond_b_is_sufficient}
then applies.

The next step is to show that the sufficient criterion above is not empty: after a finite
extension of \(K\), there exist admissible coordinates satisfying the stronger
rigidifying condition.

\subsection{Existence of rigidified coordinates} \label{subsec:rigidified}

We retain the notation used in \S \ref{subsec:existence_of_optimal_coordinates}. In particular, we fix one strictly admissible coordinate system $(x,y)$.

For $s\in\QQ$, $s\geq 0$, let $U_s\subset U^\rig$ denote the affinoid subdomain defined by the inequalities
\[
   v_L(\alpha)\geq 2s,\;\; v_L(\beta)\geq 3s,\;\; v_L(\gamma)\geq s,
\]	
and $U_s^+\subset U_s$ the residue class defined by the strict inequalities. One easily checks that both subsets are actually {\em subgroups}.

If $L/K$ is a finite extension, and $(x',y')$ is another strictly admissible coordinate system, defined over $L$, then $(x',y')=\lexp{g}{(x,y)}$, for a unique $g\in U^\rig$ with $K(g)\subset L$. We say that $g$ is {\em perfect} if $(x',y')=\lexp{g}{(x,y)}$ leads to the stable resolution.

\begin{proposition} \label{prop:perfect}
  Let $g\in U_0^+$ be perfect. Then the subset of $U_0^+$ of all perfect elements is the left coset
  \[
       U_t\cdot g \subset U^\rig,
  \] 
  where $t=t_{\max}(\lexp{g}{(x,y)})$.
\end{proposition}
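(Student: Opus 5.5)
We first reduce to the case $g=1$. Since $U_0^+$ is a group and $\lexp{h}{(\lexp{g}{(x,y)})}=\lexp{hg}{(x,y)}$, we may replace the fixed coordinates $(x,y)$ by $\lexp{g}{(x,y)}$. The claim then becomes the following. Let $(x,y)$ be strictly admissible and perfect, with $t:=t_{\max}(x,y)$. Then an element $h\in U_0^+$ is perfect if and only if $h\in U_t$. Note that every $h\in U_0^+$ yields admissible coordinates, because a strict unipotent change preserves \eqref{eq:admissible_condition} modulo $\m_K$. After a finite extension we choose $\Pi$ with $v_K(\Pi)=t$.

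\emph{Step 1: $U_t$ consists of perfect elements.} Let $h\in U_t$ have entries $\alpha,\beta,\gamma$. Write $\alpha=\Pi^2\delta$, $\beta=\Pi^3\epsilon$ and $\gamma=\Pi\zeta$ with $\delta,\epsilon,\zeta\in\OO_L$. The computation in the proof of Lemma~\ref{lem:optimal_iff_resolution}, namely \eqref{eq:g^F_delta_epsilon_gamma} and \eqref{eq:v_K(A_i,j)}, gives two conclusions. First, $t_{\max}(\lexp{h}{(x,y)})\ge t$. Second, the exceptional cubic of the level-$t$ blow-up in the new coordinates is obtained from $\Xb_1$ by the unipotent Weierstrass change $(\bar\delta,\bar\epsilon,\bar\zeta)$; by Proposition~\ref{prop:WNF} it is therefore isomorphic to $\Xb_1$, hence semistable. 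By Remark~\ref{rem:t_max}, a semistable exceptional cubic at level $t$ forces $t=t_{\max}(\lexp{h}{(x,y)})$. Corollary~\ref{cor:cusp_resolution} then shows that the blow-up is the stable resolution, so $h$ is perfect.

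\emph{Step 2: perfect elements lie in $U_t$.} Let $h\in U_0^+$ be perfect and put $t':=t_{\max}(\lexp{h}{(x,y)})$. Both blow-ups are the stable resolution, which is unique. Hence the thicknesses of the node $\tilde P$ agree, and Proposition~\ref{prop:cusp_resolution}~(v) gives $t'=t$.

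Now suppose, for contradiction, that $h\notin U_t$. Set
\[
s:=\min\bigl(v(\alpha)/2,\;v(\beta)/3,\;v(\gamma)\bigr).
\]
Then $0<s<t$, and after a further extension we choose $\Pi_s$ with $v(\Pi_s)=s$.

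\begin{itemize}
\item Consider the level-$s$ rescaling $F_s:=\Pi_s^{-6}f(\Pi_s^2x_1,\Pi_s^3y_1)$ in the old coordinates. Since $s<t$, Remark~\ref{rem:t_max} shows that $\bar F_s=y_1^2-x_1^3$.
\item The same calculation as in \eqref{eq:g^F_delta_epsilon_gamma} shows that in the new coordinates the reduced level-$s$ equation is
\[
(y_1+\bar\zeta x_1+\bar\epsilon)^2-(x_1+\bar\delta)^3,
\]
where $(\bar\delta,\bar\epsilon,\bar\zeta)\neq 0$ by the choice of $s$.
\end{itemize}

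\emph{Key claim:} the stabilizer in $U$ of the cuspidal cubic $y^2=x^3$ is trivial in every characteristic. Equivalently, the displayed cubic has a nonzero coefficient of $(2,3)$-weight $<6$.

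To prove the claim we expand directly. The coefficient of $x_1^2$ is $\bar\zeta^2-3\bar\delta$. The coefficient of $x_1y_1$ is $2\bar\zeta$, and that of $y_1$ is $2\bar\epsilon$. The coefficient of $x_1$ is $2\bar\zeta\bar\epsilon-3\bar\delta^2$, and the constant term is $\bar\epsilon^2-\bar\delta^3$.
\begin{itemize}
\item If $p\neq2$, the vanishing of the $x_1y_1$ and $y_1$ coefficients gives $\bar\zeta=\bar\epsilon=0$. The constant term then gives $\bar\delta^3=0$, so $\bar\delta=0$.
\item If $p=2$, the $x_1^2$ coefficient gives $\bar\delta=\bar\zeta^2$ and the $x_1$ coefficient gives $\bar\delta^2=0$. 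Hence $\bar\delta=\bar\zeta=0$, and the constant term gives $\bar\epsilon=0$.
\end{itemize}
In both cases all three values vanish, proving the claim. Alternatively, the claim follows from Corollary~\ref{cor:wnf_stabilizer}, since the automorphisms of the pointed cuspidal cubic are only the scalings.

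By the key claim, some new coefficient $A_{i,j}$ with $2i+3j<6$ has valuation exactly $(6-2i-3j)s$. Hence $t'\le s<t$, contradicting $t'=t$. Therefore $h\in U_t$.

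The main obstacle is Step 2. It needs two ingredients. The first is the uniqueness of the stable resolution, which pins down the thickness. The second is the key claim that no nontrivial element of $U$ fixes the cuspidal normal form, which must be checked uniformly in all characteristics, including $p=2,3$.
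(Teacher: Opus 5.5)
Your proof is correct. Step~1 is essentially the paper's argument for the inclusion $U_t\cdot g\subset\{\text{perfect}\}$; you additionally make explicit, via Remark~\ref{rem:t_max}, that the new $t_{\max}$ equals $t$, which the paper leaves implicit.

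Step~2 takes a different route. The paper also starts from uniqueness, but it uses the whole isomorphism $\X\cong\X'$ rather than only the thickness. The new chart coordinates $x_1'=x_1+\Pi^{-2}\alpha$ and $y_1'=y_1+\Pi^{-1}\gamma x_1+\Pi^{-3}\beta$ must be regular at the generic point of the exceptional component. This directly gives $v(\alpha)\ge 2t$, $v(\beta)\ge 3t$, $v(\gamma)\ge t$. (It tacitly uses that the local ring there is a discrete valuation ring inducing the Gauss valuation in $x_1,y_1$.) You keep only the numerical consequence $t'=t$. You then replace the valuation-theoretic step by an explicit computation at the intermediate level $s$, together with the fact that no nontrivial element of $U$ fixes $y^2=x^3$. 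Your coefficient check of that fact is correct in all characteristics, including $p=2,3$.

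Your route is more elementary and actually proves more. Since $F_s$ is integral, the new coordinates satisfy $t_{\max}\ge s$, so your key claim gives $t_{\max}(\lexp{h}{(x,y)})=s$ exactly. Combined with Step~1, this means $t_{\max}(\lexp{h}{(x,y)})=\min(t,s)$ on all of $U_0^+$, with $s$ as in your Step~2. Moreover, for $h\notin U_t$ the exceptional cubic of the blow-up at level $t_{\max}=s$ is a Weierstrass translate of the cuspidal cubic, hence cuspidal, so $h$ is directly seen not to be perfect. Thus the appeal to uniqueness of the stable resolution in your Step~2 can be dropped altogether. The paper's argument is shorter and more conceptual, but it depends on that uniqueness.
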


\begin{proof}	
Let $g'\in U_0^+$ and set $h:=g'g^{-1}$. We choose an extension $L/K$ such that $g,g',h\in U(L)$ and write $h=(\alpha,\beta,\gamma)$, with $\alpha,\beta,\gamma\in\m_L$. Put  $(x_g,y_g)=\lexp{g}{(x,y)}$, $(x_{g'},y_{g'})=\lexp{{g'}}{(x,y)}$. Then 
\begin{equation}
  x_{g'} = x_g +\alpha,\quad y_{g'} = y_g + \gamma x_g + \beta.	
\end{equation}
After enlarging $L$ we may also assume that there exists an element $\Pi\in L$ with $v_L(\Pi)=t$. 

Let $\X\to\X_0$ be the blow-up constructed using the coordinate system $(x_g,y_g)$ and the parameter $t$. Our assumption that $g$ is perfect means that $\X\to\X_0$ is the stable resolution of the cusp $P$. By construction, the affine chart of $\X$ containing the standard affine chart of the exceptional component $\Xb_1$ is $\Spec \OO_L[x_1,y_1]/(F)$, where 
\[
    x_1 := \Pi^{-2}x_g,\;\; y_1:= \Pi^{-3}y_g,
\]
and where $F=\Pi^{-6}(\lexp{g}{f})\in\OO_L[x_1,y_1]$ (see the proof of Proposition \ref{prop:cusp_resolution}.)

Assume that $g'$ is also perfect. Then the blow-up $\X'\to\X_0$ defined using the coordinates $(x_{g'},y_{g'})$ and the constant $t'=t_{\max}(\lexp{{g'}}{(x,y)})$ is also a stable resolution. The uniqueness of the stable resolution shows that there exists a (unique) isomorphism $\X\iso\X'$ extending the identity on the generic fiber. It follows that $t=t'$ (by Proposition \ref{prop:cusp_resolution} (v), this number is equal to the thickness of the node $\tilde{P}$, which is an invariant under isomorphism). Moreover, as elements of $K[x,y]$, the coordinates 
\[
 x'_1 := \Pi^{-2}x_{g'} = x_1 + \Pi^{-2}\alpha,\quad y'_1:= \Pi^{-3}y_{g'} = y_1 + \Pi^{-1}\gamma x_1 + \Pi^{-3}\beta,
\]
must be regular on the affine chart
\(\Spec \OO_L[x_1,y_1]/(F)\), in particular at the generic point of the
exceptional component. This implies
\[
    v_L(\alpha) \geq 2t,\;\; v_L(\beta) \geq 3t,\;\; v_L(\gamma)\geq t,
\]
and so $g'\in U_t\cdot g$. 

Conversely, assume that $g'\in U_t\cdot g$. Then the proof of Lemma \ref{lem:optimal_iff_resolution} shows that $t\leq t_{\max}(x_{g'},y_{g'})$. Moreover, if $\X'\to\X_0$ denotes the blow-up constructed using the coordinates $(x_{g'},y_{g'})$ and the constant $t$, then the exceptional divisor $\Xb_1'$ of $\X'\to\X_0$ is isomorphic to $\Xb_1$. Indeed, if $\Xb_1$ is given by the Weierstrass equation $\bar{F}(x,y)=0$, then $\Xb_1'$ is given by the equation
\[
      \bar{F}' = \bar{F}(x+\bar{\delta}, y+\bar{\zeta} x + \bar{\epsilon}),
\] 
where $\bar{\delta},\bar{\epsilon},\bar{\zeta}\in k$ are the residue classes of 
\[
   \delta :=\Pi^{-2}\alpha, \;\; \epsilon :=\Pi^{-3}\beta,\;\; \zeta:=\Pi^{-1}\gamma,
\]
see \eqref{eq:g^Fb}. This means that $\Xb_1'$ is also semistable and hence $\X'\to\X_0$ is a stable resolution and $g'$ is perfect. 
\end{proof}	

From now on, $t$ always denotes the value $t=t_{\max}(\lexp{g}{(x,y)})$, where $g\in U_0^+$ is perfect, and we set
\[
    U^{\rm perf} := U_t\cdot g\subset U^\rig.
\]
This is an affinoid subdomain, independent of the choice of $g$.

We say that the ground field $K$ is {\em adapted} if there exists $g\in U^{\rm perf}(K)$ and an element $\Pi\in K$ with $v_K(\Pi)=t$. This is true after replacing $K$ by a suitable finite extension, and then $\Pi$ will always denote such an element. A general point $g'\in U^{\rm perf}$ is then of the form $g'=h\cdot g$, where $h$ is the unipotent matrix with entries
\begin{equation}
    \alpha= \Pi^2\delta,\;\; \beta = \Pi^3\epsilon,\;\; \gamma=\Pi\zeta,
\end{equation}
with $\delta,\epsilon,\zeta\in\OO_L$, for some  finite extension $L/K$, depending on $g'$. This gives a presentation of $U^{\rm perf}$ as a closed rigid polydisk,
\[
   U^{\rm perf} = {\rm Spm}\,K\langle \delta,\epsilon,\zeta\rangle,
\] 
with parameters $\delta,\epsilon,\zeta$. Equivalently, we may regard 
\(U^{\rm perf}\) as the tube of the special fiber in the
algebraic $\OO_K$-model
\[
 \U^{\rm perf} :=\Spec\OO_K[\delta,\epsilon,\zeta]
\]
of $U$. We shall identify the affinoid reduction of $U^{\rm perf}$ with
\[
\bar U^{\rm perf}=\Spec k[\delta,\epsilon,\zeta].
\]
Note that such a presentation exists only if $K$ is adapted, and depends on the choice of the element $g$. 

As in \eqref{eq:f'}, write 
\[
	f^g =  \sum_{i,j} A_{i,j}(g)x_g^iy_g^j,
\]
with $A_{i,j}\in K[\alpha,\beta,\gamma]$ and $(x_g,y_g)=\lexp{g}{(x,y)}$. If $g\in U^{\rm perf}$, then
\[
F := \Pi^{-6} f^g = \sum_{i,j} B_{i,j}(g) x_1^iy_1^j
\]
with
\[
B_{i,j} := \Pi^{2i+3j-6} A_{i,j} \in K[\alpha,\beta,\gamma],
\] 
is a polynomial in $x_1:=\Pi^{-2}x_g$, $y_1:=\Pi^{-3}y_g$ with integral coefficients, and is the defining equation for the resolution $\X\to\X_0$ corresponding to the coordinates $(x_g,y_g)$, on the affine chart $\Spec\OO_K[x_1,y_1]$. 
In particular, the exceptional fiber $\Xb_1$ is given by the Weierstrass equation
\begin{equation} \label{eq:Fb_new}
  \bar{F} = \sum_{i,j} \bar{B}_{i,j}(\bar{g}) x^iy^j,
\end{equation}
where $\bar{B}_{i,j}\in k[\delta,\epsilon,\zeta]$ denotes the reduction of $B_{i,j}$ and $\bar{g}\in\bar{U}^{\rm perf}$ the  specialization of $g$. See the proof of Lemma \ref{lem:optimal_iff_resolution}. 

\medskip
We consider the closed subscheme $S\subset U=\Spec K[\alpha,\beta,\gamma]$ defined by the three equations
\begin{equation} \label{eq:A_equations}
	A_{i,j}(\alpha,\beta,\gamma) = 0, \quad (i,j) \in I,
\end{equation}
and where $I$ is defined by \eqref{eq:I_def}. 
 
Let $S_\et\subset S$ be the locus where the morphism
$S\to \Spec K$ is \'etale. Since $S_\et\to \Spec K$ is
\'etale and of finite type, it is finite; via $S\subset U$, we may therefore
view it as a finite closed subscheme of $U$. In
\S\ref{subsec:resolve_cusp} this finite scheme is computed explicitly by
saturating the defining ideal of $S$ with respect to the relevant Jacobian
determinant.

We also consider the subset $S^{\rm adm}:=S^\rig\cap U_0^+\subset S^\rig$. We claim that 
\begin{equation} \label{eq:S^adm_in_U^perf}
	S^{\rm adm} \subset U^{\rm perf}.
\end{equation}
Indeed, let \(g\in S^{\rm adm}\). Then the coordinate system
\(\lexp{g}{(x,y)}\) satisfies Condition \ref{cond:coeffs} (a), hence also
Condition \ref{cond:coeffs} (b). By Proposition
\ref{prop:cond_b_is_sufficient}, \(g\) is perfect. The preceding proposition
therefore gives \(g\in U^{\rm perf}\).

Finally, we consider the intersection
\[
  S_\et^{\rm adm}:=S_\et\cap S^{\rm adm}.
\]
As $S_\et$ is a finite union of closed points of $U$, the same is true for $S_\et^{\rm adm}$.

\begin{lemma} \label{lem:S_et^adm}
	$S_\et^{\rm adm}$ is nonempty.
\end{lemma}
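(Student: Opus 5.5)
We work with an adapted ground field $K$. We fix a perfect $g_0\in U^{\rm perf}(K)$, so that $U^{\rm perf}={\rm Spm}\,K\langle\delta,\epsilon,\zeta\rangle$ via $g=h\cdot g_0$. The idea is to lift a point of the strong rigidifying scheme of the exceptional cubic to a genuine solution of \eqref{eq:A_equations} by Hensel's lemma, and then check that this solution is étale and lies in $U_0^+$.

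\emph{Step 1: reduction.} Restricted to $U^{\rm perf}$, the functions $B_{i,j}$ with $(i,j)\in I$ lie in $\OO_K\langle\delta,\epsilon,\zeta\rangle$. By \eqref{eq:g^F_delta_epsilon_gamma}, which is now an exact polynomial identity $F_g=F_{g_0}(x_1+\delta,y_1+\zeta x_1+\epsilon)$, they are in fact polynomials in $\delta,\epsilon,\zeta$ with integral coefficients. Their reductions $\bar B_{i,j}\in k[\delta,\epsilon,\zeta]$ are exactly the coefficient functions $A_i$ of \S\ref{sec:weierstrass} for the cubic $C:=\Xb_1$ attached to $g_0$. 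Note that $\bar B_{i,j}$ for $(i,j)\in I$ correspond to $\bar a_i$ with $i\in\{2,4,6\}$ if $p=2$ and $i\in\{1,3,6\}$ if $p\neq2$, using $\bar a_1=\bar b_{1,1}$, etc. One should also check that the sign and ordering conventions match, e.g.\ that the $\bar U$-variables $(\bar\delta,\bar\zeta,\bar\epsilon)$ correspond to $(\alpha,\gamma,\beta)$ in Definition \ref{def:W-group}. Since $g_0$ is perfect, $C$ is semistable. Proposition \ref{prop:srwnf_is_etale} then gives a point $\bar h\in S'(k)$ at which the Jacobian $\det\partial(\bar B_{i,j})_{(i,j)\in I}/\partial(\delta,\epsilon,\zeta)$ is nonzero.

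\emph{Step 2: Hensel lifting.} The three integral polynomials $B_{i,j}$, $(i,j)\in I$, in three variables have a simple zero modulo $\m_K$ at $\bar h$. By the multivariable Hensel lemma over the complete ring $\OO_K$ (which has algebraically closed residue field, so $\bar h$ is $k$-rational), there is a unique $(\delta,\epsilon,\zeta)\in\OO_K^3$ reducing to $\bar h$ with $B_{i,j}=0$ for all $(i,j)\in I$. Since $B_{i,j}=\Pi^{2i+3j-6}A_{i,j}$, the corresponding $g=h\cdot g_0$ satisfies $A_{i,j}(g)=0$, i.e.\ $g\in S(K)$. The Jacobian of the $A_{i,j}$ with respect to $(\alpha,\beta,\gamma)$ differs from that of the $B_{i,j}$ with respect to $(\delta,\epsilon,\zeta)$ by a nonzero power of $\Pi$. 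It is therefore a unit times a power of $\Pi$, in particular nonzero, so $S\to\Spec K$ is étale at $g$ and $g\in S_\et$.

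\emph{Step 3: $g\in U_0^+$.} We have $g_0\in U_0^+$, and $h\in U_t\subset U_0^+$ because $t>0$. Since $U_0^+$ is a subgroup, $g\in U_0^+$, hence $g\in S^{\rm adm}\cap S_\et$.

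The main obstacle is Step 1, specifically getting the identification of $\bar B_{i,j}|_{U^{\rm perf}}$ with the rigidifying equations of \S\ref{sec:weierstrass} exactly right. Everything else is formal once the Jacobian of the reduced system is known to be nonvanishing at a $k$-point.
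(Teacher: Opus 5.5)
Your proposal is correct and follows the same route as the paper. You identify the reductions $\bar B_{i,j}$, $(i,j)\in I$, on $\U^{\rm perf}$ with the rigidifying equations of the semistable exceptional cubic, take an étale point of $S'$ from Proposition~\ref{prop:srwnf_is_etale}, Hensel-lift it over $\OO_K$, and spell out explicitly what the paper packages as ``étaleness commutes with base change'' (your Jacobian comparison) and the inclusion of the lifted point in $S^{\rm adm}$ (your subgroup argument for $U_0^+$).
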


\begin{proof}
It suffices to prove nonemptiness after a finite extension of \(K\).
We may therefore assume that $K$ is adapted. 

Define, using \eqref{eq:S^adm_in_U^perf}, the closed subscheme $\Sc\subset\U^{\rm perf}$ by the equations
\[
B_{i,j}(\delta,\epsilon,\zeta) = 0, \quad (i,j) \in I.
\]
Its generic fiber is $\Sc_K=S^{\rm adm}$, and its special fiber 
$\bar{S}\subset\bar{U}^{\rm perf}$ is defined by 
\begin{equation} \label{eq:Sb_equation}
	\bar{B}_{i,j}(\delta,\epsilon,\zeta) = 0, \quad (i,j) \in I.
\end{equation}
Let $\Sc'\subset\Sc$ be the open subscheme defined by 
\[
B_{i,j}(\delta,\epsilon,\zeta) \neq 0, \quad \text{for $(i,j)=(0,1)$ if $p=2$ and $(i,j)=(1,0)$ otherwise.}
\]
Let $\bar{S}'\subset\bar{S}\subset\Ub$ denote the special fibers of $\Sc$ and $\Sc'$. By construction, $\bar{S}'\subset\bar{S}$ is defined by the open condition
\begin{equation} \label{eq:Sb'_equation}
	\bar{B}_{i,j}(\delta,\epsilon,\zeta) \neq 0, \quad \text{for $(i,j)=(0,1)$ if $p=2$ and $(i,j)=(1,0)$ otherwise.}
\end{equation}
By Definition \ref{def:RWNF}, the validity of \eqref{eq:Sb_equation} means that the equation \eqref{eq:Fb_new} for $\Xb_1$ is in weak rigidified Weierstrass normal form if $g\in S^{\rm adm}$. Moreover, the additional condition \eqref{eq:Sb'_equation} - which holds if $\bar{g}\in\bar{S}'$ - means that it is in strong rigidified Weierstrass normal form. 

With this preparation, Proposition \ref{prop:srwnf_is_etale} shows that \(\bar S'\) is
nonempty and finite étale over \(k\). As \'etaleness is an open condition, $\Sc'$ is \'etale over $\OO_K$ in a neighborhood of $\bar{S}'$. Choose a point
\(\bar g\in \bar S'(k)\). Since \(\OO_K\) is strictly henselian, \(\bar g\) lifts uniquely to a point $g\in (\Sc')_K\subset S^{\rm adm}$. As a point on $S$, it lies in $S_\et$ because \'etaleness commutes with arbitrary base change. Hence it lies in \(S_\et^{\rm adm}\).
\end{proof}

Lemma \ref{lem:S_et^adm} proves the required existence statement on the
level of the rigidifying scheme. Translating this back into coordinates gives
exactly the promised conclusion: after a finite extension, there is a strict
unipotent coordinate change satisfying the equations \eqref{eq:A_equations},
and hence Condition \ref{cond:coeffs} (a). The following corollary records this
immediate consequence.

\begin{corollary} \label{cor:existence_of_solutions}
  There exist a finite separable extension $L/K$ and a solution $(\alpha,\beta,\gamma)$ in $\m_L^3$ of the system of equations \eqref{eq:A_equations}. Given such a solution, and after application of the corresponding strict unipotent coordinate change, the weighted blow-up from \S \ref{subsec:blow-up}, with parameter $t=t_{\max}(x,y)$, is the stable resolution of the cusp $P$.
\end{corollary}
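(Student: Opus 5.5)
The corollary follows from Lemma~\ref{lem:S_et^adm} together with Proposition~\ref{prop:cond_b_is_sufficient}; what remains is to check separability and integrality. By Lemma~\ref{lem:S_et^adm}, after a finite extension used only to prove non-emptiness, the set $S_\et^{\rm adm}=S_\et\cap S^{\rm adm}$ contains a closed point $g$ of $U$. I will take $L:=K(g)$, the residue field of $g$ as a point of the $K$-scheme $S$.

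\emph{Separability.} Since $g\in S_\et$, the morphism $S\to\Spec K$ is étale at $g$. Hence $K(g)/K$ is a finite separable extension. The extension used in the proof of Lemma~\ref{lem:S_et^adm} to make $K$ adapted is irrelevant here: the point $g$ is a closed point of the $K$-scheme $S$, and $S_\et$ is formed over $K$. Étaleness of $S_{K'}\to\Spec K'$ at a point above $g$ descends to étaleness of $S$ at $g$ by fpqc descent. So the residue field of the image point is separable over $K$. I expect this bookkeeping about which base field the étale locus lives over to be the only real subtlety, and I will treat it by noting that $S_\et$ commutes with base change.

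\emph{Integrality.} Since $g\in S^{\rm adm}=S^\rig\cap U_0^+$, its coordinates $(\alpha,\beta,\gamma)\in L^3$ satisfy $v_L>0$, i.e.\ they lie in $\m_L^3$. Because $g\in S$, they solve \eqref{eq:A_equations}.

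\emph{Second assertion.} Let $(\alpha,\beta,\gamma)\in\m_L^3$ be any solution. The coordinate change $g$ is strict unipotent, so $g\in U_0^+$ and $g\in S^{\rm adm}$. Two facts are needed about the new coordinates $(x',y')=\lexp{g}{(x,y)}$:
\begin{enumerate}
\item They are admissible. Since $g\in U_0^+$, the coefficients of $(2,3)$-weight $<6$ keep positive valuation, and the weight-six coefficients keep unit valuation: their reductions are unchanged, because $\bar g=1$. After the unit rescaling of Remark~\ref{rem:scaling}, which does not destroy the vanishing $a_{i,j}=0$ for $(i,j)\in I$, they are strictly admissible. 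Alternatively, this follows from Lemma~\ref{lem:t_max_admissible}, since $U_0^+\subset V$.
\item They satisfy Condition~\ref{cond:coeffs}(a), since $A_{i,j}(g)=0$ for $(i,j)\in I$. Hence Condition~\ref{cond:coeffs}(b) holds as well.
\end{enumerate}
Proposition~\ref{prop:cond_b_is_sufficient} then shows that the weighted blow-up with $t=t_{\max}(x',y')$ is the stable resolution. Adjoining $\Pi$ with $v(\Pi)=t$ may require a further extension. This extension can be taken tamely or separably, e.g.\ by adjoining a root of $\pi^m$ times a unit when $p\nmid$ denominator; in general I would note that the statement only concerns the existence of $L$ for the solution, and that the blow-up is defined over the extension containing $\Pi$. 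Alternatively, $R$ can be described without $\Pi$ using the weight filtration by valuation.
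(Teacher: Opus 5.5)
Your proposal is correct and follows the paper's route. A point of $S_\et^{\rm adm}$ from Lemma~\ref{lem:S_et^adm} has a residue field that is separable over $K$ (by \'etaleness) and coordinates in $\m_L^3$, and then Condition~\ref{cond:coeffs}(a) $\Rightarrow$ (b) together with Proposition~\ref{prop:cond_b_is_sufficient} gives the stable resolution. Your extra bookkeeping (descending the point from the adapted extension back to $K$, and checking strict admissibility of the new coordinates via $\bar g=1$ and Remark~\ref{rem:scaling}) is exactly what the paper leaves implicit when it calls the corollary an immediate consequence.
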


\begin{definition} \label{def:strongly_rigidified}
We call a point \(g\in S_\et^{\rm adm}\) strongly rigidified if, after adjoining
an element \(\Pi\) with
\[
   v(\Pi)=t_{\max}(\lexp{g}{(x,y)}),
\]
the exceptional Weierstrass cubic attached to the coordinates
\(\lexp{g}{(x,y)}\) is in strong rigidified Weierstrass normal form.
\end{definition}

For instance, the point $g$ produced in the proof of Lemma \ref{lem:S_et^adm} has this property. The following lemma will be used in the next subsection to control the field of
definition of  a strongly rigidified point $g\in S_\et^{\rm adm}$.

\begin{lemma} \label{lem:unique_hensel_lift}
  Let $g_1,g_2\in S_\et^{\rm adm}$. Assume that $g_1$ is strongly rigidified, and that $g_1g_2^{-1}\in U_t^+$, where \(t:=t_{\max}(\lexp{{g_1}}{(x,y)})\). Then $g_1=g_2$.
\end{lemma}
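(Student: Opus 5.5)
Both $g_1$ and $g_2$ lie in $S^{\rm adm}\subset U^{\rm perf}=U_t\cdot g_1$ by \eqref{eq:S^adm_in_U^perf} and Proposition \ref{prop:perfect}, and the proof takes place inside this polydisk. Replace $K$ by a finite extension that is adapted, with $g:=g_1$ as base point and $\Pi$ chosen so that $v_K(\Pi)=t$. Uniqueness of the points in question survives base change, so this is harmless. In the parameters $(\delta,\epsilon,\zeta)$ of $U^{\rm perf}={\rm Spm}\,K\langle\delta,\epsilon,\zeta\rangle$, the point $g_1$ corresponds to $(0,0,0)$. The hypothesis $g_1g_2^{-1}\in U_t^+$ says that $h:=g_2g_1^{-1}$ has entries with $v(\alpha)>2t$, $v(\beta)>3t$ and $v(\gamma)>t$; since $U_t^+$ is a subgroup, the inverse of $g_1g_2^{-1}$ lies in it too. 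Hence the parameters $(\delta,\epsilon,\zeta)$ of $g_2$ all lie in the maximal ideal. In other words, $g_1$ and $g_2$ have the same specialization $\bar g=0\in\bar U^{\rm perf}(k)$.

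Next, consider the integral model $\Sc\subset\U^{\rm perf}=\Spec\OO_K[\delta,\epsilon,\zeta]$ cut out by $B_{i,j}=0$ for $(i,j)\in I$, as in the proof of Lemma \ref{lem:S_et^adm}. Both $g_1$ and $g_2$ are $\OO_L$-points of $\Sc$ for a suitable finite extension $L$, and both reduce to $\bar g$. Since $g_1$ is strongly rigidified, the exceptional cubic $\bar F$ attached to $g_1$ is in strong RWNF. Consequently $\bar g$ lies in $\bar S'$: the reduced coefficient $\bar B_{0,1}$ (for $p=2$) or $\bar B_{1,0}$ (for $p\neq2$) is nonzero at $\bar g$. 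By Proposition \ref{prop:srwnf_is_etale}, $\bar S'$ is finite étale over $k$ at $\bar g$. Concretely, the Jacobian of $(\bar B_{i,j})_{(i,j)\in I}$ with respect to $(\delta,\epsilon,\zeta)$ is invertible at $\bar g$. This holds because $\bar B_{i,j}(\bar g')$ are exactly the coefficients of $\bar F(x+\bar\delta,y+\bar\zeta x+\bar\epsilon)$ by \eqref{eq:g^Fb}, which is precisely the computation made in that proposition. Thus $\Sc$ is étale over $\OO_K$ in a neighbourhood of $\bar g$; this also follows from the Jacobian criterion applied to three equations in three variables.

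Finally, I will apply the multivariate Hensel lemma, in the form of uniqueness of lifts for an étale scheme over a henselian base. An $\OO_L$-point of $\Sc$ reducing to $\bar g$ is unique, since $\OO_L$ is henselian and $\Sc$ is étale over $\OO_K$ at $\bar g$; equivalently, the formal completion of $\Sc$ at $\bar g$ is $\Spf\OO_K$. The point $g_2$ also lies in the open set $\Sc'$, because $B_{0,1}$ resp.\ $B_{1,0}$ reduces to the same nonzero value at $g_2$ as at $g_1$, so it is in fact a unit. Hence $g_1=g_2$.

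The step I expect to be the main obstacle is verifying that the parametrized condition $g_1g_2^{-1}\in U_t^+$ really gives the same specialization in the chart centred at $g_1$. This needs care about left versus right cosets and the group law on $U$, and it must work whichever product order the hypothesis uses. The check is a short matrix computation: for upper unipotent matrices, the product of an element of $U_t^+$ with an element of $U_t$ stays in the same residue class, using the valuation bounds $2t,3t,t$ and the fact that $U_t^+\subset U_t$ is normal. Everything else reduces to Proposition \ref{prop:srwnf_is_etale} together with Hensel's lemma.
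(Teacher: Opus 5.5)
Your proposal is correct and follows the same route as the paper. Both points specialize to the same point $\bar g\in\bar S'$, $\Sc$ is \'etale over $\OO_K$ at $\bar g$ by the Jacobian computation of Proposition~\ref{prop:srwnf_is_etale}, and uniqueness of Hensel lifts then forces $g_1=g_2$. You spell out what the paper leaves implicit: passing to an adapted field with base point $g_1$, and observing that $g_2=(g_1g_2^{-1})^{-1}g_1$ already has the form $h\cdot g_1$ with $h\in U_t^+$, so the coset bookkeeping you flag as the main obstacle is in fact immediate.
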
	

\begin{proof}
The assumptions imply that $g_1,g_2$ specialize to the same point $\bar{g}\in\bar{S}'$. The proof of Lemma \ref{lem:S_et^adm} shows that $\Sc'\to\Spec\OO_K$ is \'etale over $\bar{g}$. From the uniqueness of the Hensel lift it follows that $g_1=g_2$.
\end{proof}	

\subsection{The monodromy action} \label{subsec:monodromy}

We will see in the next section that the previous result allows us to compute a stable resolution explicitly. Here we want to show that this method automatically returns the {\em minimal} field extension $L/K$ over which a stable resolution exists. An explicit example is discussed in \cite[\S 4.1, Proposition 4.1]{SSW}

We first recall the definition and the main properties of the monodromy action.

\begin{proposition}\label{prop:monodromy}
	Let \((\X_0,P)\) be a smoothing of a plane curve singularity, as in \S \ref{subsec:stable_resolution}.
	Let $L/K$ be a finite separable extension over which the stable resolution of $P$ exists. Let
	\[
	    \X\to\X_{0,L}
	\]
	be the stable resolution, defined over $L$, with tail $(E,\{\tilde{P}_1,\ldots,\tilde{P}_r\})$.
	\begin{enumerate}
	\item 
	  Assume that $L/K$ is Galois and set $\Gamma:=\Gal(L/K)$. Then the tautological action of $\Gamma$ on $X_L$ extends uniquely to an action on $\X$; its restriction to the tail $E$ is a $k$-linear action 
	  \[
	      \phi:\Gamma \to\Aut_k(E,\{\tilde{P}_1,\ldots,\tilde{P}_r\}),
	  \] 
	  called the {\em monodromy action}.
	 \item 
	   The fixed field $L^{\min}:=L^{\ker(\phi)}$ is the minimal extension over which the stable resolution exists. In particular, $L=L^{\min}$ if and only if the monodromy action is faithful.
	\end{enumerate}
\end{proposition}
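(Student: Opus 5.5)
The plan is to derive both parts from the uniqueness of the stable resolution, namely that it is unique up to unique isomorphism over the generic fiber, as recalled in \S\ref{subsec:stable_resolution}.

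\emph{Part (i).} For $\sigma\in\Gamma$, let $\sigma$ act on $X_L=X\otimes_K L$ through the second factor. I form the conjugate model $\lexp{\sigma}{\X}$, which is $\X$ with its structure morphism composed with $\Spec\sigma:\Spec\OO_L\to\Spec\OO_L$. Since $\X_{0,L}$ is defined over $\OO_K$, this conjugate is again a modification of $\X_{0,L}$. It is an isomorphism away from $P$, its special fiber has the same shape, and its tail is the $\sigma$-twist of $E$. Because $k$ is algebraically closed and fixed by the inertia, this twist is still stably marked. So $\lexp{\sigma}{\X}\to\X_{0,L}$ is again a stable resolution, and uniqueness gives a unique isomorphism $\lexp{\sigma}{\X}\iso\X$ over $\X_{0,L}$ that extends the identity of $X_L$. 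Equivalently, this is a unique $\sigma$-semilinear automorphism $\tilde\sigma$ of $\X$ extending $\sigma$ on $X_L$. Uniqueness also forces $\widetilde{\sigma\tau}=\tilde\sigma\tilde\tau$, so we obtain an action. Each $\tilde\sigma$ preserves the special fiber and the exceptional divisor $E$, since it lies over $\X_{0,L}$ and $E$ is the fiber over $P$. It also permutes the points $\tilde P_i$, being the intersection points with the strict transform. On the special fiber the action is $k$-linear because the residue field extension is trivial. This defines $\phi$. Uniqueness of the extension follows from separatedness and flatness of $\X$: two extensions agree on the dense generic fiber and hence everywhere.

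\emph{Part (ii).} Set $H=\ker\phi$ and $M=L^H$. First I show the stable resolution descends to $M$. The group $H$ acts semilinearly on the projective $\X_{0,L}$-scheme $\X$, so the quotient $\X/H$ exists. I compare it with the base change: the claim is that $\X/H$ is a stable resolution over $\OO_M$ and that $(\X/H)\otimes_{\OO_M}\OO_L\cong\X$. This is Galois descent for quasi-projective schemes with an $\OO_L/\OO_M$-semilinear action. The main obstacle is that $\OO_L/\OO_M$ is ramified, so descent is not automatic. A semilinear action that is trivial on the special fiber need not be effective for non-étale extensions. To handle this, I would work locally at the only interesting point, the node and the tail. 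Away from $E$ everything is already defined over $\OO_K$. Near $E$ I would use the explicit description: in the cusp case, the chart $\Spec\OO_L[x_1,y_1]/(F)$ together with the node chart. The alternative route is to let $\X_M$ be the stable resolution over some extension and compare. Concretely, I would take the normalization $\X'$ of $\X_{0,M}$ in the function field, blown up compatibly, i.e.\ $\X'=\X/H$. Then I would check that $\X'\otimes\OO_L\to\X$ is an isomorphism. It is finite and birational, and $\X$ is normal. Its special fiber identifies with $E/H=E$ near the tail, because $H$ acts trivially there. This gives the required isomorphism.

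Conversely, suppose the stable resolution exists over an intermediate field $M'$. By uniqueness, its base change to $L$ is $\X$, and then $\Gal(L/M')$ acts through automorphisms defined over $\OO_{M'}$. These act trivially on the special fiber, since the residue fields agree. So $\Gal(L/M')\subset H$, i.e.\ $M\subset M'$. Minimality follows, and the last assertion is the case $H=1$.
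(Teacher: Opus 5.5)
Your part (i), and the half of (ii) showing $\Gal(L/M')\subset\ker\phi$ whenever the resolution exists over some $M'\subset L$, are fine and follow the paper: both rest on uniqueness of the stable resolution. Like you, the paper reduces (ii) to showing that $\X/\ker(\phi)$ is the stable resolution over $L^{\min}$. The gap is in your justification of that descent step.

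\textbf{The comparison map.} The only natural map is $\X\to(\X/H)\otimes_{\OO_M}\OO_L$, induced by $\X\to\X/H$ and $\X\to\Spec\OO_L$. There is no map in the direction you write. This map is finite and birational, so it is an isomorphism if and only if its \emph{target} is normal; normality of $\X$ is irrelevant. Since the target is flat with normal generic fibre, this amounts to $(\X/H)_s$ being reduced.

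\textbf{Why your identification fails.} You assert that $(\X/H)_s$ ``identifies with $E/H=E$ because $H$ acts trivially there''. This is exactly what can fail, because forming $H$-invariants does not commute with reduction modulo $\m$. Let $\eta$ be the generic point of a component of $\X_s$ and $\eta'$ its image. Then $H$ is the inertia group at $\eta$, and $\kappa(\eta')\subset\kappa(\eta)$ is purely inseparable of some degree $f$. The component occurs in $(\X/H)_s$ with multiplicity $f$. If $p\nmid|H|$, averaging lifts of elements of $\kappa(\eta)$ gives $f=1$. For wild $H$, trivial action on $\kappa(\eta)$ does not force $f=1$.

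\textbf{A counterexample to your reasoning.} Take $\zeta_p\in M$ in mixed characteristic, $\varpi^p=\pi_M$, $L=M(\varpi)$. Let $\X=\PP^1_{\OO_L}$ in the coordinate $x'=(x-\varpi)/\varpi^2$. For $\sigma(\varpi)=\zeta_p\varpi$ one gets $\sigma(x')=\zeta_p^{-2}\bigl(x'+(1-\zeta_p)\varpi^{-1}\bigr)$. Since $v_L(1-\zeta_p)>1$, this is integral and reduces to $\bar x'$, so $H$ acts on $\X$ and trivially on $\X_s$. However, $x=\varpi+\varpi^2x'$ is $H$-invariant with $v_\eta(x)=1$, while $v_\eta(\pi_M)=p$. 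Hence $(\X/H)_s$ has multiplicity $p$; for instance $(x^p-\pi_M)/\pi_M^2$ reduces to $\bar x'^p$. So $\X$ does not descend to $\OO_M$. Your argument uses only triviality on the special fibre, so it would ``prove'' descent here too.

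\textbf{What is missing.} You need an argument that uses stability of the tail to show $f=1$ at the generic points of the components of $E$. Once you have that, Serre's criterion gives normality of $(\X/H)\otimes_{\OO_M}\OO_L$, and your finite-birational argument works with the arrow reversed. In the tame case averaging suffices. In general this step is the real content, which the paper takes from \cite[Proposition 1.5]{WeilRep}, applied to an invariant neighbourhood of the exceptional divisor.
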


\begin{proof}
  Claim (i) follows immediately from the uniqueness of the stable resolution. See e.g.\ \cite[Corollary 1.6]{Temkin2010}. For Part (ii) it suffices to show that the quotient scheme
  \[
      \X/\ker(\phi)
  \]
  is the stable resolution of $\X_{0,L^{\min}}$. This descent step is standard; e.g.\ it is the same argument as in
  \cite[Proposition 1.5]{WeilRep}, applied to a sufficiently small invariant
  neighborhood of the exceptional divisor. 
\end{proof}

We retain the notation from the previous subsection. In particular, we fix strictly admissible coordinates $(x,y)$. We do not assume that $(x,y)$ is optimal, nor do we assume that the ground field $K$ is adapted; instead, it will be fixed throughout.  

Let $g\in S_\et^{\rm adm}$ be strongly rigidified (Definition \ref{def:strongly_rigidified}). Let $L_0:=K(g)$ denote the residue field of $g$. Explicitly, $L_0=K(\alpha,\beta,\gamma)$, where $(\alpha,\beta,\gamma)$ is a solution to the system of equations \eqref{eq:A_equations} such that $\alpha,\beta,\gamma\in\m_{L_0}$. We identify $g$ with the unipotent matrix with entries $\alpha,\beta,\gamma$. By Proposition \ref{prop:cond_b_is_sufficient}, the coordinates
\((x_g,y_g)=\lexp{g}{(x,y)}\) give the stable resolution; in other words, \(g\) is perfect.

Put $t:=t_{\max}(x_g,y_g)$, and let 
\[
     e:= \min\{ n\geq 1 \mid n\cdot t \in v_{L_0}(L_0^\times)\}
\] 
be the smallest integer such that $e\cdot t$ lies in the value group of $L_0$.

The rest of this section is concerned with the proof of the following theorem. 

\begin{theorem} \label{thm:L_is_minimal_extension}
We use the notation and assumptions from above. In particular, $g\in S_\et^{\rm adm}$ is strongly rigidified. 	
\begin{enumerate}
\item 
  The positive integer $e$ is invertible in the residue field $k$.
\item 
  Let $L/L_0$ be the (unique) totally and tamely ramified extension $L/L_0$ of degree $e$. Then $L/K$ is the minimal extension over which a stable resolution of the cusp $P$ exists.
\end{enumerate}	
\end{theorem}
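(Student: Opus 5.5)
The plan is to fix a finite Galois extension \(M/K\) that contains \(L_0\) and an element \(\Pi\) with \(v(\Pi)=t\). We then compute the monodromy action of \(\Gamma:=\Gal(M/K)\) (Proposition \ref{prop:monodromy}) explicitly in the coordinates \(x_1=\Pi^{-2}x_g\), \(y_1=\Pi^{-3}y_g\) on the tail \(\Xb_1\). Since \(S\) is defined over \(K\), for \(\sigma\in\Gamma\) the point \(\sigma(g)\) again lies in \(S_\et^{\rm adm}\), is strongly rigidified, and is perfect. By Proposition \ref{prop:perfect} we have \(\sigma(g)g^{-1}\in U_t\), so
\[
x_{\sigma g}=x_g+\Pi^2\delta,\qquad y_{\sigma g}=y_g+\Pi\zeta x_g+\Pi^3\epsilon,
\]
with \(\delta,\epsilon,\zeta\) integral. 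The automorphism \(\sigma\) sends the function \(x_1\) to \(\sigma(\Pi)^{-2}x_{\sigma g}\) and \(y_1\) to \(\sigma(\Pi)^{-3}y_{\sigma g}\). Writing \(c_\sigma\in k^\times\) for the residue of the unit \(\sigma(\Pi)/\Pi\), the monodromy action on \(\Xb_1\) is therefore the Weierstrass-group element
\[
x_1\mapsto c_\sigma^{-2}(x_1+\bar\delta),\qquad y_1\mapsto c_\sigma^{-3}(y_1+\bar\zeta x_1+\bar\epsilon).
\]
By Proposition \ref{prop:WNF}(ii) and Corollary \ref{cor:wnf_stabilizer}, this element of \(W\) is uniquely determined by the automorphism. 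Hence \(\phi(\sigma)=1\) if and only if \(c_\sigma=1\) and \(\bar\delta=\bar\epsilon=\bar\zeta=0\).

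Key step for minimality. Suppose \(\phi(\sigma)=1\). Then \(\sigma(g)g^{-1}\in U_t^+\), and Lemma \ref{lem:unique_hensel_lift} gives \(\sigma(g)=g\), so \(\sigma\) fixes \(L_0=K(g)\). Moreover \(\sigma(\Pi)/\Pi\equiv 1\) modulo \(\m_M\).

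Granting (i), we choose \(\Pi\) so that \(\Pi^e\in L_0\). This is possible because \(et=v(\pi_0^a)\) for some \(\pi_0\in L_0\), and we may take \(\Pi\) to be an \(e\)-th root of \(\pi_0^a\). The extension \(L=L_0(\Pi)\) is then the tame totally ramified extension of degree \(e\); it is unique because \(k\) is algebraically closed. For \(\sigma\) fixing \(L_0\), the quotient \(\sigma(\Pi)/\Pi\) is an \(e\)-th root of unity. Since \(p\nmid e\), reduction is injective on \(\mu_e\), so \(c_\sigma=1\) forces \(\sigma(\Pi)=\Pi\). Thus \(\ker\phi\subset\Gal(M/L)\), i.e.\ \(L\subset L^{\min}\). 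Conversely, the stable resolution is already defined over \(L\), since \(g\in U(L_0)\) and \(\Pi\in L\). Hence \(L^{\min}\subset L\), and (ii) follows.

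Part (i) comes first logically. Restrict to \(\sigma\in\Gal(M/L_0)\), where \(M\) now contains an arbitrary \(\Pi\) with \(v(\Pi)=t\). Such \(\sigma\) fix \(g\), so they act on the tail by \(x_1\mapsto c_\sigma^{-2}x_1\), \(y_1\mapsto c_\sigma^{-3}y_1\). The map \(\sigma\mapsto c_\sigma\) is a homomorphism into \(k^\times\) (it is the tame character attached to \(\Pi\)), and its kernel acts trivially on the tail. By Proposition \ref{prop:monodromy}(ii), applied over \(L_0\), the stable resolution is therefore defined over the fixed field \(T\) of this kernel, which is a tame (cyclic, prime-to-\(p\)) extension of \(L_0\). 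Now the thickness \(t\) of the node \(\tilde P\) (Proposition \ref{prop:cusp_resolution}(v)) lies in \(v(T^\times)\), because the node has a local equation \(uv=a\) with \(a\in T\). Since \([v(T^\times):v(L_0^\times)]\) is prime to \(p\), we get \(p\nmid e\).

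I expect the main obstacle to be making rigorous the identification of \(\sigma\)'s action on \(\Xb_1\) with the displayed Weierstrass transformation. The concern is that the semilinear Galois action on \(\X\) must be correctly converted into a \(k\)-linear automorphism of the tail, and that the passage between \(x_g\) and \(x_{\sigma g}\) via Proposition \ref{prop:perfect} must be tracked with the exact normalization of \(\Pi\). A secondary point is the descent claim that the thickness lies in the value group of any field of definition, which is immediate from the local form of the node.
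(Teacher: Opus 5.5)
Your proposal is correct and follows essentially the paper's proof. It uses the same ingredients: the explicit monodromy computation in the coordinates $x_1=\Pi^{-2}x_g$, $y_1=\Pi^{-3}y_g$ (Lemma~\ref{lem:monodromy_explicit}), Proposition~\ref{prop:perfect} to get $\sigma(g)g^{-1}\in U_t$, Lemma~\ref{lem:unique_hensel_lift} to show that $\ker\phi$ fixes $L_0$, and the thickness/value-group constraint combined with the tame character $\sigma\mapsto\overline{\sigma(\Pi)/\Pi}$.

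The differences are only bookkeeping. For (i), the paper first proves $\ker\phi_0=\Gal(\tilde L/L)$ and then embeds $\Gal(L/L_0)$ into $k^\times$; you instead descend to the fixed field of the kernel of the tame character, a cyclic extension of prime-to-$p$ degree. For (ii), the paper reuses $\ker\phi_0=\Gal(\tilde L/L)$ to handle the $\Pi$-part of the kernel; you instead choose $\Pi^e\in L_0$ and use that reduction is injective on $\mu_e$.
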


For the proof of the theorem, we let $L/L_0$ be some totally ramified extension of degree $e$.\footnote{Existence is easy: if ${\rm char}(K)=0$, we can adjoin an $e$th root of a uniformizer. If ${\rm char}(K)=p>0$, take a suitable Artin-Schreier extension for the $p$-part of $e$.} Once we have proved (i), it follows that $L/L_0$ is tamely ramified and unique.  

There exists an element $\Pi\in L$ with $v_{L}(\Pi)=t$. By assumption, the blow-up
\[
   \X\to\X_{0,L}
\]	
corresponding to the coordinates $(x_g,y_g)$ and the choice of $\Pi$ is a resolution of the cusp $P$. Since the stable resolution is unique, up to unique isomorphism, and $t$ is the thickness of the node $\tilde{P}$ (Proposition \ref{prop:cusp_resolution} (v)), the existence of an element $\Pi$ with $v_L(\Pi)=t$ is a necessary condition for the existence of a stable resolution. This implies that $L/L_0$ is the minimal extension over which a resolution of $(\X_{0,L_0},P)$ exists. 

Let $\tilde{L}/K$ be the Galois closure of $L/K$, and consider the  monodromy action 
\[
   \phi:\Gamma:=\Gal(\tilde{L}/K) \to\Aut_k(\Xb_1),
\]
where $\Xb_1$ is the tail of the resolution. Let $\phi_0$ denote the restriction of $\phi$ to the subgroup $\Gal(\tilde{L}/L_0)$. 
By the preceding argument, Proposition \ref{prop:monodromy} (ii) implies 
\begin{equation} \label{eq:ker_phi_0}
  \ker(\phi_0) = \Gal(\tilde{L}/L).
\end{equation} 
It follows that $L/L_0$ is Galois and that $\phi_0$ induces a faithful monodromy representation
\[
    \bar{\phi}_0:\Gal(L/L_0)\inj \Aut_k(\Xb_1).
\]

Recall that the choice of the coordinates $(x_g,y_g)$ and the element $\Pi$ gives an explicit presentation of $\Xb_1$ as a projective plane cubic in Weierstrass normal form.\footnote{The particular choice of $g$ also forces $\Xb_1$ to be in strong rigidified Weierstrass normal form, but this fact is not directly used in the 
proof.} So the automorphism group of $\Xb_1$ is a finite subgroup of the Weierstrass group $W$,
\[
     \Aut_k(\Xb_1) \subset W\subset \PGL_3(k),
\]
see Definition \ref{def:W-group} and Corollary \ref{cor:wnf_stabilizer}.

\begin{lemma} \label{lem:monodromy_explicit}
  The monodromy action of $\Gamma:=\Gal(\tilde{L}/K)$ on the tail $\Xb_1$ of the resolution is given by
  \[
      \phi:\Gamma\to\Aut_k(\Xb_1), \quad
      \phi(\sigma) = \overline{T\cdot \lexp{\sigma}{T}^{-1}},
  \]	
  where 
  \begin{equation} \label{eq:T_matrix}
  	T := \begin{pmatrix} 
  		1 & & \\ & \Pi^2 & \\ & & \Pi^3
  	\end{pmatrix} \cdot g =
  	\begin{pmatrix} 
  		1 & \alpha & \beta \\ 0 & \Pi^2 & \Pi^2\gamma \\ 0 & 0  & \Pi^3
  	\end{pmatrix}.
  \end{equation}
\end{lemma}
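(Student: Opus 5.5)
The plan is to describe the Galois action on $\X$ concretely on the affine chart $\Spec\OO_{\tilde L}[x_1,y_1]/(F)$, which contains the standard affine part of $\Xb_1$, and then reduce modulo $\m_{\tilde L}$.

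First we record how the chart coordinates relate to the original ones. The coordinates on $\AA^2$ are written as row vectors $(z,x,y)$, and the chart coordinates are
\[
(1,x_1,y_1)=(1,\Pi^{-2}x_g,\Pi^{-3}y_g).
\]
Unwinding the convention $(1,x_g,y_g)=(1,x,y)\cdot g^{-1}$, we get
\[
(1,x_1,y_1)=(1,x,y)\cdot g^{-1}\cdot \mathrm{diag}(1,\Pi^{-2},\Pi^{-3})=(1,x,y)\cdot T^{-1}.
\]
Equivalently, $(1,x,y)=(1,x_1,y_1)\cdot T$. The original coordinates $x,y$ are defined over $K$. Hence $\sigma\in\Gamma$, acting on functions through coefficients, fixes $x$ and $y$. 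Applying $\sigma$ to the identity above gives
\[
\sigma^*(1,x_1,y_1)=(1,x,y)\cdot \lexp{\sigma}{T}^{-1}=(1,x_1,y_1)\cdot T\cdot\lexp{\sigma}{T}^{-1}.
\]

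Second, we check that $M_\sigma:=T\cdot\lexp{\sigma}{T}^{-1}$ has integral entries whose reduction is the desired automorphism. Write
\[
\lexp{\sigma}{T}=\mathrm{diag}(1,\sigma(\Pi)^2,\sigma(\Pi)^3)\cdot\lexp{\sigma}{g}.
\]
Then
\[
M_\sigma=\mathrm{diag}(1,\Pi^2,\Pi^3)\cdot g\,\lexp{\sigma}{g}^{-1}\cdot\mathrm{diag}(1,\sigma(\Pi)^{-2},\sigma(\Pi)^{-3}).
\]
Two facts control this matrix.
\begin{itemize}
\item $g$ and $\lexp{\sigma}{g}$ are both perfect. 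This holds because $\sigma$ preserves $S_\et^{\rm adm}$ (its defining equations are over $K$ and $U_0^+$ is Galois stable), and because $t_{\max}$ is Galois invariant. Proposition \ref{prop:perfect} then gives $h:=g\,\lexp{\sigma}{g}^{-1}\in U_t$.
\item $u_\sigma:=\Pi/\sigma(\Pi)$ is a unit.
\end{itemize}
Conjugating $h$ by the diagonal matrix yields a matrix in the integral Weierstrass group shape:
\[
\begin{pmatrix}1&\Pi^{-2}\alpha_h\sigma(\Pi)^{2}\cdots\end{pmatrix}
\]
More precisely, the entries are $\alpha_h\sigma(\Pi)^{-2}$, $\beta_h\sigma(\Pi)^{-3}$, and $u_\sigma^{2}$ times $\gamma_h\sigma(\Pi)^{-1}$. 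These are all integral because $h\in U_t$, with $u=u_\sigma$ on the diagonal. Its reduction $\bar M_\sigma$ therefore lies in $W$.

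Third, we identify $\bar M_\sigma$ with $\phi(\sigma)$. The substitution $(1,x_1,y_1)\leftarrow(1,x_1,y_1)M_\sigma$ is integral. It transforms $F$ into $\lexp{\sigma}{F}$ up to a unit, since $\sigma$ fixes $f$ and scales $\Pi^{-6}$ by $u_\sigma^{6}$. So it defines an $\OO_{\tilde L}$-isomorphism between the charts of $\X$ and $\lexp{\sigma}{\X}$. This isomorphism extends the $\sigma$-semilinear action on the generic fiber. By the uniqueness in Proposition \ref{prop:monodromy}(i), this is the extended action. Reducing modulo $\m_{\tilde L}$ gives the induced action on $\Xb_1$. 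Since $\Xb_1$ is the closure of its affine part and $\bar M_\sigma\in W$ fixes the point at infinity $\tilde P$, Corollary \ref{cor:wnf_stabilizer} identifies $\phi(\sigma)$ with $\bar M_\sigma$.

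The main obstacle is the bookkeeping of the convention. We must verify that the semilinear action on functions composes correctly, so that $\phi$ is a homomorphism and not an anti-homomorphism, and that the result is $T\,\lexp{\sigma}{T}^{-1}$ rather than its inverse. I would check this directly with the cocycle identity
\[
M_{\sigma\tau}=M_\sigma\cdot\lexp{\sigma}{M_\tau},
\]
which holds because $\lexp{\sigma}{M_\tau}$ reduces to $\bar M_\tau$, as $\sigma$ acts trivially on $k$. This identity gives the homomorphism property after reduction.
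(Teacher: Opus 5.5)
Your proposal is correct and follows the paper's argument. The paper only notes that $T$ is the base-change matrix from the $K$-rational coordinates $(x,y)$ to the chart coordinates $(x_1,y_1)$ on $D_+(\tilde\Pi)$ and leaves the rest as a direct computation; your identity $\sigma^*(1,x_1,y_1)=(1,x_1,y_1)\,T\,\lexp{\sigma}{T}^{-1}$ is that computation written out, and your integrality check via Proposition \ref{prop:perfect} is the same input the paper uses right afterwards in Lemma \ref{lem:ker(phi)}.

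One harmless slip: the substitution $(1,x_1,y_1)\leftarrow(1,x_1,y_1)M_\sigma$ carries $\lexp{\sigma}{F}$ to $u_\sigma^{6}F$, rather than $F$ to $\lexp{\sigma}{F}$. This only fixes the direction of the chart isomorphism and does not affect your conclusion.
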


\begin{proof}
The matrix $T$ is the base change matrix from our original admissible coordinates $(x,y)$, which are defined over $K$, to the coordinates $(x_1,y_1)$ of the affine patch $D_+(\tilde{\Pi})$ of the blow-up $\X$ constructed in \S \ref{subsec:blow-up}. From there, the lemma follows from the definition of the monodromy action via a direct computation. 
\end{proof}

Let $\sigma\in\Gal(L/L_0)$, so that $\lexp{\sigma}{g}=g$. Then the lemma shows that
\[
   \bar{\phi}_0(\sigma) = \begin{pmatrix}
      1 &  & \\ & \chi(\sigma)^{-2} & \\ & & \chi(\sigma)^{-3}
   \end{pmatrix},
\]
where $\chi:\Gamma\to k^\times$ is the tame character defined by
\[
   \chi(\sigma) := \overline{\lexp{\sigma}{\Pi}/\Pi}.
\]
As $\bar{\phi}_0$ is injective, the restriction of $\chi$ to $\Gal(L/L_0)$ is injective, too. So $\chi$ has order $e$. It follows that $e$ is invertible in $k$. This proves Part (i) of the theorem. 

The remaining claims of the Theorem follow from the next lemma and Proposition \ref{prop:monodromy} (ii).

\begin{lemma} \label{lem:ker(phi)}
  We have $\ker(\phi)=\Gal(\tilde{L}/L)$. 
\end{lemma}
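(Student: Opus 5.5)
One inclusion is already available: \(\Gal(\tilde L/L)\subset\ker(\phi)\). Indeed, by \eqref{eq:ker_phi_0} the group \(\Gal(\tilde L/L)\) is the kernel of \(\phi_0\), so it acts trivially on \(\Xb_1\). Alternatively, if \(\sigma\) fixes \(L\) pointwise, then it fixes \(g\) and \(\Pi\), so \(T\cdot\lexp{\sigma}{T}^{-1}=1\) in Lemma \ref{lem:monodromy_explicit}.

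For the reverse inclusion, let \(\sigma\in\ker(\phi)\). The plan is to show first that \(\sigma\) fixes \(g\), so \(\sigma\in\Gal(\tilde L/L_0)\). Then \(\sigma\in\ker(\phi_0)=\Gal(\tilde L/L)\) by \eqref{eq:ker_phi_0}. To see that \(\sigma\) fixes \(g\), compute \(T\cdot\lexp{\sigma}{T}^{-1}\) explicitly. Write \(\lexp{\sigma}{g}=(\alpha',\beta',\gamma')\) and \(\lexp{\sigma}{\Pi}=c\Pi\), where \(c\in\OO_{\tilde L}^\times\) has residue class \(\chi(\sigma)\). Then \(T\cdot\lexp{\sigma}{T}^{-1}\) equals
\[
\begin{pmatrix}1&&\\&\Pi^2&\\&&\Pi^3\end{pmatrix}\, g\,(\lexp{\sigma}{g})^{-1}\begin{pmatrix}1&&\\&c^{-2}\Pi^{-2}&\\&&c^{-3}\Pi^{-3}\end{pmatrix}.
\]
Put \(h:=g\,(\lexp{\sigma}{g})^{-1}\), a unipotent matrix with entries \((\alpha_h,\beta_h,\gamma_h)\). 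Conjugating \(h\) by the diagonal matrix \(\mathrm{diag}(1,\Pi^2,\Pi^3)\) produces the entries \(\Pi^{-2}\alpha_h,\Pi^{-3}\beta_h,\Pi^{-1}\gamma_h\), and the remaining factor is \(\mathrm{diag}(1,c^{-2},c^{-3})\).

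The product is integral and reduces to an element of \(W\) by Lemma \ref{lem:monodromy_explicit}. Hence \(h\in U_t\), and \(\phi(\sigma)\) is the reduction of the rescaled \(h\) times \(\mathrm{diag}(1,\chi^{-2},\chi^{-3})\). If \(\phi(\sigma)=1\), comparing diagonal entries gives \(\chi(\sigma)^2=\chi(\sigma)^3=1\), hence \(\chi(\sigma)=1\). Comparing off-diagonal entries gives that \(\Pi^{-2}\alpha_h,\Pi^{-3}\beta_h,\Pi^{-1}\gamma_h\) lie in the maximal ideal, i.e.\ \(h\in U_t^+\).

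Next, \(\lexp{\sigma}{g}\) lies in \(S_\et^{\rm adm}\). This holds because \(S\), \(S_\et\) and \(U_0^+\) are defined over \(K\), so they are stable under Galois conjugation. Lemma \ref{lem:unique_hensel_lift}, applied with \(g_1=g\) (strongly rigidified) and \(g_2=\lexp{\sigma}{g}\), now gives \(g=\lexp{\sigma}{g}\). Hence \(\sigma\) fixes \(L_0=K(g)\), and the argument above finishes the reverse inclusion.

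The main obstacle is the bookkeeping in the matrix computation: keeping the conventions for \(\lexp{\sigma}{T}\) and for left versus right cosets consistent, so that the conjugated \(h\) really has the shape \(g\,(\lexp{\sigma}{g})^{-1}\) needed in Lemma \ref{lem:unique_hensel_lift}. A second point to check is that Lemma \ref{lem:unique_hensel_lift} may be applied over \(\tilde L\) rather than over an adapted field. I would handle this by passing to a finite extension containing \(\Pi\), over which \(K\) becomes adapted; the Hensel uniqueness argument is insensitive to this base change.
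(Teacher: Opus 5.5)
Your proposal is correct and follows essentially the same route as the paper: you set $h=g\,(\lexp{\sigma}{g})^{-1}$ and read off from the explicit form of $\phi(\sigma)$ that $\phi(\sigma)=1$ forces $h\in U_t^+$, then apply Lemma~\ref{lem:unique_hensel_lift} to get $\lexp{\sigma}{g}=g$ and conclude with \eqref{eq:ker_phi_0}. The differences are minor: the paper gets $h\in U_t$ from Proposition~\ref{prop:perfect}, since both $g$ and $\lexp{\sigma}{g}$ are perfect, whereas you use the integrality built into Lemma~\ref{lem:monodromy_explicit}; and you make explicit the check that $\lexp{\sigma}{g}\in S_\et^{\rm adm}$, which the paper leaves implicit.
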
	

\begin{proof}
Since the resolution $\X\to\X_{0,L}$ is defined over $L$, it is clear that $\Gal(\tilde{L}/L)$ is contained in $\ker(\phi)$. 

Fix $\sigma\in\Gamma$ and set $h:=g\cdot\lexp{\sigma}{g}^{-1}$. As $g$ and $\lexp{\sigma}{g}$ are both perfect, Proposition \ref{prop:perfect} implies that $h\in U_t$, i.e.\ we can write
\[
    h = \begin{pmatrix}
    	1 & \Pi^2\delta & \Pi^3\epsilon \\ 0 & 1 & \Pi\zeta \\ 0 & 0 & 1
    \end{pmatrix},
\]
with $\delta,\epsilon,\zeta\in\OO_{\tilde{L}}$. A direct calculation, using Lemma \ref{lem:monodromy_explicit} now shows that 
\begin{equation} \label{eq:phi(sigma)}
   \phi(\sigma) = \begin{pmatrix}
   	 1 & \chi(\sigma)^{-2} \bar{\delta} & \chi(\sigma)^{-3}\bar{\epsilon} \\
   	 0 & \chi(\sigma)^{-2} & \chi(\sigma)^{-3}\bar{\zeta} \\
   	 0 & 0 & \chi(\sigma)^{-3}
   \end{pmatrix}, 
\end{equation}
where $\bar{\delta},\bar{\epsilon},\bar{\zeta}\in k$ are the residue classes of $\delta,\epsilon,\zeta$.

Assume  that $\phi(\sigma)=1$. Then \eqref{eq:phi(sigma)} shows that $\bar{\delta}=\bar{\epsilon}=\bar{\zeta}=0$, i.e\ that $h\in U_t^+$. Since $g$ is assumed to be strongly rigidified, Lemma \ref{lem:unique_hensel_lift} applies, and shows that $\lexp{\sigma}{g}=g$. It follows that $\sigma|_{L_0}=1$ and hence, by \eqref{eq:ker_phi_0} that $\sigma\in\Gal(\tilde{L}/L)$. 

This completes the proof of Lemma \ref{lem:ker(phi)} and hence of Theorem \ref{thm:L_is_minimal_extension}.
\end{proof}

\section{Implementation} \label{sec:implementation}

\subsection{The routine \texttt{resolve\_cusp}} \label{subsec:resolve_cusp}

We have implemented an algorithm which explicitly computes a resolution 
$\X \to \X_0$ of a cusp $P \in \X_{0,s}$ in normal form, thus making Theorem~\ref{thm:cusp_resolution} effective. 
The implementation is part of the Sage package 
\href{https://github.com/kst3rn/StabilityFunction}{\tt StabilityFunction}
(\cite{KletusGitHub}) and contained in the submodule
\href{https://github.com/kst3rn/StabilityFunction/blob/main/semistable_model/curves/cusp_resolution.py}
{\tt cusp\_resolution}. 

Throughout the implementation we work over number fields endowed with a fixed 
$p$-adic valuation, rather than over $p$-adic fields directly. More precisely, 
we restrict to number fields $K$ for which the chosen $p$-adic valuation is 
unique, so that the completion $\widehat K$ coincides with the $p$-adic field 
relevant on the theoretical level. This allows us to perform all computations 
in an exact algebraic setting over number fields, while faithfully modeling 
the $p$-adic situation via completion. 

The theoretical results in the preceding sections were stated under the
simplifying assumption that the residue field is algebraically closed. In the
implementation the residue field of the chosen valuation on a number field is
of course finite. This causes no difficulty: all geometric assertions about
the cusp, the exceptional cubic and semistability are understood after base
change to an algebraic closure of the residue field. If necessary, one first
passes to a finite unramified extension in order to make the cusp and the
required tangent data rational. The subsequent ramified extensions are then
treated exactly as in the theoretical construction.

The central function is called {\tt resolve\_cusp}. It takes as input a 
homogeneous form $F \in K[z,x,y]$ of degree $\geq 3$ over a number field $K$ and a discrete 
valuation $v_K$ on $K$. 
Here $F$ is assumed to be integral and primitive with respect to $v_K$, so that 
it defines an integral plane model $\X_0$ of a smooth plane curve over $K$.\footnote{So that the dehomogenization $f:=F(1,x,y)$ defines the affine plane model used in this article.} 
It is further assumed that the point $P=(1:0:0)$ lies on the special fiber 
$\X_{0,s}$ and represents a cusp in normal form \eqref{eq:cusp_normal_form}.

The output of {\tt resolve\_cusp} is either a tuple $(v_L,t,E,e)$  or 
$(v_L,t,E,T)$, depending on the optional flag {\tt compute\_matrix}. 
Here $v_L$ denotes the unique extension of $v_K$ to a finite extension 
$L/K$, the field over which the rigidified coordinate change is defined, $t\in\QQ$ is the thickness of the resolution, and $E$ is the tail of the resolution (a plane cubic over the residue field in Weierstrass normal form). The integer \(e\) is the additional tame ramification degree needed to adjoin
an element \(\Pi\) with \(v(\Pi)=t\). Thus the actual resolution is defined over the tame extension \(L'/L\) of degree \(e\). If \texttt{compute\_matrix=True}, the function instead returns the explicit  base change matrix $T \in \GL_3(L')$, as in \eqref{eq:T_matrix}, realizing the transformation.

\medskip

Internally, the computation proceeds as follows. 
In order to obtain suitable coordinates, we must determine elements
$\alpha,\beta,\gamma \in \m_L$ such that the form $\lexp{g}{F}$, obtained from $F$ by the coordinate change with the unipotent matrix $g$, as in \eqref{eq:g_matrix}, satisfies Condition \ref{cond:coeffs} (b). We compute $\alpha,\beta,\gamma$ as {\em approximate solutions} of the system of equations \eqref{eq:A_equations} (the exact solutions satisfy the stronger Condition \ref{cond:coeffs} (a)). In other words, we have to find a closed point on $\AA_K^3$ which is $v_K$-adically sufficiently close to a point on $S^{\rm adm}=S^\rig\cap U_0^+$.
Finding such a solution is delegated to the submodule \href{https://github.com/swewers/StabilityFunction/blob/main/semistable_model/curves/approximate_solutions.py}{\tt approximate\_solutions}.

\medskip
Let $J \subset K[\alpha,\beta,\gamma]$ be the ideal defined by the
rigidifying equations \eqref{eq:A_equations}. In computations it is important
not to work with all of $S=\Spec(K[\alpha,\beta,\gamma]/J)$, since $S$ may
contain non-reduced or positive-dimensional components which are irrelevant
for the desired coordinate change. But Lemma \ref{lem:S_et^adm} allows us to replace $S$ by its \'etale part $S_\et$. Concretely, if $\Delta$ denotes the Jacobian determinant of the
three equations \eqref{eq:A_equations}, we replace $J$ by the saturation
\[
J_{\rm et}=J:\Delta^\infty .
\]
This keeps precisely the part of $S$ on which the Jacobian
has full rank and where $S$ is therefore \'etale over $K$. In the situation relevant for the algorithm this yields a reduced
zero-dimensional algebra over $K$.  

After a generic linear change of the variables $\alpha,\beta,\gamma$, the ideal
$J_{\rm et}$ is put into so-called {\em shape position} (\cite{shape_lemma}). Thus the system is represented in the
form
\[
f(\alpha)=0,\qquad \beta=r(\alpha),\qquad \gamma=s(\alpha),
\]
with $f,r,s\in K[\alpha]$. The problem is then reduced to finding a root
$\alpha$ of $f$ in a finite valued extension $L/K$ such that
\[
\alpha,\quad r(\alpha),\quad s(\alpha)\in \mathfrak m_L .
\]
This is done by computing $p$-adic approximations to the irreducible factors of
$f$ over the completion $\widehat K$, using MacLane valuations as implemented
in Sage. Among the resulting branches we choose one satisfying the above
valuation conditions; the existence of such a branch is guaranteed by
Lemma \ref{lem:S_et^adm}.

Finally, the field may be enlarged further in order to contain an element
$\Pi$ with prescribed valuation $v_L(\Pi)=t$. From this data the weighted
blow-up, the exceptional Weierstrass cubic, and, if requested, the base-change
matrix are computed explicitly. The current implementation is designed to
produce a field over which the resolution exists. It does not yet systematically
minimize this field extension, although this could be done by picking those approximate solutions corresponding to a strongly rigidified solution, see Definition \ref{def:strongly_rigidified} and Theorem \ref{thm:L_is_minimal_extension}. In practice, almost all the solutions found by our implementation have this property.

\subsection{Use in stable reduction of plane quartics}
\label{subsec:quartic_reduction}

The cusp-resolution algorithm is one of the local ingredients in the algorithm
for stable reduction of smooth plane quartics described in \cite{SSW}. Starting
from a smooth plane quartic $X/K$, one first computes, after a finite extension
of $K$, a GIT-semistable plane model
\[
\X_0 \subset \PP^2_{\mathcal O_K}.
\]
If $\X_{0,s}$ is strictly GIT-semistable, then the stable reduction is
hyperelliptic and the method does not continue. If $\X_{0,s}$ is GIT-stable,
then its singularities are at worst nodes and cusps. The nodes already occur in
the stable special fiber. Each cusp, however, corresponds to a $1$-tail of the
stable reduction, and the stable model is obtained from $\X_0$ by the local
modifications described in this paper.

Concretely, after putting each cusp of $\X_{0,s}$ into normal form, the global
algorithm calls \texttt{resolve\_cusp}. The routine computes the required local
field extension, the thickness of the attaching node, and the Weierstrass cubic
which becomes the exceptional component. These local resolutions are then
performed independently at the cusps and combined over a common finite
extension of the ground field. In this way the local construction of
Theorem~\ref{thm:cusp_resolution} supplies exactly the missing step between a
GIT-stable plane model and the stable model of the quartic. Examples and Sage
transcripts are given in \cite{SSW}.

\vspace{2ex}\noindent
{\bf Data and code availability statement:}
No datasets were generated or analysed in this article. The implementation of
the cusp-resolution algorithm described in Section~\ref{sec:implementation}
is available in the SageMath package \texttt{StabilityFunction} at
\url{https://github.com/kst3rn/StabilityFunction}. The relevant routine is
\texttt{resolve\_cusp} in the cusp-resolution module. 

\vspace{2ex}\noindent
{\bf Declaration of generative AI and AI-assisted technologies:}
During the preparation of this manuscript, the author used ChatGPT (OpenAI) to assist with language editing, restructuring
of explanatory passages, LaTeX drafting, and drafting or debugging auxiliary
Sage/Python code. All AI-assisted output was reviewed, edited, and independently
checked by the author. The author take full responsibility for the mathematical
content, proofs, computations, code, and conclusions of the article.

\vspace{2ex}\noindent
{\bf Conflict of interest statement:}
The author declares that he has no conflict of interest.

\bibliographystyle{plain}
\bibliography{literature}

\end{document}